\documentclass[a4paper, 11pt, reqno]{amsart}
\usepackage{amsfonts}
\usepackage{mathrsfs}
\usepackage{amsmath, amssymb, mathrsfs, amsthm, cases, verbatim}
\usepackage{pgflibraryarrows}
\usepackage{graphicx}
\usepackage[active]{srcltx}
\usepackage[colorlinks]{hyperref}
\usepackage{hypernat}

\usepackage{tikz}

\setlength{\topmargin}{-0.2in}
\setlength{\oddsidemargin}{0.3in}
\setlength{\evensidemargin}{0.3in}
\setlength{\textwidth}{5.9in}
\setlength{\rightmargin}{0.7in}
\setlength{\leftmargin}{-0.3in}
\setlength{\textheight}{9.1in}

\newtheorem{thm}{Theorem}[section]
\newtheorem{lem}[thm]{Lemma}
\newtheorem{prop}[thm]{Proposition}
\newtheorem{cor}[thm]{Corollary}

\theoremstyle{remark}
\newtheorem{rmk}{Remark}[section]

\numberwithin{equation}{section}

\theoremstyle{definition}
\newtheorem{defi}{Definition}[section]

\makeatletter
\newcommand{\rmnum}[1]{\romannumeral #1}
\newcommand{\Rmnum}[1]{\expandafter\@slowromancap\romannumeral #1@}
\makeatother

\newcommand{\abs}[1]{\lvert#1\rvert}

\newcommand{\dive}{\operatorname{div}}
\newcommand{\tr}{\operatorname{tr}}

\newcommand{\ol}{\overline}
\newcommand{\ul}{\underline}
\newcommand{\cd}{\cdot}
\newcommand{\ej}{{\varepsilon_j}}
\newcommand{\ppx}{{\partial\over \partial {p_1}}}
\newcommand{\ppy}{{\partial\over \partial {p_2}}}
\newcommand{\ppz}{{\partial\over \partial {p_3}}}
\newcommand{\pqx}{{\partial\over \partial {q_1}}}
\newcommand{\pqy}{{\partial\over \partial {q_2}}}
\newcommand{\pqz}{{\partial\over \partial {q_3}}}

\DeclareMathOperator*{\limsups}{limsup^\ast}
\DeclareMathOperator*{\liminfs}{liminf_\ast}

\def\vint_#1{\mathchoice%
          {\mathop{\kern 0.2em\vrule width 0.6em height 0.69678ex depth -0.58065ex
                  \kern -0.8em \intop}\nolimits_{\kern -0.4em#1}}%
          {\mathop{\kern 0.1em\vrule width 0.5em height 0.69678ex depth -0.60387ex
                  \kern -0.6em \intop}\nolimits_{#1}}%
          {\mathop{\kern 0.1em\vrule width 0.5em height 0.69678ex
              depth -0.60387ex
                  \kern -0.6em \intop}\nolimits_{#1}}%
          {\mathop{\kern 0.1em\vrule width 0.5em height 0.69678ex depth -0.60387ex
                  \kern -0.6em \intop}\nolimits_{#1}}}
\def\vintslides_#1{\mathchoice%
          {\mathop{\kern 0.1em\vrule width 0.5em height 0.697ex depth -0.581ex
                  \kern -0.6em \intop}\nolimits_{\kern -0.4em#1}}%
          {\mathop{\kern 0.1em\vrule width 0.3em height 0.697ex depth -0.604ex
                  \kern -0.4em \intop}\nolimits_{#1}}%
          {\mathop{\kern 0.1em\vrule width 0.3em height 0.697ex depth -0.604ex
                  \kern -0.4em \intop}\nolimits_{#1}}%
          {\mathop{\kern 0.1em\vrule width 0.3em height 0.697ex depth -0.604ex
                  \kern -0.4em \intop}\nolimits_{#1}}}

\newcommand{\kintint}[2]{\mathchoice%
          {\mathop{\kern 0.2em\vrule width 0.6em height 0.69678ex depth -0.58065ex
                  \kern -0.8em \intop}\nolimits_{\kern -0.45em#1}^{#2}}%
          {\mathop{\kern 0.1em\vrule width 0.5em height 0.69678ex depth -0.60387ex
                  \kern -0.6em \intop}\nolimits_{#1}^{#2}}%
          {\mathop{\kern 0.1em\vrule width 0.5em height 0.69678ex depth -0.60387ex
                  \kern -0.6em \intop}\nolimits_{#1}^{#2}}%
          {\mathop{\kern 0.1em\vrule width 0.5em height 0.69678ex depth -0.60387ex
                  \kern -0.6em \intop}\nolimits_{#1}^{#2}}}

\begin{document}
\title[Horizontal Mean Curvature Flow in the Heisenberg Group]{On the horizontal Mean Curvature Flow for Axisymmetric surfaces in the Heisenberg Group}
\author[F. Ferrari]{Fausto Ferrari}
\address{Dipartimento di Matematica dell'Universit\`a di Bologna, Piazza di Porta S. Donato, 5, 40126, Bologna, Italy}
\email{fausto.ferrari@unibo.it}
\author[Q. Liu]{Qing Liu}
\address{Department of Mathematics,
University of Pittsburgh,
Pittsburgh, PA 15260, USA}
\email{qingliu@pitt.edu}
\author[J. J. Manfredi]{Juan J. Manfredi}
\address{Department of Mathematics,
University of Pittsburgh,
Pittsburgh, PA 15260, USA}
\email{manfredi@pitt.edu}

\date{\today}

\thanks{F.~F.~is supported by  MURST, Italy, by University of Bologna,Italy
by EC project CG-DICE and by the ERC starting grant project 2011 EPSILON (Elliptic PDEs and Symmetry of Interfaces and Layers for Odd Nonlinearities). F.~F.~\ wishes to thank the Department of Mathematics at the University of Pittsburgh for the  kind hospitality.\\ \indent
Q.~L.~and J.~M.~are supported by NSF award DMS-1001179}

\keywords{Mean curvature flow equation, Heisenberg groups, viscosity solutions, level-set method}

\subjclass[2010]{35K93, 35R03, 35D40}

\maketitle

\begin{abstract}
We study the horizontal mean curvature flow in the Heisenberg group by using the level-set method. We prove the uniqueness, existence and stability of  axisymmetric viscosity solutions of the level-set equation. An explicit solution is given for the motion starting from a subelliptic sphere. We also give several properties of the level-set method and the mean curvature flow in the Heisenberg group. 
\end{abstract}

\section{Introduction}
We are interested in a family of compact hypersurfaces $\{\Gamma_t\}_{t\geq 0}$ in the \textit{Heisenberg group} parametrized by time $t\geq 0$. The motion of the hypersurfaces is governed by the following law:
\begin{equation}\label{original mcf}
V_H=\kappa_H,
\end{equation}
where $V_H$ denotes its \textit{horizontal normal velocity} and $\kappa_H$ stands for the \textit{horizontal mean curvature} in the Heisenberg group. The geometric motion (\ref{original mcf}) is thus called \textit{horizontal mean curvature flow}. The objective of this work is to investigate the evolution of the surface $\Gamma_t$ for  $t>0$ for a general class of initial surface $\Gamma_0$.

We implement a version of the level-set method adapted to the Heisenberg group. Let us assume, for the moment, that $\Gamma_t$ is smooth for any $t\geq 0$. If there exists $u\in C^2(\mathcal{H}\times [0, \infty))$ such that 
\[
\Gamma_t=\left\{p\in \mathcal{H}: u(p, t)=0\right \} 
\] 
for $t\ge 0$, 
then one may represent the horizontal normal velocity $V_H$ as 
\[
V_H={u_t\over |\nabla_H u|}
\]
and the horizontal mean curvature $\kappa_H$ as
\[
\kappa_H=\dive_H\left(\frac{\nabla_Hu}{|\nabla_H u|}\right)=\frac{1}{|\nabla_H u|}\tr\left[\bigg(I-\frac{\nabla_H u\otimes \nabla_H u}{|\nabla_H u|^2}\bigg)(\nabla_H^2 u)^\ast\right].
\]
Here $u_t$, $\nabla_H u$ and $(\nabla_H^2 u)^\ast$ respectively denote the derivative in $t$, the \textit{horizontal gradient} and the \textit{(symmetrized) horizontal Hessian} of $u$, and $\dive_H$ is the \textit{horizontal divergence operator}.  The horizontal gradient  of $u$ is given by $\nabla_H u=(X_1 u, X_2 u)$, where
\[
\begin{aligned}
& X_1=\frac{\partial}{\partial p_{1}}-\frac{p_{2}}{2}\frac{\partial}{\partial p_{3}};\\
& X_2=\frac{\partial}{\partial p_{2}}+\frac{p_{1}}{2}\frac{\partial}{\partial p_{3}}.\\
\end{aligned}
\]

In order to understand the law of motion by curvature (\ref{original mcf}), it therefore suffices to solve 
\begin{numcases}{(\textrm{MCF})\ }
u_t-\tr\left[\bigg(I-\frac{\nabla_H u\otimes \nabla_H u}{|\nabla_H u|^2}\bigg)(\nabla_H^2 u)^\ast\right]=0 &\text{in $\mathcal{H}\times (0, \infty)$,}\quad\label{mcf1} \\
u(p, 0)=u_0(p)  &\text{in $\mathcal{H}$.}\label{mcf2}
\end{numcases}
with a given function $u_0\in C(\mathcal{H})$ satisfying 
\[
\Gamma_0=\{p\in \mathcal{H}: u_0(p)=0\}.
\]
We refer the reader to \cite{CGG, ES, G1} for a detailed derivation of (MCF) in the Euclidean spaces and to \cite{CDPT, CC} for the analogue in the Heisenberg group. 

In this work, we aim to establish the uniqueness, existence and stability of the solutions of (MCF) that are spatially axisymmetric about the third coordinate axis. Namely, we are interested in the solutions $u$ satisfying 
\begin{equation}\label{axis}
u(p_1, p_2, p_3, t)=u(p_1', p_2', p_3, t) \text{ when $(p_1')^2+(p_2')^2=p_1^2+p_2^2$.}
\end{equation}
The symmetric structure of the functions is useful to obtain  positive results. We thus consider our contribution as a first step in order to prove more general results. Consult \cite{AAG, SS} for the results on motion by mean curvature for axisymmetric surfaces in the Euclidean spaces.

The symmetry with respect to the third axis in the Heisenberg group is not accidental. Indeed it is well known that this coordinate plays a key role in the Heisenberg group in several cases. In particular, we recall, for example, that $\{(0,0,p_3)\in\mathcal{H}:\:p_3\in \mathbb{R}\}$ is the center of the Heisenberg group and moreover the points along the $p_3$-axis correspond to conjugate points for the exponential map \cite{Montgomery}.  We warn the reader that, in general, our results do not apply to functions with different axes of symmetry. 

Hereafter the property (\ref{axis}) is sometimes referred to as ``spatial symmetry about the vertical axis'' or simply as ``axisymmetric''. 

Since the general regularity of $u$ is not known \textit{a priori},  we discuss the problem in the framework of viscosity solutions \cite{CIL}. As it is easily observed from the equation, a key difficulty lies at the \textit{characteristic set} of the level set $\Gamma_{t}$, i.e., at the points  where $\nabla_H u=0$. 

\subsection{Uniqueness} Even in the Euclidean case \cite{CGG, ES, S, G1}, the proof of the \textit{comparison principle} and the uniqueness of solutions for this type of degenerate equations need special techniques to deal with the characteristic set. The comparison principle we expect is as follows: for any \textit{upper semicontinuous subsolution} $u$ and \textit{lower semicontinuous supersolution} $v$ defined on $\mathcal{H}\times[0, \infty)$ satisfying $u(p, 0)\leq v(p, 0)$ for all $p\in \mathcal{H}$, we have $u(p, t)\leq v(p, t)$ for any $t\geq 0$.
L. Capogna and G. Citti \cite{CC} extended the results of \cite{ES} and proved a comparison principle by excluding the characteristic points. Their comparison principle further required that (\rmnum{1}) either $u$ or $v$ be uniformly continuous and (\rmnum{2}) the initial surfaces are completely separated in the horizontal directions, i.e., $u(p, 0)\leq v(q, 0)$ for all $p=(p_1, p_2, p_3), q=(q_1, q_2, q_3)\in \mathcal{H}$ such that $p_i=q_i$ for $i=1, 2$. The general comparison principle, as stated above, remains an open question. 

In this paper, we follow \cite{CGG, G1} and give a comparison principle without assuming those two conditions above but requiring that either $u$ or $v$ be axisymmetric. We also restrict ourselves to the case of compact surfaces for simplicity. The comparison theorem we present is as follows.
\begin{thm}[Comparison theorem]\label{comparison theorem}
Let $u$ and $v$ be respectively an upper semicontinuous subsolution and a lower semicontinuous supersolution of 
\[
u_t-\tr\left[\bigg(I-\frac{\nabla_H u\otimes \nabla_H u}{|\nabla_H u|^2}\bigg)(\nabla_H^2 u)^\ast\right]=0
\]
in $\mathcal{H}\times (0, T)$ for any $T>0$. Assume that there is a compact set $K\subset \mathcal{H}$ and $a, b\in \mathbb{R}$ with $a\leq b$ such that $u(p, t)=a$ and $v(p, t)=b$ for all $p\in \mathcal{H}\setminus K$ and $t\in [0, T]$. Assume that either $u$ or $v$ is spatially axisymmetric about the vertical axis. If $u(p, 0)\leq v(p, 0)$ for all $p\in \mathcal{H}$, then $u\leq v$ for all $(x, t)\in \mathcal{H}\times [0, T]$. 
\end{thm}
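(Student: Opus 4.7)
The plan is to run the Crandall-Ishii-Lions doubling-variables argument, using the axisymmetry to overcome the principal sub-Riemannian difficulty at the characteristic set. Without loss of generality assume $u$ is the axisymmetric one. Since the (MCF) operator is invariant under rotations $R_\theta$ about the vertical axis, the symmetrization
\[
\tilde v(p,t):=\inf_{\theta\in[0,2\pi)} v(R_\theta p,t)
\]
is an axisymmetric lower semicontinuous supersolution with $\tilde v\leq v$ (as an infimum over a compact family of supersolutions), and axisymmetry of $u$ upgrades $u(\cdot,0)\leq v(\cdot,0)$ to $u(\cdot,0)\leq \tilde v(\cdot,0)$. It therefore suffices to prove $u\leq \tilde v$, so one may assume both functions are axisymmetric. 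A further standard reduction replaces $u$ by $u-\sigma/(T-t)$ for small $\sigma>0$, turning $u$ into a strict subsolution, with $\sigma\to 0^+$ at the end.

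\textbf{Doubling and angular alignment.} Suppose for contradiction that $\sup(u-\tilde v)>0$; by the compact support hypothesis this positive supremum is attained in $K\times[0,T)$. Consider the standard doubled functional
\[
\Phi_\varepsilon(p,q,t,s)=u(p,t)-\tilde v(q,s)-\frac{1}{4\varepsilon}|p-q|^4-\frac{(t-s)^2}{2\varepsilon}
\]
and let $(\hat p,\hat q,\hat t,\hat s)$ denote a maximizer, which remains in a fixed compact set for $\varepsilon$ small. The key use of axisymmetry is an alignment step: since $\tilde v(R_\theta\hat q,\hat s)=\tilde v(\hat q,\hat s)$ for every $\theta$, the maximality of $\Phi_\varepsilon$ forces $|\hat p-R_\theta\hat q|^4$ to be minimized at $\theta=0$; equivalently, after a joint rotation (which leaves $u,\tilde v$ invariant), one may choose both $\hat p$ and $\hat q$ to lie in the half-plane $\{p_2=0,\ p_1\geq 0\}$. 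At such aligned points the horizontal vector fields reduce to $X_1=\partial_{p_1}$ and $X_2=\partial_{p_2}+(p_1/2)\partial_{p_3}$, which considerably simplifies the analysis below.

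\textbf{Closing argument and main obstacle.} Apply the parabolic Crandall-Ishii lemma to obtain symmetric matrices $X,Y$ satisfying the usual two-sided inequality in terms of the Hessian of the spatial penalty, and substitute into the viscosity inequalities for $u$ and $\tilde v$. When the horizontal gradient of the test function is nonzero at both $\hat p$ and $\hat q$, the standard manipulations combining the matrix inequality with the degenerate ellipticity of the MCF operator (rank-one invariance along the normal direction) yield a contradiction after sending $\varepsilon\to 0$. The main obstacle, and the reason the statement is subtle even after alignment, is that the horizontal gradient of the Euclidean penalty at $\hat p$ can vanish even when $\hat p\neq \hat q$: a short computation in the aligned configuration shows this happens precisely when $\hat r_p=\hat r_q$ and either $\hat r_p=0$ or $\hat p_3=\hat q_3$, where $\hat r=\sqrt{\hat p_1^2+\hat p_2^2}$. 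In such a characteristic-set scenario one invokes the quartic growth of the penalty to conclude that the horizontal Hessian of the test function also vanishes along the relevant directions, so the curvature term drops out and the viscosity inequalities collapse to $\partial_t u(\hat p,\hat t)\leq 0$ and $\partial_t\tilde v(\hat q,\hat s)\geq 0$. Combined with the strict time derivative coming from the $\sigma/(T-t)$ perturbation and the time penalty, this yields the required contradiction and completes the proof.
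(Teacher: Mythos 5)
The outline (symmetrize $v$, reduce to $u-\sigma/(T-t)$, double variables, align angularly, Crandall--Ishii) is reasonable, and the rotational symmetrization $\tilde v=\inf_\theta v(R_\theta\,\cdot,\,\cdot)$ is a legitimate trick modulo verifying rotation invariance of the operator and the infimum-of-supersolutions property. However, your choice of penalty function breaks the argument at exactly the step where all of the Heisenberg-specific difficulty lives, and this is a genuine gap that the hand-waving about ``quartic growth'' does not fill.

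First, a Euclidean penalty $\Phi(p,q)=|p-q|^4$ does not satisfy horizontal gradient alignment. Although $\partial_p\Phi=-\partial_q\Phi$ in the Euclidean sense, the horizontal fields $X_1,X_2$ have $p$-dependent coefficients, so
\[
X_2^p\Phi+X_2^q\Phi=\frac{p_1-q_1}{2}\,\partial_{p_3}\Phi,
\]
which does not vanish even after your angular alignment $p_2=q_2=0$. Consequently the vector $\eta_u=\nabla_H^p\big(\tfrac{1}{4\varepsilon}\Phi\big)$ fed into the subsolution inequality and the vector $\eta_v=-\nabla_H^q\big(\tfrac{1}{4\varepsilon}\Phi\big)$ fed into the supersolution inequality are not equal, and their difference is $O(1)$, not $o(1)$; the usual cancellation of the curvature terms does not go through. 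The paper identifies this precisely as the failure of property~(a) for the ``wrong'' gauge, and it is why they use $g(p,q)=|p\cdot q^{-1}|^4$, for which $\nabla_H^pg=-\nabla_H^qg$ holds identically (Remark~\ref{oppo grad}).

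Second, and more fatally, your claim that ``the horizontal Hessian of the test function also vanishes along the relevant directions'' in the characteristic scenario is false for the quartic Euclidean penalty. In the aligned configuration the only nontrivial characteristic case is $p_1=q_1=p_2=q_2=0$ with $p_3\neq q_3$. Writing $\rho=|p-q|^2$, one computes $(\nabla_H^{2,p}\rho^2)^\ast=4(p_3-q_3)^2\,I$ there, which is \emph{nonzero} whenever $p_3\neq q_3$. After dividing by $\varepsilon$, this contributes $(p_3-q_3)^2/\varepsilon$ to the viscosity inequalities, which (by the a priori bound $|p-q|^4=O(\varepsilon)$, whether $|\cdot|$ is Euclidean or Kor\'anyi) is at best $O(1)$ and does not tend to zero; the curvature term therefore does not drop out, and the contradiction via the $\sigma/(T-t)$ perturbation cannot be reached. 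This is the exact obstruction the paper resolves by using $g^2=|p\cdot q^{-1}|^8$ rather than a quartic: with $g^2$ the second horizontal derivatives factor through $g$ and $\nabla_Hg$ via the product rule (equation~(\ref{sec deri-gg})), so they genuinely vanish when the first do, and the axisymmetry of $u$ is then used through Lemma~\ref{lem axis-test} to force $p_1^{\varepsilon}=p_2^{\varepsilon}=q_1^{\varepsilon}=q_2^{\varepsilon}=0$ in that case, after which the structure (\ref{sec deri-g1})--(\ref{sec deri-g5}) gives the vanishing. You would need to replace your penalty with an eighth power of the left-invariant gauge (or argue some other way that the second-order term is $o(1)$), and this is not a cosmetic change but the heart of the proof.
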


The uniqueness of the axisymmetric solutions follows immediately from the theorem above. It is worth remarking that when showing comparison principles involving viscosity solutions, one usually needs to double the variables and maximize 
\[
u(p, t)-v(q, s)-{\phi(p, q)+|t-s|^2\over \varepsilon},
\]
where $\varepsilon>0$, $p, q\in \mathcal{H}$, $t, s\in [0, \infty)$ and $\phi$ is a smooth penalty function on $\mathcal{H}\times \mathcal{H}$, and argues by contradiction.



The typical choice of $\phi$ in the Euclidean spaces, as discussed in \cite{CIL} and \cite{G1}, is a quadratic function $\phi(x, y)=|x-y|^2$ usually or a quartic function $\phi(x, y)=|x-y|^4$ for mean curvature flow equation (for $x, y\in \mathbb{R}^n$). The advantages of these choices are:
\begin{enumerate}
\item[(a)] The derivatives of $\phi$ with respect to $x$ and $y$ are opposite, i.e., $\nabla_x\phi=-\nabla_y\phi$. We would plug these derivatives in the viscosity inequalities, since they serve as semi-differentials for the compared functions. This construction enables us to derive a contradiction.
\item[(b)] When discussing (mild) singular equations such as curvature flow equations, it will be convenient to have the second derivatives be $0$ whenever the first derivatives are $0$,
as in the case of  $|x-y|^4$.
\end{enumerate}

The analogue of the choice $|x-y|^4$ is not immediate in the  Heisenberg group. Since the group multiplication is not commutative, the two natural options  $f(p, q)=|q^{-1}\cd p|^4$  and $g(p, q)=|p\cd q^{-1}|^4$ are different.  It seems that we have more options but it turns out that neither of them satisfies both conditions above. By direct calculation, we may find that $g$ fulfills the requirement (a) above but its derivatives do not satisfy (b). The function $f$ is good for our requirement (b) but unfortunately fails to have the property (a). Hence, the main difficulty of the uniqueness argument in the Heisenberg group consists in a wise choice of the penalty function $\phi$.

Our approach combines both choices $f$ and $g$. On one hand, we use $f$ to derive a relaxed definition (Definition \ref{trad defi}) of solutions of (\ref{mcf}), facilitating us to overcome the singularity.
On the other hand, under the help of axial symmetry, we obtain the property (b) when employing $g$ type of penalty functions in the proof of the comparison principle. The symmetry plays an important role since it largely simplifies the structure of characteristic points; see \cite{FLM2} for some geometric details.

Roughly speaking, when  a smooth function $u(p, t)$ is spatially symmetric about the vertical axis, i.e., $u=u(r, p_3, t)$, where $r=(p_1^2+p_2^2)^{1/2}$, we get
\[
\begin{aligned}
&X_1u={p_1\over r}\frac{\partial}{\partial r}u-{p_2\over 2}\frac{\partial}{\partial p_3}u;\\
&X_2u={p_2\over r}\frac{\partial}{\partial r}u+{p_1\over 2}\frac{\partial}{\partial p_3}u.
\end{aligned}
\]
Then $\nabla_H u(p, t)=0$ implies that either $\partial u/\partial r=\partial u/ \partial p_3=0$ or $p_1^2+p_2^2=0$. This observation enables us to obtain property (b) for a large power of the function $g$. 

Our definition of viscosity solutions is actually an extension of that introduced in \cite{CGG, G1} to the Heisenberg group. In Section \ref{sec defi}, we discuss the equivalence of this definition and the others.

\subsection{Existence} Generally speaking, there are at least three possible approaches to get the existence of solutions of (MCF). One may follow \cite{ES} to use the uniformly parabolic theory by considering a regularized equation
\begin{numcases}{}
u_t-\tr\left[\bigg(I-\frac{\nabla_H u\otimes \nabla_H u}{|\nabla_H u|^2+\varepsilon^2}\bigg)(\nabla_H^2 u)^\ast\right]=0 &\text{in $\mathcal{H}\times (0, \infty)$,}\nonumber \\
u(p, 0)=u_0(p)  &\text{in $\mathcal{H}$}\nonumber.
\end{numcases}
and take the limit of its solution as $\varepsilon\to 0$; see \cite{CC} for results in the \textit{Carnot groups} with this method.  Another possible option is to employ \textit{Perron's method} by considering the supremum of all subsolutions or the infimum of all supersolutions, as is shown in \cite{CGG, G1} for the Euclidean case. We refer to \cite{I, CIL} for a general introduction of this method in the framework of viscosity solutions. 

A third method for existence is based on the representation theorem involving optimal control or game theory, which recently generated a spur of activity. Consult the works \cite{CSTV, KS1, KS2, MPR1, MPR2, PSSW, PS, ST} for the development of this approach to various equations in  Euclidean spaces. For the mean curvature flow in the sub-Riemannian geometry, a stochastic control-based formulation  analogous to \cite{ST} is addressed in \cite{DDR}, where  the authors found a solution via a suitable optimal stochastic control problem.

In this work, we adapt the deterministic game-theoretic approach of R. V. Kohn and S. Serfaty \cite{KS1} to the Heisenberg group. For any given axisymmetric continuous function $u_0$, we set up a family of games, whose \textit{value functions} $u^\varepsilon$ converge to the solution $u$ to the mean curvature flow equation. We not only get the existence of solutions but also obtain a game interpretation of the equation in the Heisenberg group. The proof is based on the \textit{dynamic programming principle}, which can be regarded as  a (nonlinear) semigroup. Our convergence theorem relies on the comparison principle given in Theorem \ref{comparison theorem}. More precisely, taking the \textit{half relaxed limits}, defined on $\mathcal{H}\times [0, \infty)$,
\begin{equation}\label{upper limit}
  \begin{aligned}
  \overline{u}(p, t):&= {\limsups_{\varepsilon\to 0}} u^\varepsilon(p, t)\\
  &=\lim_{\delta\to 0}\sup\{u^\varepsilon(q, s): s\geq 0, 0<\varepsilon<\delta,\
  \abs{p-q}+\abs{t-s}<\delta\}\\
  \end{aligned}
\end{equation}
  and
\begin{equation}\label{lower limit}
  \begin{aligned}
  \underline{u}(p, t):&=\liminfs_{\varepsilon\to 0} u^\varepsilon(p, t)\\
  &=\lim_{\delta\to 0}\inf\{u^\varepsilon(q, s): s\geq 0, 0<\varepsilon<\delta,\
  \abs{p-q}+\abs{t-s}<\delta\},
  \end{aligned}
\end{equation}
we show that $\ol{u}$ and $\ul{u}$ are respectively a subsolution and a supersolution of (\ref{mcf1}) using the dynamic programming principle. We also show that $\ol{u}(p, 0)\leq u_0(p)\leq \ul{u}(p, 0)$ and that $u^\varepsilon, \ol{u}$ and $\ul{u}$ are spatially axisymmetric about the vertical axis. Our game approximation then follows immediately from the comparison theorem. See Section \ref{sec existence} for more details on the game setting and the existence theorem. 

We discuss asymptotic mean value properties related to random \textit{tug-of-war games} for $p$-harmonic functions on the Heisenberg group in \cite{FLM}.

\subsection{Stability and uniqueness of the evolution}
We give a stability theorem, which is used to show that the equation (\ref{mcf1}) is invariant under the change of dependent variable. We prove that for any continuous function $\theta: \mathbb{R}\to \mathbb{R}$, the composition $\theta\circ u$ is a solution provided that $u$ is a solution. Note that this is clear if $\theta$ is smooth and strictly monotone, since the mean curvature flow equation is \textit{geometric} and \textit{orientation-free}; see \cite{G1} for more explanation. Our stability result is applied so as to weaken the regularity of $\theta$.

It follows from the invariance property that any axisymmetric evolution $\Gamma_t$ does not depend on the particular choice of $u_0$ but depends on $\Gamma_0$ only, which is important for the level-set method.

\subsection{Evolution of spheres}
Our uniqueness and existence results enable us to discuss motion by mean curvature with a variety of initial hypersurfaces including spheres, tori and other compact surfaces. We are particularly interested in the motion of a subelliptic sphere. It turns out that if $u_0$ is a defining function of the sphere centered at $0$ with radius $r$, say
\[
u_0(p)=\min\{(p_1^2+p_2^2)^2+16p_3^2-r^4, M\}
\]
with $p=(p_1, p_2, p_3)\in \mathcal{H}$ and $M>0$ large, then the unique solution of (MCF) is
\[
u(p, t)=\min\{(p_1^2+p_2^2)^2+12 t(p_1^2+p_2^2)+16p_3^2+12t^2-r^4, M\}
\]
for any $t\geq 0$. We need to truncate the initial function and the solution by a constant $M$ because all of our wellposedness results are for solutions that are constant outside a compact set. It is obvious that the zero level set $\Gamma_t$ of $u$ vanishes after time $t=r^2/\sqrt{12}$, which, by Theorem \ref{comparison theorem}, indicates that all compact surfaces under the motion by horizontal mean curvature disappear in finite time. 

To understand the \textit{asymptotic profile} at the \textit{extinction time}, we normalize the evolution $\Gamma_t$ initialized from the sphere and find that the normalized surface $\Gamma_t/\sqrt{r^4-12t^2}$ converges to an ellipsoid given by the following equation:
\[
\sqrt{12}r^2(p_1^2+p_2^2)+16p_3^2=1.
\]
The asymptotic profile above depends on $r$, the size of the initial surface, which is quite different from the Euclidean case. 

The paper is organized in the following way. We present an introduction in Section \ref{sec prelim} about the Heisenberg group including calculations of some particular functions we will use later. In Section \ref{sec defi}, we discuss various kinds of definitions of solution to (\ref{mcf1}). We propose a new definition and show its equivalence with the others. An explicit solution related to the evolution of a subelliptic sphere is given at the end of this section. The comparison principle, Theorem \ref{comparison theorem}, is proved in Section \ref{sec comp}. We establish the games and show the existence theorem in Section \ref{sec existence}. Section \ref{sec stability} is devoted to the stability results and Section \ref{sec properties} is intended to show further properties of the evolution including the uniqueness and finite extinction with the interesting asymptotic profile.

\section{Tools from  Calculus in $\mathcal{H}$}
Good references for this section are the course notes \cite{M1} and the monograph
\cite{CDPT}.
\subsection{Preliminaries}\label{sec prelim}
Recall the that Heisenberg group $\mathcal{H}$ is $\mathbb{R}^{3}$ endowed with the non-commutative group multiplication 
\[
(p_1, p_2, p_3)\cd (q_1, q_2, q_3)=\left(p_1+q_1, p_2+q_2, p_3+q_3+\frac{1}{2}(p_1q_2-q_1p_2)\right),
\]
for all $p=(p_{1},p_{2},p_{3})$ and $q=(q_{1},q_{2}.q_{3})$ in $\mathcal{H}$.
The Haar measure if $\mathcal{H}$ is the usual Lebesgue measure in $\mathbb{R}^{3}$.
The Kor\'anyi gauge is given by
\[
|p|=((p_1^2+p_2^2)^2+16 p_3^2)^{1/4},
\]
and the left-invariant Kor\'anyi or gauge metric is 
\[
d(p, q)=|q^{-1}\cdot p|.
\]
The Kor\'anyi ball of radius $r>0$ centered at $p$ is
$$B_r(p):=\{q\in \mathcal{H}\colon d(p,q)<r\}.$$
The Lie Algebra of $\mathcal{H}$ is generated by the left-invariant vector fields
\[
\begin{aligned}
& X_1=\frac{\partial}{\partial p_{1}}-\frac{p_{2}}{2}\frac{\partial}{\partial p_{3}};\\
& X_2=\frac{\partial}{\partial p_{2}}+\frac{p_{1}}{2}\frac{\partial}{\partial p_{3}};\\
& X_3=\frac{\partial}{\partial p_{3}}.
\end{aligned}
\]
  One may easily verify the commuting relation $X_3=[X_1, X_2]=
  X_{1}X_{2}- X_{2}X_{1}$.\par
  For any smooth real valued function $u$ defined in an open subset of $\mathcal{H}$, the horizontal gradient of $u$ is 
 $$\nabla_H u=(X_1 u, X_2 u)$$  while the complete gradient of $u$ is 
  $$\nabla u=(X_1 u, X_2 u, X_3 u).$$ For further details about the relation between sub-Riemannian metrics in Carnot group and Riemaniann metrics see \cite{AFM}.  

The symmetrized second horizontal Hessian  $(\nabla_H^2 u)^\ast$ is the $2\times2$ symmetry matrix given by  
\[
(\nabla_H^2 u)^\ast:=\left(\begin{array}{cc} X_1^2 u & (X_1X_2 u+X_2X_1 u)/2\\
(X_1X_2 u+X_2X_1 u)/2 & X_2^2 u\end{array}\right).
\]
We will also consider the symmetrized complete Hessian $(\nabla^2 u)^\ast$  defined as
the $3\times3$ symmetric matrix 
\[
(\nabla^2 u)^\ast:=
\left(\begin{array}{ccc} X_1^2 u & (X_1X_2 u+X_2X_1 u)/2 & (X_1X_3 u+X_3X_1 u)/2\\
(X_1X_2 u+X_2X_1 u)/2 & X_2^2 u & (X_2X_3 u+X_3X_2 u)/2 \\
(X_1X_3 u+X_3X_1 u)/2 & (X_2X_3 u+X_3X_2 u)/2 & X_3^2 u\end{array}
\right),
\]

\subsection{Derivatives of auxiliary functions}
Here we include several basic calculations for some test functions related to the Kor\'anyi distance, which will be used in the proof of comparison theorem for generalized horizontal mean curvature flow.\par

We are interested in the first and second horizontal derivatives of 
\[
\begin{aligned}
f(p, q):&=d(p, q)^4\\
&=\left((p_1-q_1)^2+(p_2-q_2)^2\right)^2+16\left(p_3-q_3-{1 \over 2}q_1p_2+{1\over 2}q_2p_1\right)^2.\\
\end{aligned}
\] 
We use the super index $p$ to denote derivatives with respect to the $p$ variable and
follow the same convention for derivatives with respect to $q$. 

Let us record the results of our calculation:

\begin{equation}\label{deri-fp1}
\begin{aligned}
X_1^p f(p, q)=4\big((p_1-&q_1)^2+(p_2-q_2)^2\big)(p_1-q_1)\\
&-16(p_2-q_2)\left(p_3-q_3+\frac{1}{2}(q_2p_1-q_1p_2)\right);\\
\end{aligned}
\end{equation}
\begin{equation}\label{deri-fp2}
\begin{aligned}
X_2^p f(p, q)=4\big((p_1-&q_1)^2+(p_2-q_2)^2\big)(p_2-q_2)\\
&+16(p_1-q_1)\left(p_3-q_3+\frac{1}{2}(q_2p_1-q_1p_2)\right);\\
\end{aligned}
\end{equation}

\begin{equation}\label{deri-fq1}
\begin{aligned}
X_1^q f(p, q)=-4\big((p_1-&q_1)^2+(p_2-q_2)^2\big)(p_1-q_1)\\
&-16(p_2-q_2)\left(p_3-q_3+\frac{1}{2}(q_2p_1-q_1p_2)\right);\\
\end{aligned}
\end{equation}

\begin{equation}\label{deri-fq2}
\begin{aligned}
X_2^q f(p, q)=-4\big((p_1-&q_1)^2+(p_2-q_2)^2\big)(p_2-q_2)\\
&+16(p_1-q_1)\left(p_3-q_3+\frac{1}{2}(q_2p_1-q_1p_2)\right);\\
\end{aligned}
\end{equation}

It is clear that in general $\nabla_H^p f(p, q)\neq -\nabla_H^q f(p, q)$, which is not the case in the Euclidean case. But the following Euclidean property still holds here. 
\begin{prop}\label{prop1}
If either $\nabla_H^p \left(|q^{-1}\cd p|^4\right)=0$ or $\nabla_H^q \left(|q^{-1}\cd p|^4\right)=0$, then the horizontal components of $p$ and $q$ are equal, i.e., $p_1=q_1$ and $p_2=q_2$. \end{prop}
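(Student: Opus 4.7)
The plan is to treat the two equations $X_1^p f(p,q)=0$ and $X_2^p f(p,q)=0$ as a linear system in the two scalar quantities
\[
A := 4\bigl((p_1-q_1)^2+(p_2-q_2)^2\bigr), \qquad B := 16\bigl(p_3-q_3+\tfrac12(q_2p_1-q_1p_2)\bigr),
\]
with coefficients $a := p_1-q_1$ and $b := p_2-q_2$. According to formulas \eqref{deri-fp1}--\eqref{deri-fp2}, the vanishing of $\nabla_H^p f$ amounts to
\[
Aa - Bb = 0, \qquad Ab + Ba = 0.
\]
I would then simply multiply the first equation by $a$, the second by $b$, and add to obtain $A(a^2+b^2)=0$. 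Since $A = 4(a^2+b^2)$, this gives $4(a^2+b^2)^2=0$, so $a=b=0$, i.e., $p_1=q_1$ and $p_2=q_2$, as claimed.

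For the case $\nabla_H^q f(p,q)=0$, the formulas \eqref{deri-fq1}--\eqref{deri-fq2} differ from the $p$-case only by sign changes in the $A$-terms: the system becomes $-Aa-Bb=0$ and $-Ab+Ba=0$. Multiplying the first by $-a$ and the second by $-b$ and adding yields again $A(a^2+b^2)=0$, and the same conclusion follows.

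Conceptually, the trick is that the pair $(X_1^p f, X_2^p f)$ can be viewed, modulo signs, as the real and imaginary parts of the product of the complex numbers $A+iB$ and $a+ib$; so the vanishing of both components forces either $a+ib=0$ (which is what we want) or $A+iB=0$, and in the latter case $A=0$ already gives $a^2+b^2=0$. There is no real obstacle here since the computation is a direct algebraic manipulation of the explicit formulas already recorded in \eqref{deri-fp1}--\eqref{deri-fq2}; the only thing to notice is precisely this multiplicative structure, which makes both the $p$- and $q$-derivative cases fall to the same one-line argument.
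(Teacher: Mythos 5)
Your proof is correct and follows essentially the same approach as the paper's: both reduce the vanishing of $\nabla_H^p f$ (or $\nabla_H^q f$) to the homogeneous linear system in $a=p_1-q_1$, $b=p_2-q_2$ with coefficients built from $A$ and $B$, and both ultimately rely on the identity $A = 4(a^2+b^2)$. Your manipulation (multiplying by $a$ and $b$ and adding, equivalently the factorization $(A+iB)(a+ib)=0$) merely bypasses the paper's explicit case split on the determinant $A^2+B^2$, but the underlying mechanism is identical.
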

\begin{proof}
Set 
\[
\begin{aligned}
&A:=4\big((p_1-q_1)^2+(p_2-q_2)^2\big),\\ 
&B:=16\left(p_3-q_3+\frac{1}{2}(q_2p_1-q_1p_2)\right).
\end{aligned}
\] 
When $\nabla_H^p \left(|q^{-1}\cd p|^4\right)=0$, the calculations (\ref{deri-fp1}) and (\ref{deri-fp2}) read
\begin{equation}\label{linear system}
\left\{
\begin{aligned}
A(p_1-q_1)-B(p_2-q_2)=0;\\
B(p_1-q_1)+A(p_2-q_2)=0
\end{aligned}
\right.
\end{equation}
with $\det\left(\begin{array}{cc} A & -B\\ B & A \end{array}\right)=A^2+B^2\geq 0$. Since $A^2+B^2=0$ implies that $p_i=q_i$ for $i=1, 2$, the desired result is trivial if $A^2+B^2=0$. If the determinant is not zero, then we also obtain $q_1=p_1$ and $q_2=p_2$ by solving the linear system (\ref{linear system}).
The same argument applies to the case when $\nabla_H^q \left(|q^{-1}\cd p|^4\right)=0$.
\end{proof}

We next calculate the second horizontal derivatives. 
\begin{equation}\label{sec deri1}
X_1^{2, p} f(p, q)=X_1^{2, q} f(p, q)= 12(p_1-q_1)^2 + 12(p_2-q_2)^2;
\end{equation}

\begin{equation}\label{sec deri2}
X_2^{2, p} f(p, q)=X_2^{2, q} f(p, q)= 12(p_2-q_2)^2 + 12(p_1-q_1)^2;
\end{equation}

\begin{equation}\label{sec deri3}
X_2^pX_1^p f(p, q)=X_1^qX_2^q f(p, q)
=-16\left(p_3-q_3+\frac{1}{2}(q_2p_1-q_1p_2)\right)=-B;
\end{equation}

\begin{equation}\label{sec deri4}
X_1^pX_2^p f(p, q)=X_2^qX_1^q f(p, q)=16\left(p_3-q_3+\frac{1}{2}(q_2p_1-q_1p_2)\right)=B.
\end{equation}
It is clear that 
\[
\frac{1}{2}(X_1^pX_2^p f+X_2^pX_1^p f)=\frac{1}{2}(X_1^qX_2^q f+X_2^qX_1^q f)=0.
\]

For later use, let us investigate the derivatives of another function. Take 
\begin{equation}
\begin{aligned}
&g(p, q):=|p\cd q^{-1}|^4\\
&=\left((p_1-q_1)^2+(p_2-q_2)^2\right)^2+16\left(p_3-q_3-{1 \over 2}p_1q_2+{1\over 2}p_2q_1\right)^2.\\
\end{aligned}
\end{equation}
Then 
\begin{equation}\label{deri-gp1}
\begin{aligned}
X_1^pg(p, q)=4((p_1-q_1)^2&+(p_2-q_2)^2)(p_1-q_1)\\
&-16(p_2+q_2)\left(p_3-q_3-{1\over 2}p_1q_2+{1\over 2}p_2q_1\right);
\end{aligned}
\end{equation}
\begin{equation}\label{deri-gp2}
\begin{aligned}
X_2^pg(p, q)=4((p_1-q_1)^2&+(p_2-q_2)^2)(p_2-q_2)\\
&+16(p_1+q_1)\left(p_3-q_3-{1\over 2}p_1q_2+{1\over 2}p_2q_1\right);
\end{aligned}
\end{equation}
\begin{equation}\label{deri-gq1}
\begin{aligned}
X_1^qg(p, q)=-4((p_1-q_1)^2&+(p_2-q_2)^2)(p_1-q_1)\\
&+16(p_2+q_2)\left(p_3-q_3-{1\over 2}p_1q_2+{1\over 2}p_2q_1\right);
\end{aligned}
\end{equation}
\begin{equation}\label{deri-gq2}
\begin{aligned}
X_2^qg(p, q)=-4((p_1-q_1)^2&+(p_2-q_2)^2)(p_2-q_2)\\
&-16(p_1+q_1)\left(p_3-q_3-{1\over 2}p_1q_2+{1\over 2}p_2q_1\right).
\end{aligned}
\end{equation}
\begin{rmk}\label{oppo grad}
In this case, we do have $\nabla_H^p g(p, q)= -\nabla_H^q g(p, q)$. But the property  in Proposition \ref{prop1} does not hold  in general.
\end{rmk}

The second derivatives are given below.
\begin{equation}\label{sec deri-g1}
X_1^{2, p}g(p, q)=X_1^{2, q}g(p, q)=12(p_1-q_1)^2+4(p_2-q_2)^2+8(p_2+q_2)^2;
\end{equation}

\begin{equation}\label{sec deri-g2}
X_2^{2, p}g(p, q)=X_2^{2, q}g(p, q)=4(p_1-q_1)^2+12(p_2-q_2)^2+8(p_1+q_1)^2;
\end{equation}

\begin{equation}\label{sec deri-g3}
\begin{aligned}
X_1^pX_2^p g(p, q)=X_2^qX_1^q g(p, q)=&8(p_1-q_1)(p_2-q_2)-8(p_1+q_1)(p_2+q_2)\\
&+16(p_3-q_3-{1\over 2}p_1q_2+{1\over 2}p_2q_1);
\end{aligned}
\end{equation}

\begin{equation}\label{sec deri-g4}
\begin{aligned}
X_2^pX_1^p g(p, q)=X_1^qX_2^q g(p, q)=&8(p_1-q_1)(p_2-q_2)-8(p_1+q_1)(p_2+q_2)\\
&-16(p_3-q_3-{1\over 2}p_1q_2+{1\over 2}p_2q_1);
\end{aligned}
\end{equation}

\begin{equation}\label{sec deri-g5}
\begin{aligned}
\frac{1}{2}(X_1^pX_2^p g+X_2^pX_1^p g)&=\frac{1}{2}(X_1^qX_2^q g+X_2^qX_1^q g)\\
&=8(p_1-q_1)(p_2-q_2)-8(p_1+q_1)(p_2+q_2).
\end{aligned}
\end{equation}

\subsection{Extrema in the Heisenberg group}

As $|p|^2\approx p_1^2+p_2^2+|p_3|$ in Heisenberg group, the Taylor formula reads
\begin{equation}\label{Taylor heisenberg}
u(p)=u(\hat{p})+\langle \hat{p}^{-1}\cdot p, \nabla u(\hat{p})\rangle+{1\over 2}\langle (\nabla_H^2 u)^\ast(\hat{p}) h, h\rangle+o(|\hat{p}^{-1}\cd p|^2),
\end{equation}
where $h=(p_1-\hat{p}_1, p_2-\hat{p}_2)$ is the horizontal projection of $\hat{p}^{-1}\cd p$.

The following proposition follows easily from the Euclidean analog.
\begin{prop}[Maxima on Heisenberg group]\label{maximality}
Suppose $\mathcal{O}$ is an open subset of $\mathcal{H}$. Let $u\in C^2(\mathcal{O})$ and $\hat{p}\in \mathcal{O}$. If $u(p)\leq u(\hat{p})$ for all $p\in \mathcal{O}$, then $\nabla u(\hat{p})=0$ and $(\nabla_H^2 u)^\ast(\hat{p})\leq 0$. \par
Analogously, for minima we have that if $u(p)\ge u(\hat{p})$ for all $p\in \mathcal{O}$, then $\nabla u(\hat{p})=0$ and $(\nabla_H^2 u)^\ast(\hat{p})\ge 0$. 
 
\end{prop}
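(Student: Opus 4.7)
My plan is to derive both statements directly from the Heisenberg Taylor formula (\ref{Taylor heisenberg}) by testing against curves $p = \hat p \cd (sv)$, with $s \in \mathbb{R}$ small and $v = (v_1, v_2, v_3) \in \mathbb{R}^3$ fixed. Since $\hat p^{-1} \cd p$ has horizontal projection $(p_1 - \hat p_1, p_2 - \hat p_2)$, this choice yields $h = (sv_1, sv_2)$ for the horizontal vector appearing in (\ref{Taylor heisenberg}), and the Kor\'anyi gauge satisfies $|\hat p^{-1}\cd p|^4 = s^4(v_1^2+v_2^2)^2 + 16 s^2 v_3^2$. All such $p$ lie in $\mathcal{O}$ for $|s|$ small enough.

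First I would establish that $\nabla u(\hat p) = 0$. For a horizontal test direction $v = (v_1, v_2, 0)$ the gauge is $|\hat p^{-1}\cd p| = |s|\sqrt{v_1^2 + v_2^2}$, so substituting $p$ into (\ref{Taylor heisenberg}) and using $u(p) \le u(\hat p)$ gives $s(v_1 X_1 u + v_2 X_2 u)(\hat p) + O(s^2) \le 0$; dividing by $s$ with the two signs separately and letting $s \to 0$ forces $v_1 X_1 u(\hat p) + v_2 X_2 u(\hat p) = 0$ for every $(v_1, v_2) \in \mathbb{R}^2$, hence $X_1 u(\hat p) = X_2 u(\hat p) = 0$. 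For the vertical direction $v = (0, 0, 1)$ we have $h = 0$ and $|\hat p^{-1}\cd p|^2 \sim |s|$, so the expansion collapses to $s X_3 u(\hat p) + o(|s|) \le 0$; the same sign-splitting argument yields $X_3 u(\hat p) = 0$.

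Second, with $\nabla u(\hat p) = 0$ in hand, the Taylor formula reduces to $u(p) - u(\hat p) = \tfrac{1}{2}\langle (\nabla_H^2 u)^\ast(\hat p) h, h\rangle + o(|\hat p^{-1}\cd p|^2)$. Choosing the horizontal increment $\hat p^{-1}\cd p = (s\xi_1, s\xi_2, 0)$, so that $h = s\xi$ and $|\hat p^{-1}\cd p|^2 = s^2|\xi|^2$, I would divide the inequality $u(p) \le u(\hat p)$ by $s^2$ and pass to the limit $s \to 0$ to obtain $\langle (\nabla_H^2 u)^\ast(\hat p)\xi, \xi\rangle \le 0$ for every $\xi \in \mathbb{R}^2$, which is precisely $(\nabla_H^2 u)^\ast(\hat p) \le 0$. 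The minimum case is identical after reversing inequalities, or by applying the maximum statement to $-u$.

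The only delicate point is the mismatch in homogeneity of the Kor\'anyi gauge between horizontal directions (where $|\cd| \sim |s|$) and vertical directions (where $|\cd|^2 \sim |s|$), which forces the first-order step to handle the vertical direction separately. This is not really a conceptual obstacle; it simply dictates the ordering of the two arguments, since a vertical perturbation carries no information about the horizontal Hessian but still detects the first-order term $sX_3 u(\hat p)$.
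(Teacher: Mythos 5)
Your proof is correct. The paper leaves the argument as a remark (it ``follows easily from the Euclidean analog''): at a Euclidean local maximum the Euclidean gradient vanishes and the Euclidean Hessian of $u$ is nonpositive; the conclusions transfer because $(X_1,X_2,X_3)$ is a pointwise invertible linear change of the coordinate frame $(\partial_{p_1},\partial_{p_2},\partial_{p_3})$, so $\nabla u(\hat p)=0$, and because $\langle(\nabla_H^2u)^\ast(\hat p)\xi,\xi\rangle$ coincides with the Euclidean Hessian quadratic form of $u$ at $\hat{p}$ evaluated at the horizontal lift $\eta=(\xi_1,\xi_2,\tfrac12(\xi_2\hat p_1-\xi_1\hat p_2))$, giving $(\nabla_H^2 u)^\ast(\hat p)\le 0$. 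You instead argue intrinsically from the Heisenberg Taylor formula (\ref{Taylor heisenberg}), testing along $p=\hat p\cdot(sv)$. Both routes are sound. Yours has the merit of staying within the group structure, but it relies on the error bound $o(|\hat p^{-1}\cdot p|^2)$ asserted in (\ref{Taylor heisenberg}) and, as you correctly observe, must treat horizontal and vertical increments at different orders because of the anisotropic gauge scaling; the Euclidean route delegates all error control to the standard Euclidean Taylor expansion and avoids the order mismatch altogether.
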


\section{Definitions of solutions}\label{sec defi}

\subsection{General definitions} For a vector $\eta\in\mathbb{R}^{2}$ and
a $2\times2$ symmetric matrix $Y\in \mathbf{S}^2$
we define
\[
F(\eta, Y)=-\tr\left(\left(I-\frac{\eta \otimes \eta}{|\eta|^2}\right)Y\right).
\]
In any open subset $\mathcal{O}\subset \mathcal{H}\times (0, \infty)$ the mean curvature flow equation
\begin{equation}\label{mcf}
u_t-\tr\left[\bigg(I-\frac{\nabla_H u\otimes \nabla_H u}{|\nabla_H u|^2}\bigg)(\nabla_H^2 u)^\ast\right]=0 \ \text{ in $\mathcal{O}$}
\end{equation}
can be written  as
\[
u_t+F(\nabla_H u, (\nabla_H^2 u)^\ast)=0 \ \text{ in $\mathcal{O}$.}
\]

We next define the semicontinuous envelopes in the following way: for any function $h$ defined on a set $\mathcal{O}$ of a metric space $\mathcal{M}$ with values in $\mathbb{R}\cup\{\pm\infty\}$, we take
\begin{equation}\label{upper enve}
h^\star(x)=\lim_{r\to 0}\sup\{h(y): y\in \mathcal{O}\cap B_r(x)\}
\end{equation}
and 
\begin{equation}\label{lower enve}
h_\star(x)=\lim_{r\to 0}\inf\{h(y): y\in \mathcal{O}\cap B_r(x)\}
\end{equation}
for any $x\in \ol{\mathcal{O}}$, where $B_r(x)$ denotes the ball with radius $r>0$ centered at $x$. It is easily seen that 
\[
F^\star(0, 0)=F_\star(0, 0)=0;
\]
\[
F^\star(\eta, X)=F_\star(\eta, X)=F(\eta, X) \text{ for all $(\eta, X)\in\mathbb{R}^2\setminus\{0\}\times\mathbf{S}^2$.}
\]

One type of definition of viscosity solutions of (\ref{mcf}) is as follows.
\begin{defi}\label{enve defi}
An upper (resp., lower) semicontinuous function $u$ defined on $\mathcal{O}\subset\mathcal{H}\times (0, \infty)$ is a subsolution (resp., supersolution) of (\ref{mcf}) if 
\begin{enumerate}
\item[(\rmnum{1})] $u<\infty$ (resp., $u>-\infty$) in $\mathcal{O}$;
\item[(\rmnum{2})] for any smooth function $\phi$ such that
\[
\max_{\mathcal{O}}u-\phi=(u-\phi)(\hat{p}, \hat{t}),
\]
\[
\text{(resp., }
\min_{\mathcal{O}}u-\phi=(u-\phi)(\hat{p}, \hat{t}),)
\]   
it satisfies 
\[
\phi_t+F_\star(\nabla_H \phi, (\nabla_H^2 \phi)^\ast)\leq 0 \text{ at $(\hat{p}, \hat{t})$,}
\]
\[
\text{(resp.,  }
\phi_t+F^\star(\nabla_H \phi, (\nabla_H^2 \phi)^\ast)\geq 0 \text{ at $(\hat{p}, \hat{t})$).}
\]
\end{enumerate}
A function $u$ is called a solution of (\ref{mcf}) if it is both a subsolution and a supersolution.
\end{defi}

We now propose another definition for the horizontal mean curvature flow equation following Giga \cite{G1}.
\begin{defi}\label{trad defi}
An upper (resp., lower) semicontinuous function $u$ defined on $\mathcal{O}\subset\mathcal{H}\times (0, \infty)$ is a subsolution (resp., supersolution) of (\ref{mcf})
if 
\begin{enumerate}
\item[(\rmnum{1})] $u<\infty$ (resp., $u>-\infty$) in $\mathcal{O}$;
\item[(\rmnum{2})] for any smooth function $\phi$ such that
\[
\max_{\mathcal{O}}u-\phi=(u-\phi)(\hat{p}, \hat{t}),
\]
\[
\text{(resp., }
\min_{\mathcal{O}}u-\phi=(u-\phi)(\hat{p}, \hat{t}),)
\]    
it satisfies 
\[
\phi_t+F(\nabla_H \phi, (\nabla_H^2 \phi)^\ast)\leq 0 \text{ at $(\hat{p}, \hat{t})$,}
\]
\[
\text{(resp.,  }
\phi_t+{F}(\nabla_H \phi, (\nabla_H^2 \phi)^\ast)\geq 0 \text{ at $(\hat{p}, \hat{t})$,)}
\]
when $\nabla_H \phi(\hat{p}, \hat{t})\neq 0$ and 
\[
\phi_t(\hat{p}, \hat{t})\leq 0, 
\]
\[
\text{(resp., }
\phi_t(\hat{p}, \hat{t})\geq 0, )
\]
when $\nabla_H \phi(\hat{p}, \hat{t})= 0$ and $(\nabla_H^2\phi)^\ast(\hat{p}, \hat{t})=0$.
\end{enumerate}
A function $u$ is called a solution of (\ref{mcf}) if it is both a subsolution and a supersolution.
\end{defi}
\begin{rmk}
One may replace the maximum (resp., minimum) in condition (\rmnum{2}) of the above definitions with a strict maximum by adding a positive (resp., negative) smooth gauge to $\phi$.
\end{rmk}

The  definition  using subelliptic semijets  is as follows. 
\begin{defi}\label{jet defi}
An upper (resp., lower) semicontinuous function $u$ defined on $\mathcal{O}\subset\mathcal{H}\times (0, \infty)$ is a subsolution (resp., supersolution) of (\ref{mcf}) if 
\begin{enumerate}
\item $u<\infty$ (resp., $u>-\infty$) in $\mathcal{O}$;
\item for any $(\tau, \eta, \mathcal{X})\in \ol{J}_H^{2, +}u(\hat{p}, \hat{t})$ (resp., $(\tau, \eta, \mathcal{X})\in \ol{J}_H^{2, -}u(\hat{p}, \hat{t})$) with $(\hat{p}, \hat{t})\in \mathcal{O}$, we have
\[
\phi_t+F_\star(\nabla_H \phi, (\nabla_H^2 \phi)^\ast)\leq 0 \text{ at $(\hat{p}, \hat{t})$,}
\]
\[
\text{(resp.,  }
\phi_t+F^\star(\nabla_H \phi, (\nabla_H^2 \phi)^\ast)\geq 0 \text{ at $(\hat{p}, \hat{t})$,)}
\]

\end{enumerate}
A function $u$ is called a solution of (\ref{mcf}) if it is both a subsolution and a supersolution.
\end{defi}
It is not hard to see that Definition \ref{jet defi} is equivalent to Definition \ref{enve defi}.
Roughly speaking, in Definition \ref{trad defi} and Definition \ref{jet defi} we restrict the test function space to the following 
\[
\mathcal{A}_0=\{\phi\in C^\infty(\mathcal{H}): \nabla_H\phi(p)=0 \text{ implies } (\nabla^2_H)^\ast \phi(p)=0\}.
\]
The next result, which is actually a variant of \cite[Proposition 2.2.8]{G1} for the Heisenberg group, indicates the equivalence between this new definition and the known one in spite of the restriction on the test functions.
\begin{prop}[Equivalence of definitions]
An upper (resp., lower) semicontinuous function $u:\mathcal{O}\to \mathbb{R}$ is a subsolution (resp., supersolution) of (\ref{mcf}) defined as in Definition \ref{trad defi} (in $\mathcal{O}$) if and only if it is a subsolution (resp., superolution) in $\mathcal{O}$ in the sense of Definition \ref{enve defi}.
\end{prop}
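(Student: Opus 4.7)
For the forward direction (Def~\ref{enve defi} implies Def~\ref{trad defi}), the proof is just unpacking. If $u-\phi$ attains a maximum at $(\hat p,\hat t)$ and $\phi_t+F_\star(\nabla_H\phi,(\nabla_H^2\phi)^\ast)\le 0$ holds there, then when $\nabla_H\phi(\hat p,\hat t)\neq 0$ we have $F_\star=F$ by continuity of $F$ off $\{\eta=0\}$, yielding the first inequality of Def~\ref{trad defi}; when $\nabla_H\phi(\hat p,\hat t)=0$ and $(\nabla_H^2\phi)^\ast(\hat p,\hat t)=0$, the identity $F_\star(0,0)=0$ recorded in the discussion preceding Def~\ref{enve defi} immediately gives $\phi_t(\hat p,\hat t)\le 0$.

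For the converse, let $u$ be a subsolution per Def~\ref{trad defi} and $\phi\in C^\infty$ with $u-\phi$ attaining a maximum at $(\hat p,\hat t)$; by the remark following Def~\ref{trad defi} we may assume this maximum is strict. The nontrivial case is $\nabla_H\phi(\hat p,\hat t)=0$ with $Y:=(\nabla_H^2\phi)^\ast(\hat p,\hat t)\ne 0$. A short computation of the eigenvalue range of $\hat\eta^{\top}Y\hat\eta$ shows $F_\star(0,Y)=-\lambda_{\max}(Y)$, so the task is to prove $\phi_t(\hat p,\hat t)\le \lambda_{\max}(Y)$. For each unit vector $\xi\in\mathbb{R}^2$ and $\varepsilon>0$, introduce the perturbation
\[
\phi^\varepsilon(p,t):=\phi(p,t)+\varepsilon\bigl(\xi_1(p_1-\hat p_1)+\xi_2(p_2-\hat p_2)\bigr).
\]
Applying $X_1$ and $X_2$ to this linear term, one checks that its horizontal gradient is the constant vector $\varepsilon\xi$ and its symmetrized horizontal Hessian vanishes identically, so $\nabla_H\phi^\varepsilon\equiv\nabla_H\phi+\varepsilon\xi$ and $(\nabla_H^2\phi^\varepsilon)^\ast\equiv(\nabla_H^2\phi)^\ast$. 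Strict maximality and uniform convergence of $\phi^\varepsilon$ to $\phi$ on a compact neighborhood guarantee that $u-\phi^\varepsilon$ attains a local maximum at some $(\hat p^\varepsilon,\hat t^\varepsilon)\to(\hat p,\hat t)$.

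Provided $\nabla_H\phi^\varepsilon(\hat p^\varepsilon,\hat t^\varepsilon)\ne 0$, Def~\ref{trad defi} yields
\[
\phi_t(\hat p^\varepsilon,\hat t^\varepsilon)+F\bigl(\nabla_H\phi(\hat p^\varepsilon,\hat t^\varepsilon)+\varepsilon\xi,\,(\nabla_H^2\phi)^\ast(\hat p^\varepsilon,\hat t^\varepsilon)\bigr)\le 0.
\]
Extract a subsequence along which the unit vector $\nabla_H\phi^\varepsilon(\hat p^\varepsilon,\hat t^\varepsilon)/|\nabla_H\phi^\varepsilon(\hat p^\varepsilon,\hat t^\varepsilon)|$ converges to some $e\in S^1$; passing to the limit using the continuity of $F$ away from its singular set gives $\phi_t(\hat p,\hat t)+F(e,Y)\le 0$. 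Since for any unit $e$ one has $-F(e,Y)=\tr(Y)-e^{\top}Ye\le \lambda_{\max}(Y)$, the desired inequality $\phi_t(\hat p,\hat t)\le \lambda_{\max}(Y)$ follows. The supersolution direction is entirely symmetric.

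The main obstacle is guaranteeing that $\nabla_H\phi^\varepsilon(\hat p^\varepsilon,\hat t^\varepsilon)\ne 0$ for a cofinal sequence of $\varepsilon$: in the exceptional case $\nabla_H\phi(\hat p^\varepsilon,\hat t^\varepsilon)=-\varepsilon\xi$ the perturbed test function has vanishing horizontal gradient but nonvanishing Hessian (still close to $Y$) at the maximum, and neither branch of Def~\ref{trad defi} is triggered. This is handled by adding to $\phi^\varepsilon$ a coercive higher-order gauge penalty---e.g.\ a suitable multiple of $|\hat p^{-1}\cdot p|^4+(t-\hat t)^2$, which by the calculations (\ref{deri-fp1})--(\ref{sec deri4}) contributes nothing to $\nabla_H\phi^\varepsilon$ or $(\nabla_H^2\phi^\varepsilon)^\ast$ at $(\hat p,\hat t)$---so as to force the max point to converge fast enough that the perturbation direction $\varepsilon\xi$ dominates; alternatively one may vary $\xi$ over the unit circle and invoke a transversality argument to exclude the exceptional case for generic~$\xi$.
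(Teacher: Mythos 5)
Your forward direction is correct. The converse is where the substance lies, and there you identify the right obstruction but do not overcome it. Your main line---perturbing $\phi$ by the linear term with horizontal gradient $\varepsilon\xi$, applying Definition~\ref{trad defi} at nearby maximizers $(\hat p^\varepsilon,\hat t^\varepsilon)$, and passing to the limit using $-F(e,Y)\le\lambda_{\max}(Y)$ for every unit vector $e$---is a legitimate scheme, and the computation $F_\star(0,Y)=-\lambda_{\max}(Y)$ is correct. But the exceptional case you flag at the end, namely $\nabla_H\phi(\hat p^\varepsilon,\hat t^\varepsilon)=-\varepsilon\xi$ so that the perturbed test function has vanishing horizontal gradient yet a Hessian close to $Y\neq0$, with neither branch of Definition~\ref{trad defi} firing, is not a side issue to be waved away: it is exactly the degeneracy that distinguishes Definition~\ref{trad defi} from Definition~\ref{enve defi}, and nothing in your setup rules it out for all small $\varepsilon$. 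The maximizer $(\hat p^\varepsilon,\hat t^\varepsilon)$ is determined by the merely semicontinuous, unknown $u$, so no control over $\nabla_H\phi(\hat p^\varepsilon,\hat t^\varepsilon)$ is available. Your ``coercive gauge penalty'' suggestion gestures at the right tool but conflates derivatives at $(\hat p,\hat t)$ (where the gauge's horizontal derivatives vanish) with derivatives at $(\hat p^\varepsilon,\hat t^\varepsilon)$ (where they do not), which is precisely the quantity one must control; and the ``transversality in $\xi$'' idea is unsubstantiated, since the implicit dependence of $\hat p^\varepsilon$ on $\xi$ makes it unclear what ``generic'' should even mean.

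The paper resolves exactly this difficulty by doubling variables: it maximizes $u(p,t)-\tfrac1\varepsilon|q^{-1}\cdot p|^4-\phi(q,t)$ over $(p,q,t)$, and separates the $p$-maximality (which supplies the test object for $u$) from the $q$-maximality (which transfers information to $\phi$). The structural engine is Proposition~\ref{prop1} together with (\ref{sec deri1})--(\ref{sec deri4}): if $\nabla_H^q f=0$ at the maximizer then $p_i=q_i$ for $i=1,2$, which simultaneously forces $\nabla_H^p f=0$ \emph{and} $(\nabla_H^{2,p} f)^\ast=0$. That is precisely the situation in which the second branch of Definition~\ref{trad defi} does fire, so the degeneracy that blocks your argument never arises; moreover the $q$-second-order condition automatically delivers $(\nabla_H^2\phi)^\ast(\hat p,\hat t)\ge0$, so one does not even need to compute $F_\star(0,Y)$. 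In short, the complementarity of the Kor\'anyi gauge (vanishing first horizontal derivative $\Rightarrow$ vanishing symmetrized horizontal Hessian) is the crucial input, and to rescue your single-variable perturbation you would have to re-import it; as written, the proof is incomplete.
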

\begin{proof}
It is obvious that Definition \ref{trad defi} is a relaxation of Definition \ref{enve defi}. We prove the reverse implication only for subsolutions. The statement for supersolutions can be proved similarly. Suppose there are a smooth function $\phi$ and $(\hat{p}, \hat{t})\in \mathcal{O}$ such that 
\[
\max_\mathcal{O} (u-\phi)=(u-\phi)(\hat{p}, \hat{t})
\]
By usual modification in the definition of viscosity solutions, we may assume it is a strict maximum. 
We construct 
\[
\Psi_\varepsilon(p, q, t):=u(p, t)-\frac{1}{\varepsilon}|q^{-1}\cd p|^4-\phi(q, t).
\]
It is clear that 
\[
\Psi^\ast(p, q, t):=\limsups_{\varepsilon\to 0}\Psi_\varepsilon (p, q, t)=\begin{cases} u(p, t)-\phi(p, t) &\text{ if $p=q$}\\ -\infty &\text{ if $p\neq q$}\end{cases}
\] 
attains a strict maximum at $(\hat{p}, \hat{p}, \hat{t})$. By the convergence of maximizers (\cite[Lemma 2.2.5]{G1}), we may take $p^\varepsilon, q^\varepsilon, t^\varepsilon$ converging to $\hat{p}, \hat{p}, \hat{t}$ respectively as $\varepsilon\to 0$ such that
$\Psi_\varepsilon$ attains a maximum at $(p^\varepsilon, q^\varepsilon, t^\varepsilon)$. It follows that $q\mapsto -\frac{1}{\varepsilon}|q^{-1}\cd p^\varepsilon|^4-\phi(q, t)$ has a maximum at $q^\varepsilon$, which, by Proposition \ref{maximality}, implies that
\begin{equation}\label{equiv1}
-{1\over \varepsilon}\nabla_H^q f(p^\varepsilon, q^\varepsilon)=\nabla_H\phi(q^\varepsilon, t^\varepsilon);
\end{equation}
\begin{equation}\label{equiv2}
-{1\over \varepsilon}(\nabla_H^{2, q}f)^\ast(p^\varepsilon, q^\varepsilon)\leq (\nabla_H^2\phi)^\ast(q^\varepsilon, t^\varepsilon),
\end{equation}
where $f(p, q)=|q^{-1}\cd p|^4$.

We next discuss the following two cases.\\
\textbf{Case A.} $\nabla_H\phi(q^\varepsilon, t^\varepsilon)\neq 0$ for a subsequence of $\varepsilon\to 0$. (We still use $\varepsilon$ to denote the subsequence.) \\
Since the maximality of $\Psi$ at $(p^\varepsilon, q^\varepsilon, t^\varepsilon)$ implies that
\[
(p, t)\mapsto u(p, t)-{1\over \varepsilon}f(p^\varepsilon, q^\varepsilon)-\phi(p\cd (p^\varepsilon)^{-1}\cd q^\varepsilon, t)
\]
attains a maximum at $(p^\varepsilon, t^\varepsilon)\in \mathcal{O}$. Denote $\phi^\varepsilon(p, t)=\phi(p\cd (p^\varepsilon)^{-1}\cd q^\varepsilon, t)$. We apply Definition \ref{trad defi} to get
\begin{equation}
\phi_t+F(\nabla_H\phi^\varepsilon, (\nabla_H^2\phi^\varepsilon)^\ast)\leq 0 \text{ at $(p^\varepsilon, t^\varepsilon)$}
\end{equation}
Since the derivative of the right multiplication tends to $0$ as $\varepsilon\to 0$ and its second derivatives are $0$, we have
\[
\nabla_H\phi^\varepsilon(p^\varepsilon, t^\varepsilon)\to \nabla_H\phi(\hat{p}, \hat{t}) \text{ and } (\nabla_H^2\phi^\varepsilon)^\ast(p^\varepsilon, t^\varepsilon)\to (\nabla_H^2\phi)^\ast(\hat{p}, \hat{t}) \text{ as $\varepsilon\to 0$}. 
\]
It follows immediately that
\[
\phi_t+F_\star(\nabla_H \phi, (\nabla_H^2 \phi)^\ast)\leq 0 \text{ at $(\hat{p}, \hat{t})$.}
\]

\noindent \textbf{Case B.} $\nabla_H\phi(q^\varepsilon, t^\varepsilon)= 0$ for all sufficiently small $\varepsilon>0$. \\
It follows from (\ref{equiv1}) that $\nabla_H^q f(p^\varepsilon, q^\varepsilon)=0$,  
which by Proposition \ref{prop1} yields that
\begin{equation}\label{equal deri}
p^\varepsilon_i=q^\varepsilon_i \text{ for $i=1, 2$.}
\end{equation}
In terms of (\ref{deri-fp1})-(\ref{deri-fp2}) and (\ref{sec deri1})--(\ref{sec deri4}), we have 
\begin{equation}
\nabla_H^p f(p^\varepsilon, q^\varepsilon)=0 \text{ and } \nabla_H^{2, p}f(p^\varepsilon, q^\varepsilon)=0. 
\end{equation}
Since $(p^\varepsilon, t^\varepsilon)$ is a maximizer of 
\[
(p, t)\mapsto u(p, t)-{1\over \varepsilon}f(p, q^\varepsilon)-\phi(q^\varepsilon, t),
\]
applying Definition \ref{trad defi} and sending the limit, we obtain
\begin{equation}\label{equiv3}
\phi_t(\hat{p}, \hat{t})\leq 0,
\end{equation}
On the other hand, by passing to the limit in (\ref{equiv1}) and (\ref{equiv2}), we have
\begin{equation}\label{phi deri1}
\nabla_H \phi(\hat{p}, \hat{t})=0
\end{equation} 
and 
\begin{equation}\label{phi deri2}
\nabla_H^2\phi(\hat{p}, \hat{t})\geq 0.
\end{equation}
By (\ref{phi deri1}),  (\ref{equiv3}) is equivalent to 
\[
\phi_t(\hat{p}, \hat{t})+F_\star(\nabla_H \phi(\hat{p}, \hat{t}), 0)\leq 0,
\]
which, thanks to (\ref{phi deri2}) and the ellipticity of $F$, implies that 
\[
\phi_t(\hat{p}, \hat{t})+F_\star(\nabla_H\phi(\hat{p}, \hat{t}), \nabla_H^2\phi(\hat{p}, \hat{t}))\leq 0.
\]

\end{proof}

\subsection{An explicit solution} We provide an example of solutions of (\ref{mcf}) when the initial value is the fourth power of a smooth gauge of the Heisenberg group. We can actually express a solution explicitly. 
\begin{prop} \label{prop explicit solution}
For any $p=(p_1, p_2, p_3)\in \mathcal{H}$, let 
\begin{equation}\label{gauge}
G(p)=|p|^4=(p_1^2+p_2^2)^2+16p_3^2.
\end{equation}
Then 
\begin{equation}\label{explicit solution}
w(p, t)=(p_1^2+p_2^2)^2+12 t(p_1^2+p_2^2)+16p_3^2+12t^2
\end{equation}
is a continuous solution of (\ref{mcf1}) and $w(p, 0)=G(p)$.
\end{prop}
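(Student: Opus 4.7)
The plan is to verify directly that $w$ is a smooth function satisfying $w(\cdot,0)=G$ and solving (\ref{mcf1}) in the viscosity sense. The initial condition is immediate from the formula. For the PDE, I shall compute the horizontal derivatives of $w$, show that the equation holds classically at every point where $\nabla_H w \neq 0$, and then verify the remaining characteristic points against the relaxed Definition \ref{trad defi}.

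Set $r^2=p_1^2+p_2^2$, $A=4(r^2+6t)$ and $B=16p_3$. A direct computation yields
\[
X_1 w = Ap_1 - Bp_2, \qquad X_2 w = Ap_2 + Bp_1,
\]
so $|\nabla_H w|^2 = (A^2+B^2)\,r^2$, and $\nabla_H w$ vanishes for $t>0$ exactly on the vertical axis $\{r=0\}$ (using $A>0$ there). Applying the vector fields a second time, and organizing the bookkeeping via $[X_1,X_2]=\partial_{p_3}$, gives $X_1^2 w = X_2^2 w = 12r^2+24t$, $X_1X_2 w = 16p_3$ and $X_2X_1 w = -16p_3$. Hence the symmetrized horizontal Hessian is diagonal with proportional entries:
\[
(\nabla_H^2 w)^*(p,t) = (12r^2+24t)\,I_2.
\]

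Because $(\nabla_H^2 w)^*$ is a multiple of the identity, the MCF operator collapses: using $\tr(I-\eta\otimes\eta/|\eta|^2)=1$ for every $\eta\in\mathbb{R}^2\setminus\{0\}$,
\[
\tr\!\left[\Big(I-\frac{\nabla_H w\otimes \nabla_H w}{|\nabla_H w|^2}\Big)(\nabla_H^2 w)^*\right] = 12r^2+24t = w_t
\]
at every $(p,t)$ with $r>0$ and $t>0$, so (\ref{mcf1}) holds classically, and hence also in the viscosity sense, at all noncharacteristic points (by Proposition \ref{maximality} and the monotonicity of $F$ in its second argument).

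At the characteristic points $(\hat p,\hat t)=(0,0,\hat p_3,\hat t)$ with $\hat t>0$ one has $\nabla_H w(\hat p,\hat t)=0$ but $(\nabla_H^2 w)^*(\hat p,\hat t)=24\hat t\,I_2\neq 0$. For the subsolution condition, any smooth $\phi$ with $w-\phi$ attaining a maximum at $(\hat p,\hat t)$ satisfies $(\nabla_H^2\phi)^*(\hat p,\hat t)\ge 24\hat t\,I_2>0$ by Proposition \ref{maximality}, so the degenerate case $(\nabla_H^2\phi)^*(\hat p,\hat t)=0$ of Definition \ref{trad defi} does not arise and no inequality must be checked. For the supersolution condition, a minimum of $w-\phi$ forces $(\nabla_H^2\phi)^*(\hat p,\hat t)\le 24\hat t\,I_2$; the only case in which Definition \ref{trad defi} demands a check is $(\nabla_H^2\phi)^*(\hat p,\hat t)=0$, and there the required inequality $\phi_t(\hat p,\hat t)\ge 0$ reduces to $w_t(\hat p,\hat t)=24\hat t>0$, which holds.

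Nothing in this argument is genuinely hard; the only step to be watchful about is the horizontal Hessian computation, where $X_1X_2 w$ and $X_2X_1 w$ are individually nonzero and proportional to $p_3$ with opposite signs, cancelling exactly upon symmetrization. It is precisely this cancellation that produces the scalar-matrix Hessian $(\nabla_H^2 w)^*=(12r^2+24t)I_2$ and reduces (\ref{mcf1}) to the tautology $w_t=12r^2+24t$.
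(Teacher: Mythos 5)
Your proof is correct and takes essentially the same route as the paper: compute the horizontal derivatives, observe that $(\nabla_H^2 w)^\ast = (12(p_1^2+p_2^2)+24t)I$ is a scalar matrix so that the trace operator collapses to $w_t$, and conclude. Where the paper's main proof invokes Definition \ref{enve defi} and notes that the scalar Hessian forces $F^\star=F_\star$ even on the characteristic axis, your case split using Definition \ref{trad defi} on $\{p_1=p_2=0\}$ reproduces the argument the paper records in the remark immediately following the proof.
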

\begin{proof}
Since $w$ is smooth, the proof is based on a straightforward calculation of the first derivatives of $w$
\begin{equation}\label{first deri explicit}
\begin{aligned}
&w_t=12(p_1^2+p_2^2)+24t,\\
&X_1 w=Kp_1-16p_2p_3, \ X_2 w=Kp_2+16p_1p_3,
\end{aligned}
\end{equation}
where $K:=4(p_1^2+p_2^2)+24t$ and the second derivatives
\begin{equation}
\begin{aligned}
&X_1^2 w=X_2^2 w=12 p_1^2+12 p_2^2+24t,\\
&X_1X_2w=16p_3, \ X_2X_1w=-16p_3,\\
&(\nabla_H^2 w)^\ast=\left(\begin{array}{cc}
12 p_1^2+12 p_2^2+24t & 0\\
0 & 12 p_1^2+12 p_2^2+24t
\end{array}\right).
\end{aligned}
\end{equation}
Noting that $(\nabla_H^2 w)^\ast$ is constant multiple of the identity, we easily conclude from our calculation that
\[
\begin{aligned}
&F^\star(\nabla_H w, \nabla_H^2 w)=F_\star(\nabla_H w, \nabla_H^2 w)\\
=&\tr\left[\bigg(I-\frac{\nabla_H w\otimes \nabla_H w}{|\nabla_H w|^2}\bigg)(\nabla_H^2 w)^\ast\right]=12(p_1^2+p_2^2)+24t=w_t,
\end{aligned}
\]
which means that $w$ satisfies (\ref{mcf}) by Definition \ref{enve defi}.
\end{proof}
\begin{rmk}
There is another way to understand that $w$ is a solution of (\ref{mcf}) by adopting Definition \ref{trad defi} when $\nabla_H w=0$ at $(p, t)\in \mathcal{H}\times (0, \infty)$. If $\nabla_Hw(p, t)=0$, we have $p_1=p_2=0$ by solving a linear system 
\[
\left(\begin{array}{cc}K & -16p_3\\ 16p_3 & K\end{array}\right)
\left(\begin{array}{c} p_1\\ p_2\end{array}\right)=
\left(\begin{array}{c} 0\\ 0\end{array}\right)
\]
with 
\[
\det\left(\begin{array}{cc}K & -16p_3\\ 16p_3 & K\end{array}\right)=K^2+16p_3^2>0.
\]
In addition, 
\[
(\nabla_H^2 w)^\ast=\left(\begin{array}{cc}
24t & 0\\
0 & 24t
\end{array}\right).
\]
Note that, by Proposition \ref{maximality}, it is not possible to take a smooth function $\phi$ touching $w$ from above at $(p, t)$ with 
\begin{equation}\label{testing condition}
\nabla_H\phi(p, t)=0 \text{ and } (\nabla_H^2\phi)^\ast(p, t)=0.
\end{equation} 
Therefore $w$ is  a subsolution of (\ref{mcf1}) at $(p, t)$ by Definition \ref{trad defi}. On the other hand, whenever a test function $\phi$ touches $w$ from below at $(p, t)$ with (\ref{testing condition}), we get $\phi_t(p, t)=w_t(p, t)=24t>0$, which implies that $w$ is also a supersolution due to Definition \ref{trad defi}.
\end{rmk}
\begin{rmk}\label{rmk tran free}
A basic transformation keeps the solution (\ref{explicit solution}) being a solution. To be more precise, for any fixed $c\in\mathbb{R}$, $L>0$ and $\hat{p}\in \mathcal{H}$, we define $\hat{w}(p, t)=Lw(\hat{p}^{-1}\cdot p, t)+c$ for all $(p, t)\in \mathcal{H}\times [0, \infty)$. Then we claim that $\hat{w}$ is a solution of (\ref{mcf1}). Indeed, our calculation above extends to
\[
\begin{aligned}
&X_1^2 \hat{w}=X_2^2\hat{w}=12L(p_1-\hat{p}_1)^2+12L(p_2-\hat{p}_2)^2+24Lt;\\
&X_1X_2 \hat{w}=-X_2X_1\hat{w}=16L(p_3-\hat{p}_3-{1\over 2}\hat{p}_1p_2+{1\over 2}p_1\hat{p}_2).
\end{aligned}
\]
The conclusion follows immediately as in the proof of Proposition \ref{prop explicit solution}.
\end{rmk}

A primary and geometric observation for the explicit solution $u$ in (\ref{explicit solution}) is as follows. For any fixed $\mu>0$, the $\mu$-level set,
\[
\Gamma^\mu_t=\{p\in \mathcal{H}: w(p, t)=\mu\}
\]
describes the position of surface at time $t\geq 0$. It is obvious that even if $\Gamma^\mu_0\neq \emptyset$, $\Gamma_t$ will vanish when $t$ is sufficiently large, which agrees with the usual extinction of mean curvature flows. We will revisit this property in Section \ref{sec extinction}.

A natural question now is whether the explicit solution we found is the only solution of (MCF) with the initial data (\ref{gauge}). This is related to the open question on the uniqueness of solutions of (MCF). In the following sections we will give an affirmative answer for the case when the initial data are cylindrically symmetric about the vertical axis.


\section{Comparison principle}\label{sec comp}

\subsection{Cylindrically symmetric solutions}
Before presenting the proof of Theorem \ref{comparison theorem}, let us investigate the properties for the solutions of (MCF) that are axisymmetric with respect to the vertical axis; in other words, we consider solutions of the form $u=u(r, z, t)$ where $r=(x^2+y^2)^{1/2}$. 

\begin{lem}[Tests for axisymmetric solutions]\label{lem axis-test}
Let $u$ be a subsolution (resp., supersolution) of (\ref{mcf}). Suppose that there exists $(\hat{p}, \hat{t})\subset \mathcal{H}\times (0, \infty)$ and $\phi\in C^2(\mathcal{O})$ such that 
\[
\max_{\mathcal{O}} (u-\phi)=(u-\phi)(\hat{p}, \hat{t})\quad   (\text{resp., } \min_{\mathcal{O}} (u-\phi)=(u-\phi)(\hat{p}, \hat{t})).
\]
If $\hat{p}=(\hat{p}_1, \hat{p_2}, \hat{p_3})$ satisfies $\hat{p}_1^2+\hat{p}_2^2\neq 0$ and $u$ is axisymmetric about the vertical axis, then there exists $k\in \mathbb{R}$ such that 
\begin{equation}\label{normal test}
{\partial\over\partial p_1}\phi(\hat{p}, \hat{t})=\hat{p_1} k \ \text{ and }\ {\partial\over\partial p_2}\phi(\hat{p}, \hat{t})=\hat{p_2}k.
\end{equation}
\end{lem}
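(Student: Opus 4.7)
The plan is to exploit the rotational invariance of $u$ about the vertical axis. Let $R_\theta$ denote the rotation by angle $\theta$ in the $(p_1,p_2)$-plane fixing $p_3$, so
\[
R_\theta(p_1,p_2,p_3) = (p_1\cos\theta - p_2\sin\theta,\ p_1\sin\theta + p_2\cos\theta,\ p_3).
\]
The axisymmetry assumption (\ref{axis}) is precisely that $u(R_\theta p, t) = u(p, t)$ for every $\theta \in \mathbb{R}$, $p \in \mathcal{H}$, $t \geq 0$. I will treat the subsolution case; the supersolution case is symmetric.

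Since $(\hat p, \hat t)$ is a maximizer of $u - \phi$ on $\mathcal{O}$ and $\mathcal{O}$ is open, there exists $\delta > 0$ so that $R_\theta \hat p \in \mathcal{O}$ for every $|\theta| < \delta$. Define the scalar function
\[
f(\theta) := u(R_\theta \hat p, \hat t) - \phi(R_\theta \hat p, \hat t), \qquad |\theta| < \delta.
\]
By the maximality of $(u-\phi)(\hat p, \hat t)$ we have $f(\theta) \leq f(0)$ for all such $\theta$; and by axisymmetry, $u(R_\theta \hat p, \hat t) = u(\hat p, \hat t)$ is independent of $\theta$. Therefore $\theta \mapsto \phi(R_\theta \hat p, \hat t)$ attains a minimum at $\theta = 0$, so its derivative at $0$ vanishes.

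Computing $\left.\tfrac{d}{d\theta}\right|_{\theta=0} R_\theta \hat p = (-\hat p_2, \hat p_1, 0)$ and applying the chain rule yields
\[
-\hat p_2\,\tfrac{\partial}{\partial p_1}\phi(\hat p, \hat t) + \hat p_1\,\tfrac{\partial}{\partial p_2}\phi(\hat p, \hat t) = 0.
\]
This says the Euclidean horizontal gradient $\big(\tfrac{\partial}{\partial p_1}\phi, \tfrac{\partial}{\partial p_2}\phi\big)(\hat p, \hat t)$ is parallel to the nonzero vector $(\hat p_1, \hat p_2)$. Hence there is a scalar $k \in \mathbb{R}$ (taking $k=0$ if the gradient vanishes) such that (\ref{normal test}) holds.

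I do not anticipate a real obstacle: the argument is a one-line consequence of rotational invariance, and the hypothesis $\hat p_1^2 + \hat p_2^2 \neq 0$ is exactly what is needed to convert the single linear relation $\hat p_1 \partial_{p_2}\phi = \hat p_2 \partial_{p_1}\phi$ into the parametrized form (\ref{normal test}). The only small caveat is to ensure that $R_\theta \hat p$ stays in $\mathcal{O}$ for small $\theta$, which follows from openness of $\mathcal{O}$.
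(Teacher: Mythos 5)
Your proof is correct and is essentially the same argument as the paper's: the paper restricts $\phi$ to the circle $\{p_1^2+p_2^2=\hat r^2\}$ and invokes Lagrange multipliers, while you parametrize that circle by the rotation angle $\theta$ and differentiate at $\theta=0$, which is just the direct derivation of the Lagrange multiplier condition in this one-constraint setting. Both yield the single tangential relation $-\hat p_2\,\partial_{p_1}\phi+\hat p_1\,\partial_{p_2}\phi=0$ and then use $\hat p_1^2+\hat p_2^2\neq 0$ to write the gradient as $k(\hat p_1,\hat p_2)$.
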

\begin{rmk}
It is clear that $k={\partial\over\partial r}\phi(\sqrt{\hat{p}_1^2+\hat{p}_2^2}, \hat{p}_3, \hat{t})$ provided that $\phi=\phi(r, p_3, t)$, i.e., $\phi$ is also axisymmetric about the vertical axis. 
\end{rmk}
\begin{proof}
Denote $\hat{r}=\sqrt{\hat{p}_1^2+\hat{p}_2^2}$. We only prove the situation when $u$ is a subsolution. By the symmetry of $u$, $u(p_1, p_2, \hat{p}_3, \hat{t})=u(\hat{p}_1, \hat{p}_2, \hat{p}_3, \hat{t})$ for all $p_1^2+p_2^2=\hat{r}^2$. By assumption, we have
\[
(u-\phi)(p_1, p_2, \hat{p}_3, \hat{t})\leq (u-\phi)(\hat{p}_1, \hat{p}_2, \hat{p}_3, \hat{t}) \text{ for all $(p_1, p_2, p_3, t)\in \mathcal{O}$,}
\]
which implies that 
\[
\phi(p_1, p_2, \hat{p}_3, \hat{t})\geq \phi(\hat{p}_1, \hat{p}_2, \hat{p}_3, \hat{t})   
\]
for all $(p_1, p_2)$ close to $(\hat{p}_1, \hat{p}_2)$ with $p_1^2+p_2^2=r^2$. Applying the method of Lagrange's multiplier, we get $k\in \mathbb{R}$ such that
\[
\begin{aligned}
&{\partial\over \partial p_1}\left(\phi(p_1, p_2, \hat{p}_3, \hat{t})-{k\over 2}(p_1^2+p_2^2-\hat{r}^2)\right)=0,\\
&{\partial\over \partial p_2}\left(\phi(p_1, p_2, \hat{p}_3, \hat{t})-{k\over 2}(p_1^2+p_2^2-\hat{r}^2)\right)=0
\end{aligned}
\]
at $(\hat{p}_1, \hat{p}_2)$. We conclude (\ref{normal test}) by straightforward calculations.

\end{proof}

\subsection{Proof of the comparison theorem}
We are now in a position to prove Theorem \ref{comparison theorem}.

\begin{proof}[Proof of Theorem \ref{comparison theorem}]
Let us assume $u$ is axisymmetric about the vertical axis. The same argument applies to the case when $v$ is axisymmetric. Suppose by contradiction that there exists $(\hat{p}, \hat{t})\in \mathcal{H}\times (0, T)$ such that 
\[
(u-v)(\hat{p}, \hat{t})>0
\]
We may assume that $(\hat{p}, \hat{t})$ satisfies 
\begin{equation}\label{max aux0}
u(\hat{p}, \hat{t})-v(\hat{p}, \hat{t})-\frac{\sigma}{T-\hat{t}}=\max_{\mathcal{H}\times [0, T)} \left(u(p, t)-v(p, t)-\frac{\sigma}{T-t}\right)=\mu>0,
\end{equation}
when $\sigma>0$ is small. 
We fix such $\sigma$, double the variables and set up an auxiliary function 
\[
\Phi^\varepsilon(p, t, q, s)=u(p, t)-v(q, s)-{1\over \varepsilon}g^2(p, q)-{1\over 2\varepsilon}(t-s)^2-\frac{\sigma}{T-t},
\]
where $g(p, q)=|p\cd q^{-1}|^4$. Let $(p^\varepsilon, t^\varepsilon, q^\varepsilon, s^\varepsilon)\in (\mathcal{H}\times [0, T))^2$ be a maximizer of $\Phi^\varepsilon$, then it is clear that 
\[
\Phi^\varepsilon(p^\varepsilon, t^\varepsilon, q^\varepsilon, s^\varepsilon)=\sup_{(\mathcal{H}\times [0, T))^2} \Phi^\varepsilon>\Phi^\varepsilon(\hat{p}, \hat{t}, \hat{p}, \hat{t}),
\]
which implies that
\begin{equation}\label{max aux}
{1\over \varepsilon}g^2(p^\varepsilon, q^\varepsilon)+{1\over 2\varepsilon}(t^\varepsilon-s^\varepsilon)^2\leq u(p^\varepsilon, t^\varepsilon)-v(q^\varepsilon, s^\varepsilon)-u(\hat{p}, \hat{t})+v(\hat{p}, \hat{t})+\frac{\sigma}{T-\hat{t}}-\frac{\sigma}{T-t^\varepsilon}.
\end{equation}
By the boundedness of $u$ and $v$, we have 
\[
|p^\varepsilon\cd (q^\varepsilon)^{-1}|\to 0 \text{ and }|t^\varepsilon-s^\varepsilon|\to 0 \text{ as $\varepsilon\to 0$.}
\]
Since $u=a$ and $v=b$ with $a\leq b$ outside $K\times [0, \infty)$, we may take a subsequence of $\varepsilon$, still indexed by $\varepsilon$, such that $p^\varepsilon, q^\varepsilon\to \ol{p}\in \mathcal{H}$ and $t^\varepsilon, s^\varepsilon\to \ol{t}\in [0, T)$ as $\varepsilon\to 0$. Sending the limit in (\ref{max aux}) and applying (\ref{max aux0}), we get
\[
\limsup_{\varepsilon\to 0}\left({1\over \varepsilon}g^2(p^\varepsilon, q^\varepsilon)+{1\over 2\varepsilon}(t^\varepsilon-s^\varepsilon)^2\right)\leq 0.
\]
In other words, we have
\begin{equation}\label{limit aux}
{1\over \varepsilon}g^2(p^\varepsilon, q^\varepsilon)\to 0\text{ and }{1\over 2\varepsilon}(t^\varepsilon-s^\varepsilon)^2\to 0 \text{ as $\varepsilon\to 0$.}
\end{equation}
We next claim that $\ol{t}\neq 0$. Indeed, if $\ol{t}=0$, then, since $u(p, 0)\leq v(p, 0)$ for all $p\in \mathcal{H}$, we are led to
\[
\Phi^\varepsilon(p^\varepsilon, t^\varepsilon, q^\varepsilon, s^\varepsilon)\to u(p, 0)-v(p, 0)-\frac{\sigma}{T}<0,
\]
which contradicts the fact that $\Phi^\varepsilon(p^\varepsilon, t^\varepsilon, q^\varepsilon, s^\varepsilon)\geq \mu$. We next apply the Crandall-Ishii lemma and get
\[
\begin{aligned}
&\left(\frac{\sigma}{(T-t^\varepsilon)^2}+{1\over \varepsilon}(t^\varepsilon-s^\varepsilon), {1\over \varepsilon}\nabla^p g^2(p^\varepsilon, q^\varepsilon), \mathcal{X}^\varepsilon\right)\in \ol{J}_H^{2, +} u(p^\varepsilon, t^\varepsilon);\\
&\left({1\over \varepsilon}(t^\varepsilon-s^\varepsilon), -{1\over\varepsilon}\nabla^q g^2(p^\varepsilon, q^\varepsilon), \mathcal{Y}^\varepsilon\right)\in \ol{J}_H^{2, -} v(q^\varepsilon, s^\varepsilon),
\end{aligned}
\]
where $\ol{J}_H^{2, +}$ and $\ol{J}_H^{2, -}$ denote the closure of the semijets in Heisenberg group and $\mathcal{X}^\varepsilon, \mathcal{Y}^\varepsilon\in \mathbf{S}^2$ satisfy
\[
\langle \mathcal{X}^\varepsilon\xi, \xi\rangle-\langle \mathcal{Y}^\varepsilon\xi, \xi\rangle\leq {C\over \varepsilon}g(p^\varepsilon, q^\varepsilon)|p^\varepsilon\cd (q^\varepsilon)^{-1}|^4|\xi|^2={C\over \varepsilon}g^2(p^\varepsilon, q^\varepsilon)|\xi|^2
\]
for some $C>0$ and all $\xi\in \mathbb{R}^2$. See \cite{B, M1} for more details on the semijets and the Crandall-Ishii lemma on the Heisenberg group. It follows from (\ref{limit aux}) that 
\begin{equation}\label{sec deri comp}
\limsup_{\varepsilon\to 0}\left(\langle \mathcal{X}^\varepsilon\xi, \xi\rangle-\langle \mathcal{Y}^\varepsilon\xi, \xi\rangle\right)\leq 0
\end{equation}
uniformly for all bounded $\xi\in \mathbb{R}^2$. Moreover, as is derived from Remark \ref{oppo grad}, the following gradient relation holds:
\[
{1\over \varepsilon}\nabla_H^p g^2(p^\varepsilon, q^\varepsilon)= -{1\over\varepsilon}\nabla_H^q g^2(p^\varepsilon, q^\varepsilon).
\]
Let $\eta^\varepsilon$ denote ${1\over \varepsilon}\nabla_H^p g(p^\varepsilon, q^\varepsilon)$.

Finally, we adopt Definition \ref{jet defi} to derive a contradiction.  

\noindent\textbf{Case A.} If $\eta^\varepsilon\neq 0$ for all $\varepsilon>0$ small, then
\begin{equation}\label{vis ineq1}
\frac{\sigma}{(T-t^\varepsilon)^2}+{1\over \varepsilon}(t^\varepsilon-s^\varepsilon)+F(\eta^\varepsilon, \mathcal{X}^\varepsilon)\leq 0
\end{equation}
and 
\begin{equation}\label{vis ineq2}
{1\over \varepsilon}(t^\varepsilon-s^\varepsilon)+F(\eta^\varepsilon, \mathcal{Y}^\varepsilon)\geq 0.
\end{equation}
Taking the difference of (\ref{vis ineq1}) and (\ref{vis ineq2}) yields
\[
\frac{\sigma}{(T-t^\varepsilon)^2}\leq \tr(I-\frac{\eta^\varepsilon\otimes \eta^\varepsilon}{|\eta^\varepsilon|^2})(\mathcal{X}^\varepsilon-\mathcal{Y}^\varepsilon).
\]
Passing to the limit as $\varepsilon\to 0$ with an application of (\ref{sec deri comp}), we end up with
\[
\frac{\sigma}{(T-\ol{t})^2}\leq 0,
\]
which is clearly a contradiction.

\noindent\textbf{Case B.} If $\eta^{\varepsilon_j}={1\over \varepsilon}\nabla_H^p g^2(p^\varepsilon, q^\varepsilon)={1\over \varepsilon}\nabla_H^q g^2(p^\varepsilon, q^\varepsilon)=0$ for a subsequence $\varepsilon_j\to 0$, we obtain, by computation, that
\begin{equation}\label{zero gradient1}
\begin{aligned}
&2g(p^\ej, q^\ej)X_1^p g(p^\ej, q^\ej)=2g(p^\ej, q^\ej)\left(\ppx g(p^\ej, q^\ej)-{p_2^\ej\over 2}\ppz g(p^\ej, q^\ej)\right)=0;\\
&2g(p^\ej, q^\ej)X_2^p g(p^\ej, q^\ej)=2g(p^\ej, q^\ej)\left(\ppy g(p^\ej, q^\ej)+{p_1^\ej\over 2}\ppz g(p^\ej, q^\ej)\right)=0
\end{aligned}
\end{equation}
and 
\begin{equation}\label{zero gradient2}
\begin{aligned}
&2g(p^\ej, q^\ej)X_1^q g(p^\ej, q^\ej)=2g(p^\ej, q^\ej)\left(\pqx g(p^\ej, q^\ej)-{q_2^\ej\over 2}\pqz g(p^\ej, q^\ej)\right)=0;\\
&2g(p^\ej, q^\ej)X_2^q g(p^\ej, q^\ej)=2g(p^\ej, q^\ej)\left(\pqy g(p^\ej, q^\ej)+{q_1^\ej\over 2}\pqz g(p^\ej, q^\ej)\right)=0.
\end{aligned}
\end{equation}
We further discuss two sub-cases. \\
\ul{Case 1.} When $g(p^\ej, q^\ej)=0$, we get $p^\ej=q^\ej$, which implies that 
\[
\nabla_H^p g(p^\ej, q^\ej)=\nabla_H^q g(p^\ej, q^\ej)=0.
\] 
Since 
\begin{equation}\label{sec deri-gg}
\begin{aligned}
&X_1^{2, p}g^2=2(X_1^p g)^2+2g X_1^{2, p} g, && X_2^{2, p}g^2=2(X_2^p g)^2+2g X_2^{2, p} g,\\
&X_1^pX_2^p g^2= 2 X_1^p gX_2^p g+2g X_1^pX_2^p g, && X_2^pX_1^p g^2=2X_2^p gX_1^p g+2g X_2^pX_1^p g,
\end{aligned}
\end{equation}
We have $(\nabla_H^{2, p} g^2)^\ast(p^\ej, q^\ej)=0$. Similarly, we can deduce $(\nabla_H^{2, q} g^2)^\ast(p^\ej, q^\ej)=0$.
By Definition \ref{trad defi}, the viscosity inequalities read
\begin{equation}\label{vis ineq3}
\frac{\sigma}{(T-t^{\varepsilon_j})^2}+{1\over {\varepsilon_j}}(t^{\varepsilon_j}-s^{\varepsilon_j})\leq 0
\end{equation}
and 
\begin{equation}\label{vis ineq4}
{1\over \varepsilon_j}(t^{\varepsilon_j}-s^{\varepsilon_j})\geq 0,
\end{equation}
whose difference implies that $\sigma/(T-t^{\varepsilon_j})^2\leq 0$. This is certainly a contradiction. \\
\ul{Case 2.} When $g(p^\ej, q^\ej)\neq 0$, we get $X_1^p g(p^\ej, q^\ej)=X_2^p g(p^\ej, q^\ej)=0$. We first claim that $p_1^\ej=p_2^\ej=0$. Suppose by contradiction that $(p_1^\ej)^2+(p_2^\ej)^2\neq 0$. In terms of Lemma \ref{lem axis-test}, there is $k\in \mathbb{R}$ such that (\ref{zero gradient1}) reduces to
\[
p_1^\ej k-{p^\ej_2\over 2}\ppz g(p^\ej, q^\ej)=0 \text{ and }p_2^\ej k+{p^\ej_1\over 2}\ppz g(p^\ej, q^\ej)=0,
\]
which yields that $k=0$ and 
\[
\ppz g(p^\ej, q^\ej) =p^\ej_3-q^\ej_3-{1\over 2}p^\ej_1q^\ej_2+{1\over 2}p^\ej_2q^\ej_1=0.
\]
It follows from (\ref{zero gradient1}), (\ref{deri-gp1}) and (\ref{deri-gp2}) that $p^\ej=q^\ej$, which contradicts the assumption that $g(p^\ej, q^\ej)\neq 0$. This completes the proof of our claim.

As $p_1^\ej=p_2^\ej=0$, we apply (\ref{zero gradient1}), (\ref{deri-gp1}) and (\ref{deri-gp2}) again and get
\[
\begin{aligned}
&4((q_1^\ej)^2+(q_2^\ej)^2)(-q_1^\ej)-16q_2^\ej(p_3^\ej-q_3^\ej)=0;\\
&4((q_1^\ej)^2+(q_2^\ej)^2)(-q_2^\ej)+16q_1^\ej(p_3^\ej-q_3^\ej)=0.
\end{aligned}
\]
We are then led to $q_1^\ej=q_2^\ej=0$. Now simplifying the second derivatives of $g^2$ in (\ref{sec deri-gg}) by using (\ref{sec deri-g1})--(\ref{sec deri-g5}), we obtain $(\nabla_H^{2, p} g^2)^\ast(p^\ej, q^\ej)=0$. An analog of calculation yields that $(\nabla_H^{2, q} g^2)^\ast(p^\ej, q^\ej)=0$. The proof is complete since Definition \ref{trad defi} can be adopted once again to get (\ref{vis ineq3})--(\ref{vis ineq4}) and deduce a contradiction.
\end{proof}

\section{Existence theorem by games}\label{sec existence}
The game setting is as follows. A marker, representing the \textit{game state}, is initialized at a state $p\in \mathcal{H}$ from time $0$.
The maturity time given is denoted by $t$. Let the step size for space be
$\varepsilon>0$. Time $\varepsilon^2$ is consumed for every step.
Then the total number of game steps $N$ can be regarded
as $\displaystyle [ t/\varepsilon^2]$. The game states for all steps are denoted in order by 
$\zeta^0, \zeta^1, \dots, \zeta^N$ with $\zeta^0=p$. Two players, Player I and Player II participate the game. Player I intends to minimize at the final state an \textit{objective function}, which in our case is $u_0:\mathcal{H}\to \mathbb{R}$, while Player II is to maximize it. At the $(k+1)$-th round ($k<N$),
\begin{itemize}
\item[(1)] Player I chooses in $\mathcal{H}$ a unit horizontal vector $v^k$, i.e., $v^k=(v^k_1, v^k_2, 0)$ satisfying $|v^k|^2=(v^k_1)^2+(v^k_2)^2=1$. We denote by $S^1_h$ the set of all unit horizontal vectors. 
\item[(2)] Carol has the right to reverse Paul's choice, which
   determines $b^k=\pm 1$;
\item[(3)] The marker is moved from the present state $\zeta^k$ to
$\zeta^k\cdot(\sqrt{2}\varepsilon b^kv^k)$.
\end{itemize}

Then the \textit{state equation} is written inductively as
\begin{equation}\label{state equation}
\left\{
\begin{aligned}
&\zeta^{k+1}=\zeta^k\cdot (\sqrt{2}\varepsilon b^kv^k), \quad k=0, 1, \ldots, N-1;\\
&\zeta^0=p.
\end{aligned}
\right.
\end{equation}
The \textit{value function} is defined to be 
\begin{equation}\label{eq:valfun1}
u^\varepsilon(p, t):=\min_{v^1}\max_{b^1}\ldots\min_{v^N}\max_{b^N}u_0(\zeta^N),
\end{equation}
By the \textit{dynamic programming}:
\begin{equation}\label{dpp}
u^\varepsilon(p, t)=\min_{v\in S^1_h}\max_{b=\pm 1}
u^\varepsilon\left(p\cdot(\sqrt{2}\varepsilon bv), t-\varepsilon^2\right)
\end{equation}
with $u^\varepsilon(p, 0)=u_0(p)$.

Our main result of this section is given below.
\begin{thm}[Existence theorem by games]\label{game approximation}
Assume that $u_0$ is uniformly continuous function in $\mathcal{H}$ and is constant $C\in \mathbb{R}$ outside a compact set. Assume also that $u_0$ is spatially axisymmetric about the vertical axis. Let $u^\varepsilon$ be the
value function defined as in (\ref{eq:valfun1}). Then $u^\varepsilon$ converges, as $\varepsilon\to 0$, to the unique axisymmetric
viscosity solution of (MCF) uniformly on compact subsets of $\mathcal{H}\times [0, \infty)$. Moreover, $u=C$ in $(\mathcal{H}\setminus K)\times (0, \infty)$ for some compact set $K\subset \mathcal{H}$.
\end{thm}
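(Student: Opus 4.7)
The plan is to follow the Barles--Souganidis scheme adapted to the deterministic Kohn--Serfaty game. I would study the upper and lower half-relaxed limits $\overline{u}$ and $\underline{u}$ from (\ref{upper limit})--(\ref{lower limit}), show that they are respectively a sub- and supersolution of (\ref{mcf1}) satisfying $\overline{u}(\cdot, 0) \leq u_0 \leq \underline{u}(\cdot, 0)$, that both are axisymmetric, and that both equal $C$ outside a common compact set, and then invoke Theorem \ref{comparison theorem} to force $\overline{u} \leq \underline{u}$. Since $\underline{u} \leq \overline{u}$ is automatic, the locally uniform convergence to $u := \overline{u} = \underline{u}$ follows from standard properties of the half-relaxed limits. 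As elementary preparation I would verify, using the min-max structure of (\ref{eq:valfun1}) and the DPP (\ref{dpp}), that $u^\varepsilon$ is uniformly bounded and spatially axisymmetric (a rotation about the $p_3$-axis preserves $S^1_h$ and commutes with the associated Heisenberg left-translations, so rotating the initial state yields the same value), and that $u^\varepsilon = C$ outside some compact set $K_t$ independent of $\varepsilon$. The last statement needs a barrier argument built from the explicit gauge solution $w$ of Proposition \ref{prop explicit solution}, combined with the invariance of Remark \ref{rmk tran free} and induction on the number of game steps.

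The main analytic step is the consistency argument. Given smooth $\phi$ such that $\overline{u} - \phi$ has a strict local maximum at $(\hat{p}, \hat{t})$, standard half-relaxed-limit arguments produce $(p^\varepsilon, t^\varepsilon) \to (\hat{p}, \hat{t})$ along which $u^\varepsilon - \phi$ is maximized. The DPP (\ref{dpp}) then yields
\[
\phi(p^\varepsilon, t^\varepsilon) \leq \min_{v \in S^1_h}\max_{b = \pm 1}\phi\bigl(p^\varepsilon \cdot (\sqrt{2}\varepsilon b v),\, t^\varepsilon - \varepsilon^2\bigr).
\]
Substituting the Heisenberg Taylor expansion (\ref{Taylor heisenberg}) for left-translation by the horizontal vector $\sqrt{2}\varepsilon b v$, the difference on the right expands as $-\varepsilon^2 \phi_t + \sqrt{2}\varepsilon b \langle \nabla_H \phi, v\rangle + \varepsilon^2 \langle (\nabla_H^2 \phi)^\ast v, v\rangle + o(\varepsilon^2)$. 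When $\nabla_H \phi(\hat{p}, \hat{t}) \neq 0$, the maximizer turns the first-order term into $\sqrt{2}\varepsilon |\langle \nabla_H \phi, v\rangle|$, which the minimizer kills to leading order by selecting the unit horizontal vector $v = v^\perp \perp \nabla_H \phi$, leaving the expected quantity $\varepsilon^2\,\tr\bigl[(I - \nabla_H \phi \otimes \nabla_H \phi/|\nabla_H \phi|^2)(\nabla_H^2 \phi)^\ast\bigr]$; dividing by $\varepsilon^2$ and sending $\varepsilon \to 0$ delivers the subsolution inequality. When $\nabla_H \phi(\hat{p}, \hat{t}) = 0$ together with $(\nabla_H^2 \phi)^\ast(\hat{p}, \hat{t}) = 0$, all leading terms vanish and the relaxed Definition \ref{trad defi} asks only for $\phi_t \leq 0$, which falls out directly. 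A mirror argument treats the supersolution property of $\underline{u}$.

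Finally, I would verify the initial comparison $\overline{u}(\cdot, 0) \leq u_0 \leq \underline{u}(\cdot, 0)$ by constructing explicit barriers: for fixed $p_0$ and $\eta > 0$, a translate and rescaling of the gauge solution $w$ of Proposition \ref{prop explicit solution} (invariant under the transformations of Remark \ref{rmk tran free}), shifted by $u_0(p_0) + \eta$, dominates $u^\varepsilon$ in a space-time neighborhood of $(p_0, 0)$; iterated application of the DPP propagates the inequality for all small $t$, and a symmetric subbarrier gives the lower estimate. With all hypotheses of Theorem \ref{comparison theorem} now in place, the comparison inequality closes the loop. The main obstacle I anticipate is the consistency analysis at points with $\nabla_H \phi(\hat{p}, \hat{t}) = 0$: at the approximating points $(p^\varepsilon, t^\varepsilon)$ the horizontal gradient is only small, not zero, so the minimax expansion must balance the first-order term against the $O(\varepsilon^2)$ Hessian contribution; the relaxed Definition \ref{trad defi} is precisely calibrated to make this balancing manageable, which is exactly why the comparison theorem was proved in that form.
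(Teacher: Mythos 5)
Your overall architecture matches the paper's: form the half-relaxed limits $\overline{u}$ and $\underline{u}$, show consistency (sub/supersolution), verify the initial ordering via barriers, establish axisymmetry and the constant-outside-a-compact-set property, and then close with Theorem~\ref{comparison theorem}. Your axisymmetry argument (rotations about the $p_3$-axis are automorphisms of $\mathcal{H}$ preserving $S^1_h$, so the value function inherits the symmetry of $u_0$) is in fact cleaner than the paper's Proposition~\ref{prop axisym}, which grinds through an induction on game steps and an explicit $2\times 2$ linear system; the two arguments are equivalent, but yours exposes the underlying structural reason more clearly. Likewise your barrier/induction argument for the constant value outside a compact set and for the initial inequalities is in the same spirit as the paper's Lemmas~\ref{traj lower bound} and~\ref{traj upper bound} combined with the sup/inf-convolution Lemma~\ref{lem conv}: both amount to propagating explicit Kor\'anyi-gauge barriers along game trajectories.

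However, there is a real gap in your treatment of consistency for the supersolution, which you dismiss as ``a mirror argument.'' It is not a mirror. For the subsolution, the DPP gives $\phi_t(p^\varepsilon,t^\varepsilon)\leq\min_v\max_b[\cdots]+o(1)$ and an \emph{upper} bound on the $\min$-$\max$ is obtained by simply inserting the horizontal direction perpendicular to $\nabla_H\phi$ --- one good $v$ suffices. For the supersolution, the DPP gives $\phi_t(p^\varepsilon,t^\varepsilon)\geq\min_v\max_b[\cdots]+o(1)$, so you need a \emph{lower} bound on $\min_v\max_b$, i.e.\ you must show that \emph{every} admissible $v$ produces a $\max_b$ value no smaller than $\langle(\nabla_H^2\phi)^\ast\tilde v_h,\tilde v_h\rangle+o(1)$. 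Inserting one $v$ does nothing here. The needed estimate is quantitative: if $v$ deviates from the perpendicular direction, the first-order term $\tfrac{\sqrt{2}}{\varepsilon}|\langle v,\nabla_H\phi\rangle|$ blows up and dominates the loss in the Hessian term, and this balance is exactly what the paper's Lemma~\ref{elementary lem} (their elementary lemma on $2\times 2$ quadratic forms) supplies: $|\langle X\xi^\perp,\xi^\perp\rangle-\langle Xv,v\rangle|\leq M|\langle\xi,v\rangle|$. Without an estimate of this form, the supersolution consistency does not close. You should also note that the obstacle you flag --- the test points where $\nabla_H\phi(\hat p,\hat t)=0$ --- is in fact the easier branch: the relaxed Definition~\ref{trad defi} forces $(\nabla_H^2\phi)^\ast(\hat p,\hat t)=0$ as well, and then all Taylor terms vanish in the limit regardless of the $v$-choice, so one falls directly onto $\phi_t\lessgtr 0$. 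The genuine technical difficulty sits in the supersolution case with $\nabla_H\phi(\hat p,\hat t)\neq 0$.
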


Before presenting the proof of Theorem \ref{game approximation}, we first give bounds for the game trajectories under some particular strategies.
\begin{lem}[Lower bound of the game trajectories]\label{traj lower bound}
For any $p\in \mathcal{H}$ and $t\geq 0$ with $N=[t/\varepsilon^2]$, let $\zeta_k$ be defined as in (\ref{state equation}) for all $k=0, 1, \dots, N$. Then the following statements hold.
\begin{enumerate}
\item[(\rmnum{1})] There exists a strategy of Player I such that 
\begin{equation}\label{traj bound1}
(|\zeta^N_1|^2+|\zeta^N_2|^2)^2+16|\zeta^N_3|^2\geq (|p_1|^2+|p_2|^2)^2+16|p_3|^2
\end{equation}
under this strategy regardless of Player II's choices.
\item[(\rmnum{2})] There exists a strategy of Player II such that (\ref{traj bound1}) holds
under this strategy regardless of Player I's choices.
\end{enumerate}
\end{lem}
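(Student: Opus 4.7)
\medskip

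\noindent\textbf{Proposal.} The plan is a one–step estimate: compute $G(\zeta^{k+1})-G(\zeta^k)$ where $G(p)=(p_1^2+p_2^2)^2+16p_3^2$, and then choose a strategy that forces this increment to be nonnegative for every $k$, regardless of what the opponent does. Summing over $k=0,\dots,N-1$ then gives (\ref{traj bound1}).

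\medskip

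\noindent First I would apply the group law in (\ref{state equation}) to write
\[
\zeta^{k+1}_1=\zeta_1^k+\sqrt{2}\varepsilon b^k v_1^k,\quad
\zeta^{k+1}_2=\zeta_2^k+\sqrt{2}\varepsilon b^k v_2^k,\quad
\zeta^{k+1}_3=\zeta_3^k+\tfrac{\sqrt2\,\varepsilon b^k}{2}\bigl(\zeta_1^kv_2^k-\zeta_2^kv_1^k\bigr).
\]
Setting $r^k=(\zeta_1^k)^2+(\zeta_2^k)^2$, $\alpha^k=v_1^k\zeta_1^k+v_2^k\zeta_2^k$ and $\beta^k=\zeta_1^kv_2^k-\zeta_2^kv_1^k$, a direct expansion, using $(b^k)^2=1$ together with the crucial identity $(\alpha^k)^2+(\beta^k)^2=r^k$ (which holds because $|v^k|=1$ and $\alpha^k,\beta^k$ are the two components of $(v_1^k,v_2^k)$ in the orthonormal frame adapted to $(\zeta_1^k,\zeta_2^k)$), should yield the clean expression
\[
G(\zeta^{k+1})-G(\zeta^k)=4\sqrt 2\,\varepsilon\,b^k A^k+12\varepsilon^2 r^k+4\varepsilon^4,
\qquad A^k:=(r^k+2\varepsilon^2)\alpha^k+4\zeta_3^k\beta^k.
\]
The point is that every term that could force $G$ to decrease is collected into the single scalar $b^k A^k$; the remainder $12\varepsilon^2 r^k+4\varepsilon^4$ is always nonnegative.

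\medskip

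\noindent For \emph{part (i)}, Player~I should kill $A^k$. Writing $A^k=v_1^k\bigl[(r^k+2\varepsilon^2)\zeta_1^k-4\zeta_3^k\zeta_2^k\bigr]+v_2^k\bigl[(r^k+2\varepsilon^2)\zeta_2^k+4\zeta_3^k\zeta_1^k\bigr]$, it suffices to choose $v^k\in S^1_h$ perpendicular to the coefficient vector in $\mathbb{R}^2$; such a unit vector always exists, and in the degenerate case where that coefficient vector vanishes, one checks (by multiplying the two vanishing equations by $\zeta_1^k,\zeta_2^k$ and adding) that $r^k=0$, which forces $\alpha^k=\beta^k=0$ and hence $A^k=0$ automatically. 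With this choice, $b^k$ no longer matters, and $G(\zeta^{k+1})-G(\zeta^k)=12\varepsilon^2 r^k+4\varepsilon^4\ge 0$. For \emph{part (ii)}, the strategy is even simpler: after seeing $v^k$, Player~II chooses $b^k=\operatorname{sign}(A^k)\in\{\pm 1\}$ (with $b^k=+1$ if $A^k=0$), so that $4\sqrt{2}\,\varepsilon\,b^kA^k=4\sqrt{2}\,\varepsilon|A^k|\ge 0$ and again the increment is nonnegative.

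\medskip

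\noindent I do not expect any serious obstacle beyond keeping the algebra straight; the only nontrivial input is the orthogonal decomposition identity $(\alpha^k)^2+(\beta^k)^2=r^k$, which is exactly what cancels the $8\varepsilon^2(\alpha^k)^2+8\varepsilon^2(\beta^k)^2$ cross terms and produces the nonnegative residual $12\varepsilon^2 r^k+4\varepsilon^4$. Once the clean decomposition is in hand, both strategies are forced upon one by the structure, and telescoping the inequality $G(\zeta^{k+1})\ge G(\zeta^k)$ from $k=0$ to $k=N-1$ finishes the proof.
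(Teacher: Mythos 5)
Your proposal is correct and follows essentially the same route as the paper: expand the one-step gauge increment, isolate the single scalar $b^k A^k$ (the paper writes out $A^k=(r^k+2\varepsilon^2)(p_1v_1+p_2v_2)+4(p_1p_3v_2-p_2p_3v_1)$ instead of using $\alpha^k,\beta^k$), choose $v^k$ orthogonal to its coefficient vector in part (i) and $b^k=\operatorname{sign}(A^k)$ in part (ii), and iterate. Your treatment of the degenerate case $r^k=0$ (where the paper's explicit normalizing factor $\rho$ vanishes) is slightly cleaner, but it is the same argument.
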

\begin{proof}
(i) By direct calculation, we have 
\begin{equation}\label{gauge iteration1}
\begin{aligned}
&((p_1+\sqrt{2}\varepsilon bv_1)^2+(p_2+\sqrt{2}\varepsilon bv_2)^2)^2+16(p_3+{1\over 2}\sqrt{2}\varepsilon b(p_1v_2-p_2v_1))^2\\
=&(p_1^2+p_2^2+2\varepsilon^2)^2+8\varepsilon^2(p_1^2+p_2^2)+16p_3^2\\
&+4\sqrt{2}\varepsilon b( (p_1^2+p_2^2+2\varepsilon^2)(p_1v_1+p_2v_2)+4(p_1p_3v_2-p_2p_3v_1))
\end{aligned}
\end{equation}
It is clear that Player I may take $v=(v_1, v_2, 0)\in S^1_h$ satisfying 
\[
\begin{aligned}
&v_1={1\over \rho}((p_1^2+p_2^2+2\varepsilon^2)p_2+4p_1p_3);\\
&v_2=-{1\over \rho}((p_1^2+p_2^2+2\varepsilon^2)p_1+4p_2p_3),
\end{aligned}
\]
with 
\[
\rho=(p_1^2+p_2^2)^{1/2}((p_1^2+p_2^2+2\varepsilon^2)^2+16p_3^2)^{1/2}
\]
so that, no matter which $b$ is picked, we have
\[
b( (p_1^2+p_2^2+2\varepsilon^2)(p_1v_1+p_2v_2)+4(p_1p_3v_2-p_2p_3v_1))=0
\]
and, furthermore by (\ref{gauge iteration1}), 
\begin{equation}\label{gauge iteration2}
\begin{aligned}
&((p_1+\sqrt{2}\varepsilon bv_1)^2+(p_2+\sqrt{2}\varepsilon bv_2)^2)^2+16(p_3+{1\over 2}\sqrt{2}\varepsilon b(p_1v_2-p_2v_1))^2\\
=&(p_1^2+p_2^2+2\varepsilon^2)^2+8\varepsilon^2(p_1^2+p_2^2)+16p_3^2\geq (p_1^2+p_2^2)^2+16p_3^2.
\end{aligned}
\end{equation}
We can iterate (\ref{gauge iteration2}) to get 
\[
(|\zeta^{k}_1|^2+|\zeta^k_2|^2)^2+16|\zeta^k_3|^2\geq (|\zeta^{k-1}_1|^2+|\zeta^{k-1}_2|^2)^2+16|\zeta^{k-1}_3|^2
\]
for all $k=1, 2, \dots, N$ and (\ref{traj bound1}) follows easily. \\
(\rmnum{2}) The proof of (\rmnum{2}) is similar and even easier. Note that Player II may take a proper $b=\pm1$ so that
\[
b( (p_1^2+p_2^2+2\varepsilon^2)(p_1v_1+p_2v_2)+4(p_1p_3v_2-p_2p_3v_1))\geq 0
\]
and therefore (\ref{gauge iteration2}) holds immediately. We then complete the proof by iteration again.
\end{proof}

\begin{lem}[Upper bound of the game trajectories]\label{traj upper bound}
For any $p\in \mathcal{H}$ and $t\geq 0$ with $N=[t/\varepsilon^2]$, let $\zeta^k$ be defined as in (\ref{state equation}) for all $k=0, 1, \dots, N$. Then the following statements hold.
\begin{enumerate}
\item[(\rmnum{1})] There exists a strategy of Player I such that 
\begin{equation}\label{traj bound2}
(|\zeta^N_1|^2+|\zeta^N_2|^2)^2+16|\zeta^N_3|^2\leq (|p_1|^2+|p_2|^2+6N\varepsilon^2)^2+16|p_3|^2
\end{equation}
under this strategy regardless of Player II's choices.
\item[(\rmnum{2})] There exists a strategy of Player II such that (\ref{traj bound2}) holds
under this strategy regardless of Player I's choices.
\end{enumerate}
\end{lem}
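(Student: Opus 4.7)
The plan is to adapt the potential-function approach of Lemma \ref{traj lower bound}, but with a \emph{time-shifted} potential that directly encodes the target bound (\ref{traj bound2}). Set $R_k := (\zeta^k_1)^2 + (\zeta^k_2)^2$ and consider
\[
\Psi^k := (R_k + 6(N-k)\varepsilon^2)^2 + 16(\zeta^k_3)^2, \qquad k = 0, 1, \ldots, N.
\]
By construction $\Psi^0$ equals the right-hand side of (\ref{traj bound2}) and $\Psi^N$ equals the left-hand side, so (\ref{traj bound2}) reduces to proving the monotonicity $\Psi^{k+1} \leq \Psi^k$ under each of the two strategies.

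The next step is a one-step expansion, parallel to the derivation of (\ref{gauge iteration1}). Writing $\alpha := \zeta^k_1 v_1 + \zeta^k_2 v_2$ and $\beta := \zeta^k_1 v_2 - \zeta^k_2 v_1$ (so $\alpha^2+\beta^2 = R_k$), combining the updates $R_{k+1} = R_k + 2\sqrt{2}\varepsilon b\alpha + 2\varepsilon^2$ and $\zeta^{k+1}_3 = \zeta^k_3 + \tfrac{\sqrt{2}\varepsilon}{2} b\beta$, and setting $\tau_k := 6(N-k)\varepsilon^2$, a short algebraic manipulation should yield
\[
\Psi^{k+1} - \Psi^k = -8\varepsilon^2 \tau_k + 16\varepsilon^4 + 4\sqrt{2}\,\varepsilon\, b \, Y, \quad \text{where } Y := \alpha(R_k + \tau_k - 4\varepsilon^2) + 4\zeta^k_3 \beta.
\]
For every $k \leq N-1$ one has $\tau_k \geq 6\varepsilon^2$, so the $b$-free part is bounded above by $-32\varepsilon^4 < 0$; only the linear term in $b$ has to be neutralized.

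For part (i), observe that $v \mapsto (\alpha,\beta)$ is $\sqrt{R_k}$ times an orthogonal transformation, so $(\alpha,\beta)$ sweeps out the entire circle of radius $\sqrt{R_k}$ as $v$ ranges over $S^1_h$. The vector $(R_k + \tau_k - 4\varepsilon^2,\, 4\zeta^k_3)$ is nonzero because its first component is at least $2\varepsilon^2$, so Player I can choose $v$ (by a formula analogous to the one in the proof of Lemma \ref{traj lower bound}, with $R_k + 2\varepsilon^2$ replaced by $R_k + \tau_k - 4\varepsilon^2$) to make $(\alpha,\beta)$ perpendicular to it, forcing $Y = 0$ and hence $\Psi^{k+1} \leq \Psi^k - 32\varepsilon^4$ no matter what $b$ Player II selects. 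For part (ii), Player II, having observed $v$ and therefore $Y$, simply picks $b = -\mathrm{sgn}(Y)$ (taking $b = \pm 1$ arbitrarily when $Y = 0$); this forces $bY \leq 0$ and again $\Psi^{k+1} \leq \Psi^k - 32\varepsilon^4$. Iterating from $k = 0$ to $k = N-1$ then yields $\Psi^N \leq \Psi^0$, which is exactly (\ref{traj bound2}).

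The step I expect to require the most thought is identifying the correct potential. The naive choice $\Phi^k := R_k^2 + 16(\zeta^k_3)^2$, combined with the strategy from Lemma \ref{traj lower bound}, gives only $\Phi^{k+1} = \Phi^k + 12\varepsilon^2 R_k + 4\varepsilon^4$; because the decomposition of $\Phi^k$ into $R_k$ and $(\zeta^k_3)^2$ is not preserved in a useful way, one is then forced to estimate $R_k \leq \sqrt{\Phi^k}$, producing only the weaker bound $\sqrt{\Phi^N} \leq \sqrt{\Phi^0} + 6N\varepsilon^2$, which strictly exceeds (\ref{traj bound2}) whenever $p_3 \neq 0$. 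The time shift $6(N-k)\varepsilon^2$ in $\Psi^k$ is precisely the ``credit'' needed so that the cross term $12\varepsilon^2 R_k$ in $(R_k+\tau_k)^2 - (R_k+\tau_k-4\varepsilon^2)^2$ absorbs the linear growth without any separate estimation of $R_k$ in terms of $\Phi^k$.
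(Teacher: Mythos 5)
Your proof is correct and is essentially the approach the paper takes: the paper establishes the one-step inequality $\bigl(R'+j\varepsilon^2\bigr)^2+16(z')^2\leq\bigl(R+(j+6)\varepsilon^2\bigr)^2+16z^2$ and chains it with $j$ running from $0$ up to $6(N-1)$, which is precisely your monotonicity $\Psi^{k+1}\leq\Psi^k$ for the time-shifted potential $\Psi^k=(R_k+6(N-k)\varepsilon^2)^2+16(\zeta^k_3)^2$. One small point in your favor: your cross-term coefficient $R_k+\tau_k-4\varepsilon^2$ is the correct one, whereas the paper's displayed one-step expansion drops the $j$ in that factor and writes $R+2\varepsilon^2$ — this is harmless for Player II's sign choice and for the final estimate, but Player I's null direction must be taken perpendicular to $(R_k+\tau_k-4\varepsilon^2,\,4\zeta^k_3)$ as you have it, not to the vector inherited verbatim from Lemma \ref{traj lower bound}.
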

\begin{rmk}
With the notation of the gauge $G$ in (\ref{gauge}), the inequality (\ref{traj bound2}) can be simplified into
\[
G(\zeta^N)\leq (|p_1|^2+|p_2|^2+6t)^2+16|p_3|^2,
\]
which is intuitively natural, since the explicit solution given in (\ref{explicit solution}) satisfies
\[
w(p, t)\leq (|p_1|^2+|p_2|^2+6t)^2+16|p_3|^2.
\]
\end{rmk}
\begin{proof}
By iteration, it suffices to show there exist strategies of Player I or Player II such that
\begin{equation}\label{gauge iteration3}
\begin{aligned}
&((p_1+\sqrt{2}\varepsilon bv_1)^2+(p_2+\sqrt{2}\varepsilon bv_2)^2+j\varepsilon^2)^2\\
&+16(p_3+{1\over 2}\sqrt{2}\varepsilon b(p_1v_2-p_2v_1))^2
\leq (p_1^2+p_2^2+(j+6)\varepsilon^2)^2+16p_3^2.
\end{aligned}
\end{equation}
Indeed, the left hand side is calculated to be
\[
\begin{aligned}
&(p_1^2+p_2^2+(j+2)\varepsilon^2)^2+8\varepsilon^2(p_1^2+p_2^2)+16p_3^2\\
&+4\sqrt{2}\varepsilon b( (p_1^2+p_2^2+2\varepsilon^2)(p_1v_1+p_2v_2)+4(p_1p_3v_2-p_2p_3v_1))
\end{aligned}
\]
As in the proof of Lemma \ref{traj upper bound}, either Player I or Player II may let 
\[
b( (p_1^2+p_2^2+2\varepsilon^2)(p_1v_1+p_2v_2)+4(p_1p_3v_2-p_2p_3v_1))\leq 0
\]
with no regard for their opponents strategies. Hence, by a strategy of either Player I or Player II, we have
\[
\begin{aligned}
((p_1+\sqrt{2}\varepsilon bv_1)^2+&(p_2+\sqrt{2}\varepsilon bv_2)^2+j\varepsilon^2)^2+16(p_3+{1\over 2}\sqrt{2}\varepsilon b(p_1v_2-p_2v_1))^2\\
\leq &(p_1^2+p_2^2+(j+2)\varepsilon^2)^2+8\varepsilon^2(p_1^2+p_2^2)+16p_3^2\\
\leq &(p_1^2+p_2^2+(j+6)\varepsilon^2)^2+16p_3^2,
\end{aligned}
\]
which proves (\ref{gauge iteration3}).
\end{proof}
\begin{rmk}\label{rmk upper bound}
For any $\hat{p}\in \mathcal{H}$, $c\in \mathbb{R}$ and $L>0$, let 
\begin{equation}\label{gauge tran}
\hat{G}(p)=c+LG(\hat{p}^{-1}\cdot p).
\end{equation}
Our proof above can be directly generalized to show that
\[
\hat{G}(\zeta^N)\leq c+L(|p_1-\hat{p}_1|^2+|p_2-\hat{p}_2|^2+6N\varepsilon^2)^2+16L|p_3-\hat{p}_3+{1\over 2}(p_1\hat{p}_2-p_2\hat{p}_1)|^2
\]
with either a strategy of Player I or a strategy of Player II. 
\end{rmk}

We now return to the proof of Theorem \ref{game approximation}, which actually rests on showing that $\ol{u}$ and $\ul{u}$, as defined in (\ref{upper limit}) and (\ref{lower limit}), are respectively a subsolution and a supersolution of (MCF). (Note that our definitions are valid since the game value $u_\varepsilon$ are bounded uniformly for all $\varepsilon>0$ by its definition.) Moreover, we show that $\ol{u}(p, 0)\leq \ul{u}(p, 0)$ and $\ol{u}$ and $\ul{u}$ are constant outside a compact set. Then it follows immediately from the comparison principle (Theorem \ref{comparison theorem}) that $\ol{u}\leq \ul{u}$ and therefore $u^\varepsilon\to u$ locally uniformly as $\varepsilon\to 0$. 

\begin{prop}[Constant value outside a compact set]\label{prop const outside}
Assume that $u_0$ is uniformly continuous function in
$\mathcal{H}$ and is a constant $C\in \mathbb{R}$ outside a compact set. Let $u^\varepsilon$ be the
value function defined by (\ref{eq:valfun1}). Then for any $T>0$, $\ol{u}(p, t)=\ul{u}(p, t)=C$ for all $p\in \mathcal{H}$ outside a compact set and for all $t\in [0, T]$.
\end{prop}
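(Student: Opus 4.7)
The idea is that Remark \ref{rmk upper bound} furnishes a finite-propagation estimate in the Kor\'anyi metric: starting from $p$ and running the game for $N=[t/\varepsilon^2]$ steps, the state can be forced to stay in a fixed Kor\'anyi ball around $p$ regardless of the opponent's play. If $p$ lies far enough from the compact set outside which $u_0\equiv C$, the trajectory can never reach that set, and $u^\varepsilon(p,t)$ is pinned to $C$.

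First I would invoke Remark \ref{rmk upper bound} with $\hat{p}=p$, $c=0$ and $L=1$, so that the right-hand side of the displayed inequality there collapses to $(6N\varepsilon^2)^2+0$. This yields
\[
G(p^{-1}\cd\zeta^N)=|p^{-1}\cd\zeta^N|^4\leq 36N^2\varepsilon^4\leq 36T^2,
\]
and hence $d(\zeta^N,p)\leq\sqrt{6T}$ both under a suitable strategy of Player~I (whatever Player~II plays) and under a suitable strategy of Player~II (whatever Player~I plays).

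Next, pick a compact $K_0\subset\mathcal{H}$ with $u_0\equiv C$ on $\mathcal{H}\setminus K_0$, and set
\[
K:=\{q\in\mathcal{H}:\dist(q,K_0)\leq 2\sqrt{6T}\},
\]
where $\dist$ denotes Kor\'anyi distance to $K_0$. Since any Kor\'anyi-bounded set is Euclidean-bounded, $K$ is compact. For any $p\notin K$ and any state $\zeta$ with $d(\zeta,p)\leq\sqrt{6T}$, the left-invariant triangle inequality gives $\dist(\zeta,K_0)>\sqrt{6T}>0$; in particular the entire game trajectory starting at $p$ (under either of the two strategies above) lies outside $K_0$, so $u_0(\zeta^N)=C$ identically.

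Plugging this into the definition (\ref{eq:valfun1}), Player~I's strategy forces $u^\varepsilon(p,t)\leq C$ from above and Player~II's strategy forces $u^\varepsilon(p,t)\geq C$ from below, giving $u^\varepsilon(p,t)=C$ for every $\varepsilon>0$, every $t\in[0,T]$ and every $p\notin K$. Since this equality is uniform in $\varepsilon$ and every $p\notin K$ has a Euclidean neighborhood still contained in $\mathcal{H}\setminus\{\dist(\cdot,K_0)\leq\sqrt{6T}\}$, passing to the half-relaxed limits (\ref{upper limit})--(\ref{lower limit}) yields $\ol{u}(p,t)=\ul{u}(p,t)=C$ on $\mathcal{H}\setminus K$. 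The main (and really only) point requiring care is that the $\hat{p}$-dependent strategies coming out of Remark \ref{rmk upper bound} are genuine non-anticipating game strategies, which is immediate once one inspects the inductive step in that remark's proof.
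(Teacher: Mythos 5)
Your proposal is correct, but it runs along a genuinely different route than the paper's. The paper's own proof invokes Lemma \ref{traj lower bound}: under suitable strategies the Kor\'anyi gauge $G(\zeta^k)$ is nondecreasing in $k$, so if $u_0\equiv C$ outside the (Kor\'anyi) ball $B_r$ centered at the origin with $K_0\subset B_r$, then starting from $p\notin B_r$ the trajectory never re-enters $B_r$, giving $u^\varepsilon(p,t)=C$ on $\mathcal{H}\setminus B_r$ for \emph{all} $t\ge 0$. That approach yields a $T$-independent compact set, exploiting the fact that the test function $G$ is well adapted to the motion (its level sets can only expand under the game). You instead use Lemma \ref{traj upper bound} together with Remark \ref{rmk upper bound}, specialized to $\hat p=p$, which is a genuine finite-propagation estimate: the state stays within Kor\'anyi distance $\sqrt{6T}$ of the start under an appropriate strategy of either player. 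This forces the trajectory to miss $K_0$ whenever $p$ is more than $2\sqrt{6T}$ away, so $u^\varepsilon=C$ there. Your compact set therefore grows with $T$ (which is all the statement requires), and the argument is conceptually more robust since it does not depend on a special monotone quantity centered at the origin; it would apply verbatim if $K_0$ were replaced by a different set or if the special role of the origin were broken. Two small remarks: (\rmnum{1}) you only need $\zeta^N\notin K_0$, not the whole trajectory, since $u_0$ is evaluated only at the terminal state; (\rmnum{2}) when passing to the half-relaxed limits, the supremum/infimum involve times $s<t+\delta\le T+\delta$, so one should either shrink $\delta<1$ and enlarge $K$ using $T+1$ in place of $T$, or take the buffer a bit wider than $2\sqrt{6T}$ — this is a cosmetic fix and does not affect the argument.
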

\begin{proof}
Suppose there exists $B_r$ such that $u_0(p)=C$ for any $p\in \mathcal{H}\setminus B_r$.
Then for any $\hat{p}\in \mathcal{H}\setminus B_r$ and $t\geq 0$, we use the strategy of Player I introduced in Lemma \ref{traj lower bound}, we get $\zeta^N\in \mathcal{H}\setminus B_r$ regardless of Player II's choices, which implies that
\[
u^\varepsilon(p, t)\leq u_0(\zeta^N)=C.
\]
Similarly, we may use the strategy of Player II to deduce that
\[
u^\varepsilon(p, t)\geq C.
\]
Hence, $u^\varepsilon= C$ and $\ol{u}=\ul{u}=C$ in $\mathcal{H}\setminus B_r$.
\end{proof}

\begin{prop}\label{prop initial data}
Assume that $u_0$ is uniformly continuous function in $\mathcal{H}$ and is constant outside a compact set. Let $u^\varepsilon$ be the value function defined by (\ref{eq:valfun1}). Then $\ol{u}(p, 0)\leq u_0(p)$ and $\ul{u}(p, 0)\geq u_0(p)$ for all $p\in \mathcal{H}$.
\end{prop}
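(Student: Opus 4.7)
The proof rests on the game-trajectory bounds from Lemma \ref{traj upper bound} and Remark \ref{rmk upper bound}. I will prove $\overline{u}(p,0)\le u_0(p)$ in detail; the lower bound $\underline{u}(p,0)\ge u_0(p)$ is obtained by an entirely symmetric argument using Player II's strategy in place of Player I's. The plan is to fix $p\in\mathcal{H}$ and $\sigma>0$ and show that $u^\varepsilon(q,s)\le u_0(p)+\sigma$ whenever $(q,s,\varepsilon)$ is sufficiently close to $(p,0,0)$. Passing to the half-relaxed limit (\ref{upper limit}) and then sending $\sigma\to 0$ will then finish the argument.

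Since $u_0$ is uniformly continuous on $\mathcal{H}$, there exists $\eta>0$ such that $|u_0(r')-u_0(r)|<\sigma$ whenever $d(r,r')<\eta$ in the Kor\'anyi metric. Apply Remark \ref{rmk upper bound} with $\hat{p}=q$ to a game starting at $\zeta^0=q$ with maturity $s$ and step size $\varepsilon$: Player I possesses a strategy, depending on $q$, that ensures
\[
G(q^{-1}\cdot\zeta^N)\le 36N^2\varepsilon^4\le 36s^2
\]
irrespective of Player II's moves. Since $G=|\cdot|^4$, this is equivalent to $d(q,\zeta^N)\le\sqrt{6s}$. (This is simply Lemma \ref{traj upper bound}(i) rephrased after left-translation by $q^{-1}$, which commutes with the game recursion in (\ref{state equation}).)

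By the triangle inequality for the Kor\'anyi metric, $d(p,\zeta^N)\le d(p,q)+\sqrt{6s}$. Because the topologies induced by $|\cdot|$ and $d$ on $\mathcal{H}$ both coincide with the Euclidean topology, one can choose $\delta>0$ small enough that the closeness condition in (\ref{upper limit}) forces $d(p,q)+\sqrt{6s}<\eta$. Consequently $u_0(\zeta^N)\le u_0(p)+\sigma$ along every sequence of Player II responses, and since Player I can guarantee this outcome, $u^\varepsilon(q,s)\le u_0(p)+\sigma$. Plugging this bound into (\ref{upper limit}) and letting $\sigma\to 0$ yields $\overline{u}(p,0)\le u_0(p)$. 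The reverse inequality follows verbatim using the Player II version of Remark \ref{rmk upper bound}, which controls $d(q,\zeta^N)$ regardless of Player I's strategy, so that Player II can guarantee $u_0(\zeta^N)\ge u_0(p)-\sigma$.

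The only mildly delicate point is verifying that Remark \ref{rmk upper bound} can be applied with $\hat{p}$ equal to the starting state $q$ of the game; beyond this elementary change of variable, the argument is immediate from the trajectory bound.
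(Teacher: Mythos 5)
Your proof is correct and takes a genuinely different, more direct route than the paper's. The paper first detours through Lemma~\ref{lem conv} on sup/inf-convolutions: it majorizes $u_0$ by a translated and scaled gauge $u_0(\hat{p})+\delta+LG(\hat{p}^{-1}\cdot p)$, compares $u^\varepsilon$ with the value $w^\varepsilon$ of the game run against this nicer objective (using monotonicity of the value in the objective), and then applies Remark~\ref{rmk upper bound} to bound $w^\varepsilon$. You instead apply Remark~\ref{rmk upper bound} directly with $\hat{p}$ set equal to the starting state $q$: the cross terms vanish, Player I confines the trajectory so that $d(q,\zeta^N)\le\sqrt{6s}$, and uniform continuity of $u_0$ closes the estimate without any convolution machinery. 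Both proofs rest on the same trajectory bound, but yours eliminates Lemma~\ref{lem conv} entirely. Two small remarks worth recording: (a) you invoke the triangle inequality for the Kor\'anyi metric $d(p,q)=|q^{-1}\cdot p|$, which does hold here (the normalizations of the group law and gauge in the paper are chosen so that $d$ is a genuine left-invariant metric), but one could equally avoid it by passing through Euclidean distances, which locally dominate and are dominated by $d$ near the fixed point $p$; (b) the uniform continuity you use is with respect to $d$, which is indeed implied by the hypotheses since $u_0$ is continuous and constant outside a compact set, hence uniformly continuous for any metric inducing the Euclidean topology. The supersolution half via Player II's strategy is symmetric and equally sound.
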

In order to prove this result, we first need to regularize the initial data with the smooth gauge $G$ in (\ref{gauge}). We define 
\begin{equation}\label{sup-convolution}
\psi^L(p)=\sup_{q\in \mathcal{H}}\{u_0(q)-LG(p^{-1}\cdot q)\} 
\end{equation}
and 
\begin{equation}\label{inf-convolution}
\psi_L(p)=\inf_{q\in \mathcal{H}}\{u_0(q)+LG(p^{-1}\cdot q)\},
\end{equation}
for any $p\in \mathcal{H}$ and fixed $L>0$.
These two functions are called the \textit{sup-convolution} and \textit{inf-convolution} of $u_0$ respectively. Our definitions here are slightly different from those in \cite{W} in that we plug $p^{-1}\cdot q$ instead of $q\cdot p^{-1}$ in $G$. However, the properties remain the same. We present one of the important properties for our use.
\begin{lem}[Approximation by semi-convolutions]\label{lem conv}
Assume that $u_0$ is uniformly continuous on $\mathcal{H}$ and is constant outside a compact set. Let $\psi^L$ and $\psi_L$ be respectively defined as in (\ref{sup-convolution}) and (\ref{inf-convolution}). Then $\psi^L$ and $\psi_L$ converge to $u_0$ uniformly in $\mathcal{H}$ as $L\to \infty$.
\end{lem}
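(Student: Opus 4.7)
The plan is to exploit uniform continuity together with the fact that the gauge penalty $LG(p^{-1}\cdot q)$ blows up as $L\to\infty$ whenever $q$ is a positive distance from $p$, so that the supremum (resp.\ infimum) defining $\psi^L$ (resp.\ $\psi_L$) is essentially attained arbitrarily close to $p$ once $L$ is large. I will treat $\psi^L$; the argument for $\psi_L$ is identical with obvious sign changes.

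First, the easy direction: choosing $q=p$ in (\ref{sup-convolution}) gives $G(p^{-1}\cdot p)=G(0)=0$ and hence $\psi^L(p)\geq u_0(p)$ for every $p\in\mathcal{H}$ and every $L>0$. Next, since $u_0$ is uniformly continuous and is constant outside a compact set, $u_0$ is bounded on $\mathcal{H}$; set $M:=\sup_{\mathcal{H}}|u_0|<\infty$ and let $\omega$ denote a modulus of continuity for $u_0$ with respect to the Kor\'anyi distance $d(p,q)=|p^{-1}\cdot q|$. Fix $\eta>0$ and pick $\delta>0$ small enough that $\omega(\delta)\leq \eta$.

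Now for arbitrary $p\in\mathcal{H}$ I will split the supremum in (\ref{sup-convolution}) into two regimes. If $d(p,q)<\delta$, then by uniform continuity
\[
u_0(q)-LG(p^{-1}\cdot q)\leq u_0(p)+\omega(\delta)\leq u_0(p)+\eta.
\]
If $d(p,q)\geq \delta$, then $G(p^{-1}\cdot q)=d(p,q)^4\geq \delta^4$, so
\[
u_0(q)-LG(p^{-1}\cdot q)\leq 2M+u_0(p)-L\delta^4-u_0(p)\leq u_0(p)+(2M-L\delta^4).
\]
Choosing $L\geq L_0:=(2M+\eta)/\delta^4$ makes the second term bounded by $u_0(p)+\eta$ as well. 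Taking the supremum in $q$ gives $\psi^L(p)\leq u_0(p)+\eta$ for all $L\geq L_0$, uniformly in $p$. Combined with $\psi^L\geq u_0$ this yields $\|\psi^L-u_0\|_{L^\infty(\mathcal{H})}\leq \eta$ for $L\geq L_0$, proving uniform convergence.

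The argument for $\psi_L$ is completely symmetric: one has $\psi_L\leq u_0$ by the choice $q=p$, and the lower bound $\psi_L(p)\geq u_0(p)-\eta$ for $L$ large follows by splitting the infimum along $\{d(p,q)<\delta\}$ and $\{d(p,q)\geq \delta\}$ as above. The main (and only mildly nontrivial) point is simply the use of boundedness together with uniform continuity to beat the penalty on the far region; no Heisenberg-specific issue intervenes because $G$ is a continuous proper function on $\mathcal{H}$ vanishing only at the origin.
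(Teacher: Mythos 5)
Your proof is correct and reaches the same conclusion, but takes a slightly different route from the paper. The paper's argument fixes $p$, picks a maximizer $q_L$ of the expression defining $\psi^L(p)$, uses the inequality $\psi^L \ge u_0$ together with boundedness of $u_0$ to derive the quantitative bound $|p^{-1}\cdot q_L| \le (2K_0/L)^{1/4}$, and then invokes uniform continuity to conclude $u_0(q_L)-u_0(p)$ is small once $L$ is large. Your version skips the maximizer entirely and instead splits the supremum over $q$ into the near regime $d(p,q)<\delta$ (controlled by uniform continuity) and the far regime $d(p,q)\ge\delta$ (controlled by the growth of the penalty $LG$ and the boundedness of $u_0$). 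The two arguments are equivalent in substance -- both rely on exactly the ingredients of boundedness and uniform continuity with respect to the Kor\'anyi gauge -- but your case split has the minor advantage of not needing to know that the supremum is actually attained, a point the paper glosses over (it is attained, because the penalty is coercive and $u_0$ is bounded and continuous, but it requires a word). One cosmetic remark: the intermediate expression ``$2M+u_0(p)-L\delta^4-u_0(p)$'' in your far-regime estimate is an awkward way of writing $2M-L\delta^4$; the cleaner statement is $u_0(q)-LG(p^{-1}\cdot q)\le M - L\delta^4 \le u_0(p) + 2M - L\delta^4$, using $u_0(p)\ge -M$.
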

\begin{proof}
We only show the statement for $\psi^L$.  The proof for the statement on $\psi_L$ is symmetric.

It is easily seen that 
\begin{equation}\label{conv1}
\psi^L\geq u_0 \text{ in $\mathcal{H}$.} 
\end{equation}
On the other hand, since $u_0$ is uniformly continuous, for any $p\in \mathcal{H}$, we may find $q_L\in \mathcal{H}$ such that
\[
\psi^L(p)=\sup_{q\in \mathcal{H}}\{u_0(p)-LG(p^{-1}\cdot q)\}=u_0(q_L)-LG(p^{-1}\cdot q_L).
\] 
By (\ref{conv1}), we have
\begin{equation}\label{conv2}
LG(p^{-1}\cdot q_L)\leq u_0(q_L)-u_0(p),
\end{equation}
which, by the boundedness of $u_0$, implies that
\[
|p^{-1}\cdot q_L|\leq (2K_0/L)^{1/4},
\]
where $K_0=\sup_{H}|u_0|$. By the uniform continuity of $u_0$, for any $\delta>0$, there exists $\varepsilon>0$ such that 
$|u_0(p)-u_0(q)|\leq \delta$ for any $p, q\in \mathcal{H}$ satisfying $|p^{-1}\cdot q|\leq \varepsilon$. Then we may let $L>0$ be sufficiently large such that $(2K_0/L)^{1/4}\leq \varepsilon$ and therefore
\[
u_0(q_L)-u_0(p)\leq \delta,
\]
which, combined with (\ref{conv1}), yields 
\[
|\psi^L(p)-u_0(p)|\leq \delta \text{ for all $p\in \mathcal{H}$}.
\]
\end{proof}

\begin{proof}[Proof of Proposition \ref{prop initial data}]
We arbitrarily fix $\hat{p}\in \mathcal{H}$. By Lemma \ref{lem conv}, for any $\delta>0$, there exists $L>0$  such that
\[
\psi^L(\hat{p})\leq u_0(\hat{p})+\delta,
\]
which implies that
\[
u_0(p)\leq u_0(\hat{p})+\delta+LG(\hat{p}^{-1}\cdot p).
\]
Let us use the right hand side, which is exactly $\hat{G}$ in (\ref{gauge tran}) with $c=u_0(\hat{p})+\delta$, as the objective function of the games. Suppose the game value is $w^\varepsilon$. Then by using the special strategy of Player I given in Lemma \ref{traj upper bound} and Remark \ref{rmk upper bound}, we obtain a game estimate
\[
\begin{aligned}
w^\varepsilon(p, t)\leq &u_0(\hat{p})+\delta+LG(\hat{p}^{-1}\cdot \zeta^N)\\
\leq& u_0(\hat{p})+\delta+L(|p_1-\hat{p}_1|^2+|p_2-\hat{p}_2|^2+6N\varepsilon^2)^2\\
 &+16L|p_3-\hat{p}_3+{1\over 2}(p_1\hat{p}_2-p_2\hat{p}_1)|^2
\end{aligned}
\]
no matter what choices are made by Player II during the game.
On the other hand, since it is clear that $u^\varepsilon\leq w^\varepsilon$ and $N\varepsilon^2\leq t$, we get
\[
\begin{aligned}
u^\varepsilon(p, t)\leq u_0(\hat{p})+\delta+L(|p_1-\hat{p}_1|^2+&|p_2-\hat{p}_2|^2+6t)^2\\
&+16L|p_3-\hat{p}_3+{1\over 2}(p_1\hat{p}_2-p_2\hat{p}_1)|^2.
\end{aligned}
\]
Taking the relaxed limit of $u^\varepsilon$ at $(\hat{p}, 0)$ as $\varepsilon\to 0$, we have
\[
\ol{u}(\hat{p}, 0)\leq u_0(\hat{p})+\delta.
\]
We finally send $\delta\to 0$ and get $\ol{u}(\hat{p}, 0)\leq u_0(\hat{p})$ for any $\hat{p}\in \mathcal{H}$.

The proof for the statement that $\ul{u}(p, 0)\geq u_0(p)$ for all $p\in \mathcal{H}$ is symmetric. In fact, the key is to use the strategy of Player II introduced in Lemma \ref{traj upper bound} and Remark \ref{rmk upper bound} to deduce
\[
\begin{aligned}
u^\varepsilon(p, t)\geq u_0(\hat{p})-\delta-L(|p_1-\hat{p}_1|^2+&|p_2-\hat{p}_2|^2+6t)^2\\
&-16L|p_3-\hat{p}_3+{1\over 2}(p_1\hat{p}_2-p_2\hat{p}_1)|^2.
\end{aligned}
\]

\end{proof}

\begin{prop}[Axial symmetry of the game values]\label{prop axisym}
Suppose that $u_0$ is uniformly continuous on $\mathcal{H}$ and is spatially axisymmetric with respect to the vertical axis. Let $u^\varepsilon$ be the value function defined as in (\ref{eq:valfun1}) Then $u^\varepsilon$, $\ol{u}$ and $\ul{u}$ are also spatially axisymmetric about the vertical axis.
\end{prop}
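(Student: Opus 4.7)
The plan is to exploit the fact that rotations about the vertical axis are automorphisms of the Heisenberg group and preserve the set $S^1_h$ of admissible horizontal directions. Once that symmetry is transferred into the dynamic programming principle, axisymmetry of $u^\varepsilon$ follows by a bijection of strategies and axisymmetry of $\ol{u}$, $\ul{u}$ follows by taking half-relaxed limits.

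First I would introduce, for each $\theta\in \mathbb{R}$, the rotation
\[
R_\theta(p_1,p_2,p_3)=(p_1\cos\theta - p_2\sin\theta,\ p_1\sin\theta+p_2\cos\theta,\ p_3)
\]
and check directly from the group law that $R_\theta(p\cdot q)=R_\theta(p)\cdot R_\theta(q)$; the only nontrivial point is that the term $p_1q_2-q_1p_2$ appearing in the third component is the $2$-dimensional cross product of the horizontal projections, hence invariant under $R_\theta$. I would also observe that $R_\theta(S^1_h)=S^1_h$, since $R_\theta$ acts as an orthogonal transformation on the horizontal plane $\{(v_1,v_2,0)\}$.

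Next, fix $p\in \mathcal{H}$, $t\geq 0$ and any $\theta$. Given any admissible sequence of choices $(v^k,b^k)_{k=1}^{N}$ for the game starting at $R_\theta(p)$, let $\tilde v^k := R_{-\theta}v^k\in S^1_h$ and play $(\tilde v^k,b^k)$ from $p$. By induction on $k$, using the automorphism property, the resulting trajectory $\zeta^k$ satisfies $R_\theta(\zeta^k)=\tilde \zeta^k$, where $\tilde\zeta^k$ is the trajectory from $R_\theta(p)$. Because $u_0$ is axisymmetric about the vertical axis, $u_0(\tilde\zeta^N)=u_0(R_\theta(\zeta^N))=u_0(\zeta^N)$. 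The map $(v^k,b^k)\mapsto (R_{-\theta}v^k,b^k)$ is a bijection between the strategy spaces, so taking the $\min$-$\max$ in the definition \eqref{eq:valfun1} yields $u^\varepsilon(R_\theta(p),t)=u^\varepsilon(p,t)$. Since this holds for every $\theta$, $u^\varepsilon$ is axisymmetric about the vertical axis.

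Finally, axisymmetry passes to the half-relaxed limits $\ol{u}$ and $\ul{u}$: for each $\theta$, the map $(q,s)\mapsto(R_\theta(q),s)$ is a homeomorphism of $\mathcal{H}\times[0,\infty)$, so the supremum and infimum in \eqref{upper limit}--\eqref{lower limit} are unchanged when the base point is rotated. I do not anticipate any real obstacle here; the whole argument hinges on the single algebraic observation that $R_\theta$ is a group automorphism, which is a direct calculation from the definition of the Heisenberg product.
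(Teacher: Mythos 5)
Your proof is correct, and it takes a genuinely different and arguably more conceptual route than the paper's. The paper proceeds by induction on the number of game steps: given two points $p,p'$ with the same horizontal radius and vertical coordinate, and a direction $v$ played at $p$, it explicitly solves a $2\times 2$ linear system to produce the matching direction $v'$ at $p'$ so that $p\cdot(\sqrt 2\varepsilon b v)$ and $p'\cdot(\sqrt 2\varepsilon b v')$ again have the same horizontal radius and vertical coordinate; it then invokes the dynamic programming principle to propagate symmetry one step at a time, and finally observes that symmetry passes to the half-relaxed limits. Your argument identifies the reason that linear system has the particular solution it does: the rotations $R_\theta$ about the vertical axis are group automorphisms of $\mathcal H$ (the third-coordinate correction $p_1q_2-q_1p_2$ is the planar cross product of the horizontal projections, hence $R_\theta$-invariant), and they preserve the admissible direction set $S^1_h$. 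Conjugating the entire game by $R_\theta$ then gives a bijection of strategies that preserves the running state and the terminal cost, yielding $u^\varepsilon(R_\theta p,t)=u^\varepsilon(p,t)$ for every $\theta$ at once — no induction over steps is needed, and the passage to $\ol u,\ul u$ is immediate because $R_\theta$ is an isometry of $\mathbb R^3$. The trade-off: the paper's computation is elementary and self-contained, while your version isolates a reusable structural fact (symmetries that are automorphisms of the group and stabilize $S^1_h$ descend to the game value), which would extend verbatim to any other such symmetry. The one place to be slightly careful in a polished write-up is the bijection-of-strategies step: since the $\min$--$\max$ in \eqref{eq:valfun1} is taken over choices that may depend on the history, you should note that the substitution $v^k\mapsto R_{-\theta}v^k$ is a history-by-history relabeling of the game tree, under which the induction $R_\theta(\zeta^k)=\tilde\zeta^k$ holds for every branch; once that is stated, the equality of values follows.
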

\begin{proof}
We argue by induction. Assume that $u^\varepsilon(p, t)=u^\varepsilon(p', t)$ for some $t\geq 0$ and for any $p, p'\in \mathcal{H}$ such that
\begin{equation}\label{axisym}
p_1^2+p_2^2=(p_1')^2+(p_2')^2 \text{ and }p_3=p_3'.
\end{equation}
We aim to show $u^\varepsilon(p, t+\varepsilon^2)=u^\varepsilon(p', t+\varepsilon^2)$ for all $p, p'\in \mathcal{H}$ satisfying the condition (\ref{axisym}).

Since the dynamic programming principle (\ref{dpp}) gives 
\[
u^\varepsilon(p, t+\varepsilon^2)=\min_{v\in S^1_h}\max_{b=\pm1} u^\varepsilon\left(p\cdot (\sqrt{2}\varepsilon bv), t\right),
\]
there exists $v\in S^1_h$ such that 
\begin{equation}\label{dpp ext}
u^\varepsilon(p, t+\varepsilon^2)= \max_{b=\pm 1}u^\varepsilon\left(p\cdot (\sqrt{2}\varepsilon bv), t\right).
\end{equation}
We claim that there is $v'\in S^1_h$ such that the coordinates of $p\cdot (\sqrt{2}\varepsilon bv)$ and $p'\cdot (\sqrt{2}\varepsilon bv')$ satisfy (\ref{axisym}) as well. Indeed, as
\[
p\cdot (\sqrt{2}\varepsilon bv)=\left(p_1+\sqrt{2}\varepsilon bv_1, p_2+\sqrt{2}\varepsilon bv_2, p_3+{1\over 2}\sqrt{2}\varepsilon b(p_1v_2-p_2v_1)\right)
\]
and 
\[
p'\cdot (\sqrt{2}\varepsilon bv')=\left(p'_1+\sqrt{2}\varepsilon bv'_1, p'_2+\sqrt{2}\varepsilon bv'_2, p'_3+{1\over 2}\sqrt{2}\varepsilon b(p'_1v'_2-p'_2v'_1)\right),
\] 
we are looking for $v'_1, v'_2\in S^1_h$ such that
\[
\left\{
\begin{aligned}
&(p'_1+\sqrt{2}\varepsilon bv'_1)^2+(p'_2+\sqrt{2}\varepsilon bv'_2)^2=(p_1+\sqrt{2}\varepsilon bv_1)^2+(p_2+\sqrt{2}\varepsilon bv_2)^2\\
&p'_3+{1\over 2}\sqrt{2}\varepsilon b(p'_1v'_2-p'_2v'_1)=p_3+{1\over 2}\sqrt{2}\varepsilon b (p_1v_2-p_2v_1).
\end{aligned}
\right.
\] 
Since $p$ and $p'$ satisfy (\ref{axisym}), it suffices to solve the linear system
\[
\left\{
\begin{aligned}
&p'_1v'_1+p'_2v'_2=p_1v_1+p_2v_2,\\
&-p'_2 v'_1+p'_1v'_2=-p_2v_1+p_1v_2.
\end{aligned}
\right.
\]
The problem is trivial if $p_1^2+p_2^2=(p'_1)^2+(p'_2)^2=0$. When $p_1^2+p_2^2=(p'_1)^2+(p'_2)^2\neq 0$, we get a unique pair of solutions
\[
\begin{aligned}
&v'_1={1\over {(p'_1)^2+(p'_2)^2}}\left((p_1p'_1+p_2p'_2)v_1+(p'_1p_2-p_1p'_2)v_2\right),\\
&v'_2={1\over {(p'_1)^2+(p'_2)^2}}\left((p_1p'_2-p'_1p_2)v_1+(p_1p'_1+p_2p'_2)v_2\right).
\end{aligned}
\]
Thanks to the relation (\ref{axisym}), it is easy to verify that $v'=(v'_1, v'_2, 0)\in S^1_h$, i.e., $(v'_1)^2+(v'_2)^2=1$. 
We complete the proof of the claim.

In view of the induction hypothesis, we obtain 
\[
u^\varepsilon\left(p'\cdot (\sqrt{2}\varepsilon bv'), t\right)=u^\varepsilon\left(p\cdot(\sqrt{2}\varepsilon bv), t\right) \text{ for both $b=\pm 1$,}
\] 
which, together with the dynamic programming (\ref{dpp}) and (\ref{dpp ext}), yields 
\[
u^\varepsilon(p', t+\varepsilon^2)\leq \max_{b=\pm 1}u^\varepsilon\left(p'\cdot (\sqrt{2}\varepsilon bv'), t\right)\leq u^\varepsilon(p, t+\varepsilon^2).
\]
We may similarly prove that $u^\varepsilon(p', t+\varepsilon^2)\geq u^\varepsilon(p, t+\varepsilon^2)$ and therefore $u^\varepsilon(p', t+\varepsilon^2)= u^\varepsilon(p, t+\varepsilon^2)$ for all $p, p'\in \mathcal{H}$ satisfying (\ref{axisym}). 

It follows from the definitions (\ref{upper limit})--(\ref{lower limit}) of half relaxed limits that the same results for $\ol{u}$ and $\ul{u}$ hold.

\end{proof}

\begin{prop}\label{game subsolution}
Assume that $u^\varepsilon$ satisfies the dynamic programming principle (\ref{dpp}).
Let $\ol{u}$ be the upper relaxed limit defined as in (\ref{upper limit}). Then $\ol{u}$ is a subsolution of (\ref{mcf1}).
\end{prop}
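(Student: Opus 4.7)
The plan is to follow the standard viscosity/DPP scheme, adapted to the Heisenberg group and to the relaxed Definition \ref{trad defi}. Let $\phi \in C^\infty(\mathcal{H}\times(0,\infty))$ be a test function and $(\hat p,\hat t)$ a strict local maximum of $\ol u-\phi$. By a classical lemma on half-relaxed limits (for instance \cite[Lemma 2.2.5]{G1}), there exist $\varepsilon_j\to 0$ and points $(p^{\varepsilon_j},t^{\varepsilon_j})\to(\hat p,\hat t)$ at which $u^{\varepsilon_j}-\phi$ attains a local maximum. From the dynamic programming principle \eqref{dpp} and the maximum property, one obtains
\[
0 \;\leq\; \min_{v\in S^1_h}\max_{b=\pm1}\Bigl[\phi\bigl(p^{\varepsilon_j}\cd(\sqrt{2}\varepsilon_j bv),\, t^{\varepsilon_j}-\varepsilon_j^2\bigr)-\phi(p^{\varepsilon_j},t^{\varepsilon_j})\Bigr].
\]

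First I would expand the right-hand side with the Heisenberg Taylor formula \eqref{Taylor heisenberg}. Since $v$ is horizontal, $(p^{\varepsilon_j})^{-1}\cd(p^{\varepsilon_j}\cd(\sqrt{2}\varepsilon_j bv))=\sqrt{2}\varepsilon_j bv$, and a short calculation (the cross terms coming from $\phi_{p_3}$ and the group law cancel) shows that the first order in $\varepsilon_j$ is exactly $\sqrt{2}\varepsilon_j b\,\langle v,\nabla_H\phi\rangle$, the horizontal Hessian term contributes $\varepsilon_j^2\langle(\nabla_H^2\phi)^\ast v,v\rangle$, and the time term contributes $-\varepsilon_j^2\phi_t$. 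Dividing by $\varepsilon_j^2$ and taking $\max_{b=\pm1}$, the inequality becomes
\[
0\;\leq\; \min_{v\in S^1_h}\Bigl[-\phi_t+\tfrac{\sqrt{2}}{\varepsilon_j}|\langle v,\nabla_H\phi\rangle|+\langle(\nabla_H^2\phi)^\ast v,v\rangle\Bigr]+o(1),
\]
where all derivatives are evaluated at $(p^{\varepsilon_j},t^{\varepsilon_j})$.

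Next, I would split into the two regimes dictated by Definition \ref{trad defi}. If $\nabla_H\phi(\hat p,\hat t)\neq 0$, then for $\varepsilon_j$ small the minimizing $v$ must annihilate $\langle v,\nabla_H\phi\rangle$ (otherwise the penalty term $\tfrac{\sqrt{2}}{\varepsilon_j}|\langle v,\nabla_H\phi\rangle|$ blows up). Choosing $v^{\varepsilon_j}$ as the unit horizontal vector orthogonal to $\nabla_H\phi(p^{\varepsilon_j},t^{\varepsilon_j})$ (available in the 2-dimensional horizontal fibre), the middle term vanishes, and using the identity $v_\perp\otimes v_\perp=I-\eta\otimes\eta/|\eta|^2$ valid in $\mathbb{R}^2$ gives
\[
\langle(\nabla_H^2\phi)^\ast v^{\varepsilon_j},v^{\varepsilon_j}\rangle=\tr\!\Bigl[\bigl(I-\tfrac{\nabla_H\phi\otimes\nabla_H\phi}{|\nabla_H\phi|^2}\bigr)(\nabla_H^2\phi)^\ast\Bigr].
\]
Passing $\varepsilon_j\to 0$ by continuity of the derivatives produces exactly $\phi_t+F(\nabla_H\phi,(\nabla_H^2\phi)^\ast)\le 0$ at $(\hat p,\hat t)$.

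If instead $\nabla_H\phi(\hat p,\hat t)=0$ and $(\nabla_H^2\phi)^\ast(\hat p,\hat t)=0$, I would consider the two subsequential possibilities: either $\nabla_H\phi(p^{\varepsilon_j},t^{\varepsilon_j})\neq 0$ (then again pick $v^{\varepsilon_j}$ perpendicular to it; the curvature term tends to $0$ by continuity because $(\nabla_H^2\phi)^\ast\to 0$ and the projector is bounded), or $\nabla_H\phi(p^{\varepsilon_j},t^{\varepsilon_j})=0$ identically (then any $v$ works and we pick one realizing the smallest eigenvalue of $(\nabla_H^2\phi)^\ast$, which also tends to $0$). In either subcase the limit yields $\phi_t(\hat p,\hat t)\le 0$, giving the subsolution condition of Definition \ref{trad defi}. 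The main obstacle is the degenerate case $\nabla_H\phi=0$, precisely the point where the choice of Definition \ref{trad defi} (rather than Definition \ref{enve defi}) is essential: it restricts the tests to functions with matching vanishing Hessian and so prevents the singular $\frac{1}{\varepsilon}$-penalty from ruining the passage to the limit. The previously proved equivalence of definitions then upgrades the conclusion to subsolution in the standard sense.
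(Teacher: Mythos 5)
Your proposal is correct and takes essentially the same route as the paper: extract a sequence of approximate maximizers via the half-relaxed-limit lemma, plug the dynamic programming principle into the maximum inequality, perform a Heisenberg Taylor expansion to isolate the $\sqrt{2}\varepsilon b\langle v_h,\nabla_H\phi\rangle$ and $\varepsilon^2\langle(\nabla_H^2\phi)^\ast v_h,v_h\rangle$ terms, then split into the two regimes $\nabla_H\phi(\hat p,\hat t)\neq 0$ (test against the horizontal unit vector orthogonal to $\nabla_H\phi$, giving the projector $I-\nabla_H\phi\otimes\nabla_H\phi/|\nabla_H\phi|^2$) and $\nabla_H\phi(\hat p,\hat t)=0$ (invoke Definition \ref{trad defi}, with $(\nabla_H^2\phi)^\ast(\hat p,\hat t)=0$, and pass to the limit along the two possible subsequential behaviors of $\nabla_H\phi(p^{\varepsilon_j},t^{\varepsilon_j})$). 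This matches the paper's proof step for step, including the appeal to the equivalence of Definitions \ref{trad defi} and \ref{enve defi}.
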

\begin{proof}
Assume that there exists $(\hat{p}, \hat{t})\in \mathcal{H}\times (0, \infty)$ and $\phi\in C^2(\mathcal{H}\times (0, \infty))$ such that $\ol{u}-\phi$ attains a strict maximum at $(\hat{p}, \hat{t})$. Then by definitions of $\ol{u}$, we may take a sequence, still indexed by $\varepsilon$, $(p^\varepsilon, t^\varepsilon)\in \mathcal{H}\times (0, \infty)$ such that $(p^\varepsilon, t^\varepsilon) \to (\hat{p}, \hat{t})$ and 
$u^\varepsilon(p^\varepsilon, t^\varepsilon)\to \ol{u}(\hat{p}, \hat{t})$ as $\varepsilon\to 0$ and 
\begin{equation}\label{max convergence}
u^\varepsilon(p^\varepsilon, t^\varepsilon)-\phi(p^\varepsilon, t^\varepsilon)= \max_{B_r(\hat{p}, \hat{t})}(u^\varepsilon-\phi)
\end{equation}
Applying the dynamic programming principle (\ref{dpp}) with $(p, t)=(p^\varepsilon, t^\varepsilon)$, we have
\[
u^\varepsilon(p^\varepsilon, t^\varepsilon)=\min_{v}\max_{b} u^\varepsilon\left(p^\varepsilon\cdot (\sqrt{2}\varepsilon bv), t^\varepsilon-\varepsilon^2\right),
\]
which, combined with (\ref{max convergence}), implies that
\[
\phi(p^\varepsilon, t^\varepsilon)\leq \min_{v}\max_{b}\phi\left(p^\varepsilon\cdot (\sqrt{2}\varepsilon bv), t^\varepsilon-\varepsilon^2\right).
\]
We next use the Taylor expansion for the right hand side at $(p^\varepsilon, t^\varepsilon)$ and obtain
\begin{equation}\label{taylor1}
\varepsilon^2 \phi_t(p^\varepsilon, t^\varepsilon)-\min_v\max_b(\langle \sqrt{2}\varepsilon bv, \nabla \phi(p^\varepsilon, t^\varepsilon)\rangle +\varepsilon^2\langle (\nabla_H^2 \phi)^\ast(p^\varepsilon, t^\varepsilon) v_h, v_h\rangle)\leq o(\varepsilon^2),
\end{equation}
where $v_h$ is the horizontal projection of $v$, i.e., $v_h=(v^1, v^2)$ for any $v=(v^1, v^2, v^3)$.
Since $v=(v^1, v^2, 0)$, we may rewrite (\ref{taylor1}) as
\begin{equation}\label{taylor2}
\varepsilon^2 \phi_t(p^\varepsilon, t^\varepsilon)-\min_v\max_b(\langle \sqrt{2}\varepsilon bv_h, \nabla_H \phi(p^\varepsilon, t^\varepsilon)\rangle +\varepsilon^2\langle (\nabla_H^2 \phi)^\ast(p^\varepsilon, t^\varepsilon) v_h, v_h\rangle)\leq o(\varepsilon^2),
\end{equation}

We discuss two cases:

\noindent\textbf{Case A:} $\nabla_H\phi(\hat{p}, \hat{t})\neq 0$. Then $\nabla_H\phi(p^\varepsilon, t^\varepsilon)\neq 0$ for all sufficiently small $\varepsilon>0$. Letting 
\[
\tilde{v}=\frac{1}{|\nabla_H \phi(p^\varepsilon, t^\varepsilon)|}(X_2\phi(p^\varepsilon, t^\varepsilon), -X_1\phi(p^\varepsilon, t^\varepsilon), 0)
\] 
with 
\[
\tilde{v}_h=\frac{1}{|\nabla_H\phi(p^\varepsilon, t^\varepsilon)|}(X_2\phi(p^\varepsilon, t^\varepsilon), -X_1\phi(p^\varepsilon, t^\varepsilon)),
\] 
we have from (\ref{taylor1})
\begin{equation}\label{sub derivation1}
\phi_t(p^\varepsilon, t^\varepsilon)-\langle (\nabla_H^2 \phi)^\ast(p^\varepsilon, t^\varepsilon) \tilde{v}_h, \tilde{v}_h\rangle)\leq o(1) .
\end{equation}
Noticing that 
\[
\tilde{v}_h\otimes \tilde{v}_h=I-\frac{\nabla_H\phi(p^\varepsilon, t^\varepsilon)\otimes \nabla_H\phi(p^\varepsilon, t^\varepsilon)}{|\nabla_H\phi(p^\varepsilon, t^\varepsilon)|^2},
\]
we are thus led from (\ref{sub derivation1}) to 
\begin{equation}\label{sub derivation2}
\phi_t(p^\varepsilon, t^\varepsilon)-\tr\left[\bigg(I-\frac{\nabla_H\phi(p^\varepsilon, t^\varepsilon)\otimes \nabla_H\phi(p^\varepsilon, t^\varepsilon)}{|\nabla_H\phi(p^\varepsilon, t^\varepsilon)|^2}\bigg)(\nabla_H^2 \phi)^\ast(p^\varepsilon, t^\varepsilon)\right]\leq o(1).
\end{equation}
Sending $\varepsilon\to 0$, we get
\[
\phi_t(\hat{p}, \hat{t})-\tr\left[\bigg(I-\frac{\nabla_H\phi(\hat{p}, \hat{t})\otimes \nabla_H\phi(\hat{p}, \hat{t})}{|\nabla_H\phi(\hat{p}, \hat{t})|^2}\bigg)(\nabla_H^2 \phi)^\ast(\hat{p}, \hat{t})\right]\leq 0.
\]

\noindent\textbf{Case B:} $\nabla_H\phi(\hat{p}, \hat{t})= 0$. In this case, we have by Definition \ref{trad defi} that 
\begin{equation}\label{sub derivation3}
\nabla_H^2\phi^\ast(\hat{p}, \hat{t})=0.
\end{equation}
If $\nabla_H\phi(p^\varepsilon, t^\varepsilon)\neq 0$ for all $\varepsilon>0$, we may follow the same argument as in Case A, passing to the limit for (\ref{sub derivation2}) as $\varepsilon\to 0$ with an application of (\ref{sub derivation3}), and get
\begin{equation}\label{sub derivation4}
\phi_t(\hat{p}, \hat{t})\leq 0,
\end{equation}
as desired. 

If there exists a subsequence $\varepsilon_j$ such that $\nabla_H\phi(p^{\varepsilon_j}, t^{\varepsilon_j})=0$ for all $j$, then it follows from (\ref{taylor1}) that 
\[
\phi_t(p^{\varepsilon_j}, t^{\varepsilon_j})-\langle (\nabla_H^2 \phi)^\ast(p^{\varepsilon_j}, t^{\varepsilon_j}) v_h, v_h\rangle)\leq o(1) \text{ for some $v$,}
\] 
which again implies (\ref{sub derivation4}) as the limit when $\varepsilon_j\to 0$.
\end{proof}

\begin{prop}\label{game supersolution}
Assume that $u^\varepsilon$ satisfies the dynamic programming principle (\ref{dpp}). 
Let $\ul{u}$ be the lower relaxed limit defined as in (\ref{lower limit}). Then $\ul{u}$ is a supersolution of (\ref{mcf1}).
\end{prop}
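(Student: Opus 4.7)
The plan is to mirror the argument of Proposition \ref{game subsolution}, with the roles of maxima/minima and the direction of all inequalities reversed. Suppose $\phi \in C^2(\mathcal{H} \times (0,\infty))$ is a test function for which $\underline{u} - \phi$ attains a strict minimum at $(\hat{p}, \hat{t})$. By the definition of $\underline{u}$ as a lower relaxed limit, there exists a sequence $(p^\varepsilon, t^\varepsilon) \to (\hat{p}, \hat{t})$ with $u^\varepsilon(p^\varepsilon, t^\varepsilon) \to \underline{u}(\hat{p}, \hat{t})$ such that $u^\varepsilon - \phi$ attains a local minimum at $(p^\varepsilon, t^\varepsilon)$. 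Applying the dynamic programming principle (\ref{dpp}) and using that $u^\varepsilon(p^\varepsilon, t^\varepsilon) - \phi(p^\varepsilon, t^\varepsilon) \leq u^\varepsilon(q, s) - \phi(q, s)$ for nearby $(q,s)$, I get the reversed inequality
\[
\phi(p^\varepsilon, t^\varepsilon) \geq \min_v \max_b \phi\left(p^\varepsilon \cd (\sqrt{2}\varepsilon b v), t^\varepsilon - \varepsilon^2\right).
\]
A Taylor expansion, exactly as in (\ref{taylor1})--(\ref{taylor2}), yields
\[
\varepsilon^2 \phi_t(p^\varepsilon, t^\varepsilon) \geq \min_{v \in S^1_h} \max_{b = \pm 1}\left(\sqrt{2}\varepsilon b \langle v_h, \nabla_H \phi(p^\varepsilon, t^\varepsilon)\rangle + \varepsilon^2 \langle (\nabla_H^2\phi)^\ast(p^\varepsilon, t^\varepsilon) v_h, v_h\rangle\right) + o(\varepsilon^2).
\]
Since the Hessian term is independent of $b$, the inner max equals $\sqrt{2}\varepsilon |\langle v_h, \nabla_H\phi\rangle| + \varepsilon^2 \langle (\nabla_H^2\phi)^\ast v_h, v_h\rangle$.

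For \textbf{Case A}, when $\nabla_H \phi(\hat{p}, \hat{t}) \neq 0$, both $\nabla_H\phi(p^\varepsilon, t^\varepsilon) \neq 0$ for small $\varepsilon$ and the upper bound obtained by choosing the admissible direction $\tilde{v}^\varepsilon = (X_2\phi, -X_1\phi, 0)/|\nabla_H\phi|$ perpendicular to $\nabla_H\phi(p^\varepsilon, t^\varepsilon)$ gives
\[
\min_v \max_b[\cdots] \leq \varepsilon^2 \langle (\nabla_H^2\phi)^\ast(p^\varepsilon, t^\varepsilon) \tilde{v}_h^\varepsilon, \tilde{v}_h^\varepsilon\rangle.
\]
To match this upper bound from below, let $v^\varepsilon$ be a near-minimizer. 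Since the first term is nonnegative and the total is at most $O(\varepsilon^2)$, we must have $|\langle v_h^\varepsilon, \nabla_H\phi(p^\varepsilon, t^\varepsilon)\rangle| = O(\varepsilon)$, forcing any subsequential limit $v_h^\varepsilon \to v_h^*$ to satisfy $\langle v_h^*, \nabla_H\phi(\hat{p}, \hat{t})\rangle = 0$ and hence $v_h^* = \pm \tilde{v}_h(\hat{p}, \hat{t})$. Dropping the nonnegative gradient term in the lower bound then gives $\min_v \max_b[\cdots] \geq \varepsilon^2 \langle (\nabla_H^2\phi)^\ast v_h^\varepsilon, v_h^\varepsilon\rangle$, which converges to $\varepsilon^2$ times the curvature at $(\hat{p}, \hat{t})$. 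Dividing by $\varepsilon^2$ and passing to the limit produces the desired supersolution inequality $\phi_t + F(\nabla_H\phi, (\nabla_H^2\phi)^\ast) \geq 0$ at $(\hat{p}, \hat{t})$.

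For \textbf{Case B}, when $\nabla_H\phi(\hat{p}, \hat{t}) = 0$, by Definition \ref{trad defi} for a supersolution, the relevant test functions are restricted to those also satisfying $(\nabla_H^2\phi)^\ast(\hat{p}, \hat{t}) = 0$, and the inequality to be verified is simply $\phi_t(\hat{p}, \hat{t}) \geq 0$. Using the trivial lower bound
\[
\min_{v \in S^1_h} \max_{b = \pm 1}[\cdots] \geq \varepsilon^2 \lambda_{\min}\!\left((\nabla_H^2\phi)^\ast(p^\varepsilon, t^\varepsilon)\right),
\]
dividing by $\varepsilon^2$, and sending $\varepsilon \to 0$ while invoking continuity of $(\nabla_H^2\phi)^\ast$ at $(\hat{p}, \hat{t})$, where it vanishes, we obtain $\phi_t(\hat{p}, \hat{t}) \geq 0$.

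The main obstacle I anticipate is the lower-bound estimate in Case A: in the subsolution argument one only needs an upper bound on $\min_v \max_b$, which is obtained for free by picking any admissible $v$, whereas here I must argue that the minimizers $v^\varepsilon$ actually concentrate on the direction perpendicular to $\nabla_H\phi(\hat{p}, \hat{t})$. This is what makes the first-order (gradient) term negligible and lets the second-order (curvature) term dictate the limit. The dominance of the $O(\varepsilon)$ gradient term over the $O(\varepsilon^2)$ Hessian term for small $\varepsilon$ is precisely the reason the concentration occurs, and converting this intuition into the bound $|\langle v_h^\varepsilon, \nabla_H\phi\rangle| = O(\varepsilon)$ is the one step that requires care.
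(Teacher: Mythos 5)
Your proposal is correct and follows essentially the same structure as the paper's proof: reverse the inequality from the dynamic programming principle at an approximating minimizer, Taylor-expand, and split into the two cases $\nabla_H\phi(\hat{p},\hat{t})\neq 0$ and $=0$. The one noteworthy technical difference is in Case A: you establish the lower bound $\min_v\max_b[\cdots]\geq \varepsilon^2\langle(\nabla_H^2\phi)^\ast\tilde{v}_h,\tilde{v}_h\rangle + o(\varepsilon^2)$ via a soft compactness argument (extract a subsequence of near-minimizers $v_h^\varepsilon$ and show they must limit to $\pm\tilde{v}_h$), whereas the paper invokes Lemma \ref{elementary lem} (from \cite{GL}) to get the quantitative estimate $\langle Xv_h,v_h\rangle\geq\langle X\tilde{v}_h,\tilde{v}_h\rangle - M|\langle v_h,\nabla_H\phi\rangle|/|\nabla_H\phi|$ directly, which makes the $O(1/\varepsilon)$ gradient term dominate the perturbation of the Hessian term without passing to subsequences; both routes are valid. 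Your Case B handling (uniform bound $\min_v\max_b[\cdots]\geq\varepsilon^2\lambda_{\min}$, then continuity of the Hessian which vanishes at the limit) is actually cleaner than the paper's, which splits Case B into further subcases according to whether $\nabla_H\phi(p^\varepsilon,t^\varepsilon)$ vanishes.
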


In order to facilitate the proof, let us present an elementary result. 

\begin{lem}[Lemma 4.1 in \cite{GL}]\label{elementary lem}
Suppose $\xi$ is a unit vector in $\mathbb{R}^2$ and $X$ is a real symmetric $2\times 2$
  matrix, then there exists a constant $M>0$ that depends only on the norm of $X$, such that for any unit
  vector $v\in \mathbb{R}^2$,
  \begin{equation}
    |\langle X\xi^\bot, \xi^\bot\rangle-\langle X v, v\rangle|\leq
    M|\langle\xi, v\rangle|,
  \end{equation}
  where $\xi^\bot$ denotes a unit orthonormal vector of $\xi$.
\end{lem}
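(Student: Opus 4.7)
The plan is to argue by a direct orthonormal decomposition of $v$ relative to the basis $\{\xi,\xi^\bot\}$. Write $v=a\xi+b\xi^\bot$ with $a=\langle\xi,v\rangle$ and $b=\langle\xi^\bot,v\rangle$. Since $|v|=1$, we have $a^2+b^2=1$, so in particular $b^2=1-a^2$ and $|a|,|b|\le 1$. The quantity $\langle X\xi,v\rangle$ kind of manipulation is what will isolate the factor of $a$.

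Next I would expand $\langle Xv,v\rangle$ using bilinearity:
\[
\langle Xv,v\rangle = a^2\langle X\xi,\xi\rangle + 2ab\langle X\xi,\xi^\bot\rangle + b^2\langle X\xi^\bot,\xi^\bot\rangle.
\]
Subtracting from $\langle X\xi^\bot,\xi^\bot\rangle$ and using $1-b^2=a^2$, the difference collapses to
\[
\langle X\xi^\bot,\xi^\bot\rangle-\langle Xv,v\rangle = a^2\bigl(\langle X\xi^\bot,\xi^\bot\rangle-\langle X\xi,\xi\rangle\bigr) - 2ab\langle X\xi,\xi^\bot\rangle.
\]
Both summands on the right carry a factor of $a=\langle\xi,v\rangle$.

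Taking absolute values and using $|a|,|b|\le 1$ yields
\[
\bigl|\langle X\xi^\bot,\xi^\bot\rangle-\langle Xv,v\rangle\bigr| \le |a|\Bigl(|a|\,\bigl|\langle X\xi^\bot,\xi^\bot\rangle-\langle X\xi,\xi\rangle\bigr| + 2|b|\,\bigl|\langle X\xi,\xi^\bot\rangle\bigr|\Bigr) \le M\,|\langle\xi,v\rangle|,
\]
where $M$ can be taken to be, for example, $3\|X\|$ (any bound on the entries of $X$ in any orthonormal basis suffices, and this depends only on the operator norm of $X$). This establishes the lemma.

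There is essentially no obstacle here; the only point worth remarking is that the key cancellation relies on the identity $1-b^2=a^2$, which is exactly what forces the remaining terms to carry a factor of $a=\langle\xi,v\rangle$, as required.
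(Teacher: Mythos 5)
Your proof is correct and follows essentially the same route as the paper's: both expand $v$ in the orthonormal basis $\{\xi,\xi^\bot\}$ with coefficients $\langle\xi,v\rangle$ and $\langle\xi^\bot,v\rangle$ (the paper writes them as $\sin\theta,\cos\theta$) and use $1-\langle\xi^\bot,v\rangle^2=\langle\xi,v\rangle^2$ to extract the common factor $\langle\xi,v\rangle$. The only cosmetic difference is that you expand the bilinear form directly, whereas the paper phrases the same computation via $\langle Xw,w\rangle=\tr(X\,w\otimes w)$ and a trace-norm bound; the underlying cancellation is identical.
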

\begin{proof}
Let $\cos\theta=\langle \xi^\bot, v\rangle$ and $\sin\theta=\langle \xi, v\rangle$. Then we have 
\[
\begin{aligned}
&|\langle X\xi^\bot, \xi^\bot\rangle -\langle Xv, v\rangle|\\
=& |\tr\left(X(\xi^\bot\otimes \xi^\bot- v\otimes v)\right)|\\
\leq &\|X\| \|\xi^\bot\otimes \xi^\bot-(\xi\sin\theta+\xi^\bot \cos\theta)\otimes (\xi\sin\theta+\xi^\bot\cos\theta)\|\\
=&\|X\|\|\sin^2\theta \xi^\bot\otimes \xi^\bot-\sin\theta\cos\theta(\xi\bot\xi^\bot+\xi^\bot\otimes \xi)\|\\
\leq & M|\sin\theta|,
\end{aligned}
\]
where $M>0$ depends on $\|X\|$. 
\end{proof}
We refer the reader to \cite[Lemma 2.3]{L1} for a higher dimensional extension of this lemma. 

\begin{proof}[Proof of Proposition \ref{game supersolution}]
Assume that there exists $(\hat{p}, \hat{t})\in \mathcal{H}\times (0, \infty)$ and $\phi\in C^2(\mathcal{H}\times (0, \infty))$ such that $\ul{u}-\phi$ attains a strict minimum at $(\hat{p}, \hat{t})$. We may again take a sequence $(p^\varepsilon, t^\varepsilon)\in \mathcal{H}\times (0, \infty)$ such that $(p^\varepsilon, t^\varepsilon) \to (\hat{p}, \hat{t})$ and 
$u^\varepsilon(p^\varepsilon, t^\varepsilon)\to \ul{u}(\hat{p}, \hat{t})$ as $\varepsilon\to 0$ and 
\begin{equation}\label{min convergence}
u^\varepsilon(p^\varepsilon, t^\varepsilon)-\phi(p^\varepsilon, t^\varepsilon)= \min_{B_r(\hat{p}, \hat{t})}(u^\varepsilon-\phi)
\end{equation}
Applying the dynamic programming principle (\ref{dpp}) with $(p, t)=(p^\varepsilon, t^\varepsilon)$, we have
\[
u^\varepsilon(p^\varepsilon, t^\varepsilon)=\min_{v}\max_{b} u^\varepsilon(p^\varepsilon\cdot \sqrt{2}\varepsilon bv, t^\varepsilon-\varepsilon^2).
\]
It then follows from (\ref{min convergence}) that
\[
\phi(p^\varepsilon, t^\varepsilon)\geq \min_{v}\max_{b}\phi(p^\varepsilon\cdot \sqrt{2}\varepsilon bv, t^\varepsilon-\varepsilon^2).
\]
As an analogue of (\ref{taylor2}), the Taylor expansion at $(p^\varepsilon, t^\varepsilon)$ yields
\begin{equation}\label{taylor3}
\phi_t(p^\varepsilon, t^\varepsilon)-\min_v\max_b({1\over \varepsilon}\langle \sqrt{2} bv_h, \nabla_H \phi(p^\varepsilon, t^\varepsilon)\rangle +\langle (\nabla_H^2 \phi)^\ast(p^\varepsilon, t^\varepsilon) v_h, v_h\rangle)\geq o(1),
\end{equation}
as $\varepsilon\to 0$.

We again divide our discussion into two situations.

\noindent \textbf{Case A:} $\nabla_H\phi(\hat{p}, \hat{t})\neq 0$. Then $\nabla\phi(p^\varepsilon, t^\varepsilon)\neq 0$ for all sufficiently small $\varepsilon>0$. We adopt Lemma \ref{elementary lem} and get
\[
\begin{aligned}
&\max_b\left({1\over \varepsilon}\langle \sqrt{2} bv_h, \nabla_H \phi(p^\varepsilon, t^\varepsilon)\rangle +\langle (\nabla_H^2 \phi)^\ast(p^\varepsilon, t^\varepsilon) v_h, v_h\rangle\right)\\
\leq &\langle (\nabla_H^2 \phi)^\ast(p^\varepsilon, t^\varepsilon) \tilde{v}_h, \tilde{v}_h\rangle+\left(M+{\sqrt{2}\over\varepsilon}\right)|\langle v_h, \phi_H(p^\varepsilon, t^\varepsilon)\rangle|,
\end{aligned}
\]
where 
\[
\tilde{v}_h=\frac{1}{|\nabla_H\phi(p^\varepsilon, t^\varepsilon)|}(X_2\phi(p^\varepsilon, t^\varepsilon), -X_1\phi(p^\varepsilon, t^\varepsilon)),
\] 
as given in the proof of Proposition \ref{game subsolution}. It is now clear, by taking $v_h=\tilde{v}_h$, that
\[
\begin{aligned}
&\min_v\max_b\left({1\over \varepsilon}\langle \sqrt{2} bv_h, \nabla_H \phi(p^\varepsilon, t^\varepsilon)\rangle +\langle (\nabla_H^2 \phi)^\ast(p^\varepsilon, t^\varepsilon) v_h, v_h\rangle\right)\\
\leq &\langle (\nabla_H^2 \phi)^\ast(p^\varepsilon, t^\varepsilon) \tilde{v}_h, \tilde{v}_h\rangle,
\end{aligned}
\]
which implies through (\ref{taylor3}) that
\[
\phi_t(p^\varepsilon, t^\varepsilon)-\langle (\nabla_H^2 \phi)^\ast(p^\varepsilon, t^\varepsilon) \tilde{v}_h, \tilde{v}_h\rangle\geq o(1).
\]
Letting $\varepsilon\to 0$, we obtain 
\[
\phi_t(\hat{p}, \hat{t})-\tr\left[\bigg(I-\frac{\nabla_H\phi(\hat{p}, \hat{t})\otimes \nabla_H\phi(\hat{p}, \hat{t})}{|\nabla_H\phi(\hat{p}, \hat{t})|^2}\bigg)(\nabla_H^2 \phi)^\ast(\hat{p}, \hat{t})\right]\geq 0.
\]

\noindent\textbf{Case B:} $\nabla_H\phi(\hat{p}, \hat{t})= 0$. We may further assume (\ref{sub derivation3}) again in this case.
We may apply the same argument above and get 
\begin{equation}\label{sub derivation5}
\phi_t(\hat{p}, \hat{t})\geq 0,
\end{equation}
provided that $\nabla_H\phi(p^\varepsilon, t^\varepsilon)\neq 0$ for all $\varepsilon>0$. It remains to show (\ref{sub derivation5}) when there is a subsequence $\varepsilon_j$ such that $\nabla_H\phi(p^{\varepsilon_j}, t^{\varepsilon_j})=0$. 
By (\ref{taylor3}), we have on this occasion
\[
\phi_t(p^{\varepsilon_j}, t^{\varepsilon_j})-\langle (\nabla_H^2 \phi)^\ast(p^{\varepsilon_j}, t^{\varepsilon_j}) v_h, v_h\rangle)\geq o(1) \text{ for some $v$.}
\]
Sending $\varepsilon\to 0$, we get (\ref{sub derivation5}).
\end{proof}

We are now in a position to prove Theorem \ref{game approximation}.
\begin{proof}[Proof of Theorem \ref{game approximation}]
In terms of Proposition \ref{prop axisym}, Proposition \ref{game subsolution} and Proposition \ref{game supersolution}, $\ol{u}$ and $\ul{u}$ are respectively a subsolution and a supersolution of (\ref{mcf1}) that are axisymmetric with respect to the vertical axis. For any $T>0$, $\ol{u}(p, t)$ and $\ul{u}(p, t)$ are constant outside a compact set of $\mathcal{H}$ for all $t\in [0, T]$, owing to Proposition \ref{prop const outside}. Also, since $\ol{u}(p, 0)\leq u_0(p)$ and $\ul{u}(p, 0)\geq u_0(p)$ for all $p\in \mathcal{H}$, we may apply Theorem \ref{comparison theorem} to get $\ol{u}\leq \ul{u}$ in $\mathcal{H}\times [0, T]$. As it is obvious that $\ol{u}\geq \ul{u}$, we get $\ol{u}=\ul{u}$ in $\mathcal{H}\times [0, T]$
with $u(\cdot, 0)=u_0(\cdot)$. In conclusion, $u=\ol{u}=\ul{u}$ is the unique continuous solution of (MCF) and the locally uniform convergence $u^\varepsilon\to u$ follows immediately.
\end{proof}

\section{Stability}\label{sec stability}
The following stability result is standard in the theory of viscosity solutions. 
\begin{thm}[Stability under the uniform convergence]\label{thm stability}
Let $u^\varepsilon$ be solutions of (\ref{mcf1}) and $u^\varepsilon\to u$ locally uniformly in $\mathcal{H}\times [0, \infty)$. Then $u$ is also a solution of (\ref{mcf1}).
\end{thm}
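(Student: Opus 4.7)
The plan is to prove the two halves separately (subsolution and supersolution) via the standard viscosity stability argument, using the envelope-based Definition~\ref{enve defi} (which, by the equivalence proposition, is the same as Definition~\ref{trad defi}). First note that $u$ is continuous, since it is the locally uniform limit of continuous functions, so we do not need to take relaxed upper/lower envelopes.

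To show that $u$ is a subsolution of (\ref{mcf1}), I would fix $\phi\in C^2(\mathcal{H}\times(0,\infty))$ and $(\hat p,\hat t)\in \mathcal{H}\times(0,\infty)$ such that $u-\phi$ attains a strict local maximum at $(\hat p,\hat t)$ on some closed ball $\overline{B_r(\hat p,\hat t)}$. Because $u^\varepsilon\to u$ uniformly on this ball, the maximizers $(p^\varepsilon,t^\varepsilon)$ of $u^\varepsilon-\phi$ on $\overline{B_r(\hat p,\hat t)}$ converge to $(\hat p,\hat t)$; this is the classical ``convergence of maximizers'' lemma (cf.\ \cite[Lemma~2.2.5]{G1}). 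For $\varepsilon$ small, $(p^\varepsilon,t^\varepsilon)$ lies in the interior of the ball, so it is a genuine local maximum of $u^\varepsilon-\phi$. Applying the subsolution property of $u^\varepsilon$ via Definition~\ref{enve defi} gives
\[
\phi_t(p^\varepsilon,t^\varepsilon)+F_\star\!\left(\nabla_H\phi(p^\varepsilon,t^\varepsilon),(\nabla_H^2\phi)^\ast(p^\varepsilon,t^\varepsilon)\right)\le 0.
\]
Now pass to the limit $\varepsilon\to 0$: $\phi_t$, $\nabla_H\phi$, and $(\nabla_H^2\phi)^\ast$ are continuous at $(\hat p,\hat t)$, and $F_\star$ is lower semicontinuous by its definition as a lower envelope. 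Taking $\liminf_{\varepsilon\to 0}$ yields
\[
\phi_t(\hat p,\hat t)+F_\star\!\left(\nabla_H\phi(\hat p,\hat t),(\nabla_H^2\phi)^\ast(\hat p,\hat t)\right)\le 0,
\]
which is exactly the subsolution inequality of Definition~\ref{enve defi} at $(\hat p,\hat t)$.

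The supersolution half is entirely symmetric: replace ``maximum'' by ``minimum'', $F_\star$ by $F^\star$, and use that $F^\star$ is upper semicontinuous, so that $\limsup_{\varepsilon\to 0} F^\star\bigl(\nabla_H\phi(p^\varepsilon,t^\varepsilon),(\nabla_H^2\phi)^\ast(p^\varepsilon,t^\varepsilon)\bigr)\le F^\star\bigl(\nabla_H\phi(\hat p,\hat t),(\nabla_H^2\phi)^\ast(\hat p,\hat t)\bigr)$.

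The only potential subtlety is the singularity of $F$ at $\eta=0$, but this is precisely what the envelopes $F_\star$ and $F^\star$ are designed to handle: the semicontinuity of the envelopes allows passage to the limit regardless of whether $\nabla_H\phi(\hat p,\hat t)$ vanishes. No perturbation of $\phi$ or of the equation is needed, and the argument is insensitive to the sub-Riemannian structure since everything is tested against a smooth function whose horizontal gradient and Hessian are continuous in the Euclidean sense.
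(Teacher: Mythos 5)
Your proof is correct, and it takes a genuinely different and cleaner route than the paper for the stated theorem. The paper first proves a more general intermediate result, Lemma~\ref{lem stability}, on stability under the half-relaxed limits $\limsups$ and $\liminfs$, working with Definition~\ref{trad defi} and performing a case analysis on whether $\nabla_H\phi(\hat p,\hat t)$ vanishes (and, inside that, whether $\nabla_H\phi(p^\varepsilon,t^\varepsilon)$ eventually vanishes or not). Theorem~\ref{thm stability} then falls out as a corollary by observing that locally uniform convergence forces $\ol u=\ul u=u$. You instead invoke Definition~\ref{enve defi} directly and let the lower (resp.\ upper) semicontinuity of $F_\star$ (resp.\ $F^\star$) absorb the degeneracy at $\nabla_H\phi=0$ when passing to the limit; this eliminates the case analysis entirely. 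The trade-off is that your argument, as written, only treats locally uniform convergence of solutions, while the paper's Lemma~\ref{lem stability} handles separate families of sub- and supersolutions with only one-sided half-relaxed convergence, which the paper actually needs later (e.g.\ in Step~2 of the proof of Theorem~\ref{thm trans invariant}, where the approximating sequence does not converge uniformly). So your proof is valid and more elementary for the theorem as stated, but it is not a drop-in replacement for the full machinery of Section~\ref{sec stability}.
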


\begin{lem}\label{lem stability}
If $u^\varepsilon$ is a subsolution (resp., supersoution) of (\ref{mcf1}) for all small $\varepsilon>0$, then 
\[
\ol{u}=\limsups_{\varepsilon\to \infty}u^\varepsilon\quad
(\text{resp., } \ul{u}=\liminfs_{\varepsilon\to \infty} u^\varepsilon) 
\]
is also a subsolution (resp., supersoution) of (\ref{mcf1}).
\end{lem}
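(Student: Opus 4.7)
The plan is to verify that $\ol{u}$ satisfies the subsolution conditions of Definition \ref{enve defi}; by the equivalence with Definition \ref{trad defi} already established, this is enough. The supersolution case for $\ul{u}$ is entirely symmetric and I would not repeat it in detail. Note that $\ol{u}$ is automatically upper semicontinuous and finite since the $u^\varepsilon$ are bounded on the sets of interest.

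First I would take an arbitrary smooth test function $\phi$ such that $\ol{u}-\phi$ attains a strict maximum at some $(\hat{p}, \hat{t}) \in \mathcal{H} \times (0, \infty)$; strictness can be arranged on a small closed Korányi ball around $(\hat{p}, \hat{t})$ by adding a suitable Korányi bump to $\phi$, as in the remark following Definition \ref{trad defi}. The goal is then to prove
\[
\phi_t(\hat{p}, \hat{t}) + F_\star\bigl(\nabla_H\phi(\hat{p}, \hat{t}), (\nabla_H^2\phi)^\ast(\hat{p}, \hat{t})\bigr) \leq 0.
\]
By the standard convergence-of-maximizers argument (analogous to \cite[Lemma 2.2.5]{G1}) applied to the definition of the upper half relaxed limit $\ol{u}$ in (\ref{upper limit}), I can extract a subsequence $\varepsilon_j \to 0$ and points $(p_j, t_j) \to (\hat{p}, \hat{t})$ such that $u^{\varepsilon_j}-\phi$ attains its maximum over the fixed neighborhood at the interior point $(p_j, t_j)$, and $u^{\varepsilon_j}(p_j, t_j) \to \ol{u}(\hat{p}, \hat{t})$.

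Since each $u^{\varepsilon_j}$ is by hypothesis a subsolution of (\ref{mcf1}), applying Definition \ref{enve defi} at $(p_j, t_j)$ yields
\[
\phi_t(p_j, t_j) + F_\star\bigl(\nabla_H\phi(p_j, t_j), (\nabla_H^2\phi)^\ast(p_j, t_j)\bigr) \leq 0.
\]
Smoothness of $\phi$ forces $\phi_t$, $\nabla_H\phi$ and $(\nabla_H^2\phi)^\ast$ evaluated along $(p_j, t_j)$ to converge to their values at $(\hat{p}, \hat{t})$, while the lower semicontinuity of $F_\star$ (which is an immediate consequence of its definition via the envelope (\ref{lower enve})) allows me to pass to the $\liminf$ inside $F_\star$. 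Taking $j\to\infty$ in the previous inequality therefore delivers exactly the desired inequality at $(\hat{p}, \hat{t})$.

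I do not foresee any genuine obstacle: this is the classical Barles--Perthame half-relaxed-limit stability argument, and the sub-Riemannian structure plays no role once the envelope formulation of Definition \ref{enve defi} is in hand. The only point requiring care is that the singularity of $F$ at $\nabla_H\phi=0$ forces one to work with $F_\star$ rather than $F$ itself, which is precisely why Definition \ref{enve defi} (and, equivalently, the gradient-restricted Definition \ref{trad defi}) was set up in that way. The supersolution conclusion for $\ul{u}$ follows by the mirror argument: replacing max by min, reversing the inequalities, and invoking the upper semicontinuity of $F^\star$ to pass to the $\limsup$.
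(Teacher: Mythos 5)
Your proof is correct, and it takes a genuinely different (and arguably cleaner) route than the paper. The paper works directly in the gradient-restricted Definition \ref{trad defi}: it splits into a case where $\nabla_H\phi(\hat p,\hat t)\neq 0$ (where $F$ is continuous and passage to the limit is elementary) and a case where $\nabla_H\phi(\hat p,\hat t)=0$, which is then further subdivided according to whether $\nabla_H\phi(p^\varepsilon,t^\varepsilon)$ vanishes along the approximating sequence; the vanishing subcase is reduced to $\phi_t\leq 0$ by hand. You instead run the classical Barles--Perthame half-relaxed-limit argument through the envelope Definition \ref{enve defi}, using the equivalence proposition to translate back and forth. The key step is sound: from $\phi_t(p_j,t_j)+F_\star(\nabla_H\phi(p_j,t_j),(\nabla_H^2\phi)^\ast(p_j,t_j))\leq 0$, superadditivity of $\liminf$ together with $\liminf_j F_\star(\cdots)\geq F_\star(\nabla_H\phi(\hat p,\hat t),(\nabla_H^2\phi)^\ast(\hat p,\hat t))$ (lower semicontinuity of the envelope) gives the desired inequality in the limit; the supersolution half mirrors this with $F^\star$ upper semicontinuous. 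The advantage of your route is that it subsumes all of the paper's cases at once and makes the sub-Riemannian structure invisible at this stage, exactly as you observe; the advantage of the paper's route is that it stays entirely within Definition \ref{trad defi}, which is the definition used in most of its other arguments (notably the comparison theorem). The one point to state explicitly is that the convergence-of-maximizers step (your citation to \cite[Lemma 2.2.5]{G1}) produces interior local maximizers of $u^{\varepsilon_j}-\phi$, which is exactly what Definition \ref{enve defi} requires.
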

\begin{proof}
Suppose there exists $\phi\in C^2(\mathcal{H}\times [0, \infty))$ and $(\hat{p}, \hat{t})\in \mathcal{H}\times (0, \infty)$ such that $\ol{u}-\phi$ attains a strict maximum at $(\hat{p}, \hat{t})$. Then by the convergence of maximizers as shown in \cite[Lemma 2.2.5]{G1}, we can take subsequences of $p^\varepsilon, t^\varepsilon$ and $u^\varepsilon$, still indexed by $\varepsilon$, satisfying $(p^\varepsilon, t^\varepsilon)\to (\hat{p}, \hat{t})$ as $\varepsilon\to 0$ and 
\[
(u^\varepsilon-\phi)(p^\epsilon, t^\epsilon)=\max_{\mathcal{H}\times [0, \infty)}( u^\varepsilon-\phi).
\]
We discuss two cases.\\
\textbf{Case 1:} $\nabla_H\phi(\hat{p}, \hat{t})\neq 0$. Then $\nabla_H\phi(p^\varepsilon, t^\varepsilon)\neq 0$ for all $\varepsilon>0$ small. We apply Definition \ref{trad defi} and get
\[
\phi_t-\tr\left[\bigg(I-\frac{\nabla_H\phi\otimes \nabla_H\phi}{|\nabla_H\phi|^2}\bigg)(\nabla_H^2 \phi)^\ast\right]\leq 0 \text{ at $(p^\varepsilon, t^\varepsilon)$}.
\]
Sending $\varepsilon\to 0$, we get the desired inequality
\[
\phi_t-\tr\left[\bigg(I-\frac{\nabla_H\phi\otimes \nabla_H\phi}{|\nabla_H\phi|^2}\bigg)(\nabla_H^2 \phi)^\ast\right]\leq 0 \text{ at $(\hat{p}, \hat{t})$}.
\]
\textbf{Case 2:} $\nabla_H\phi(\hat{p}, \hat{t})=0$. Then by Definition \ref{trad defi} we only need to discuss the situation when $(\nabla_H^2 \phi)^\ast(\hat{p}, \hat{t})=0$ also holds. Hence, $\nabla_H \phi(p^\varepsilon, t^\varepsilon)\to 0$ and $(\nabla_H^2 \phi)^\ast(p^\varepsilon, t^\varepsilon)\to 0$ as $\varepsilon\to 0$. If there exists a subsequence $\varepsilon_j$ such that $\nabla_H \phi(p^{\varepsilon_j}, t^{\varepsilon_j})\neq 0$, then we have
\[
\phi_t-\tr\left[\bigg(I-\frac{\nabla_H\phi\otimes \nabla_H\phi}{|\nabla_H\phi|^2}\bigg)(\nabla_H^2 \phi)^\ast\right]\leq 0 \text{ at $(p^{\varepsilon_j}, t^{\varepsilon_j})$}.
\]
Passing to the limit $j\to \infty$, we obtain $\phi_t(\hat{p}, \hat{t})\leq 0$.

If, on the other hand, $\nabla_H \phi(p^\varepsilon, t^\varepsilon)=0$ for all $\varepsilon>0$ small, then we get $\phi_t(p^\varepsilon, t^\varepsilon)\leq 0$ and the limit immediately yields $\phi_t(\hat{p}, \hat{t})\leq 0$, which completes our proof.

One may similarly prove that $\ul{u}=\liminfs_{\varepsilon\to 0} u^\varepsilon$ is a supersolution provided that $u^\varepsilon$ is a supersolution for all $\varepsilon>0$ small.
\end{proof}

\begin{proof}[Proof of Theorem \ref{thm stability}]
Let
\[
\ol{u}=\limsups_{\varepsilon\to 0} u^\varepsilon \text{ and }\ul{u}=\liminfs_{\varepsilon\to 0} u^\varepsilon.
\]
Then in virtue of Lemma \ref{lem stability}, $\ol{u}$ is a subsolution of (\ref{mcf1}) and $\ul{u}$ is a supersolution of (\ref{mcf1}). Noting that $u^\varepsilon\to u$ locally uniformly, we must have $u=\ol{u}=\ul{u}$ and therefore $u$ is a solution of (\ref{mcf1}).
\end{proof}

\section{Properties of the evolution}\label{sec properties}
We have shown that there is a unique solution $u$ of (MCF) for any given continuous function $u_0$ which is axisymmetric with respect to the vertical axis and attains constant value outside a compact set. Let us turn to discuss the surface evolution described by the level-set equation (MCF). More precisely, given an axisymmetric compact surface $\Gamma_0\subset \mathcal{H}$, we choose $u_0\in C(\mathcal{H})$ such that it is axisymmetric constant outside a compact set and satisfies
\begin{equation}\label{initial surface}
\Gamma_0=\{p:\mathcal{H}: u_0(p)=0\}.
\end{equation}
We then solve (MCF) for the unique solution $u$ and get the surface 
\begin{equation}\label{surface evolution}
\Gamma_t=\{p\in \mathcal{H}: u(p, t)=0\} \text{ for any $t\geq 0$.}
\end{equation}

In what follows, we first show that the surface represented by the level-set $\Gamma_t$ of $u$ does not depend on the particular choice of $u_0$.

\subsection{Uniqueness of the surface evolution}
\begin{thm}[Invariance]\label{thm trans invariant}
Assume that $\theta:\mathbb{R}\to \mathbb{R}$ is continuous. If $u$ is a solution of (\ref{mcf1}). Then $w=\theta\circ u$ is also a solution of (\ref{mcf1}). 
\end{thm}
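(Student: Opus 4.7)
The plan is to prove the statement by a three-stage approximation that progressively relaxes the structure of $\theta$, repeatedly invoking the stability result.

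First I would dispose of the case in which $\theta$ is smooth and strictly monotone. Here $\theta^{-1}$ is smooth on the range of $\theta$, so for any test function $\phi$ touching $w = \theta\circ u$ from above at $(\hat{p},\hat{t})$, the composition $\psi := \theta^{-1}\circ \phi$ is a legitimate smooth function touching $u$ from above (if $\theta$ is increasing) or from below (if $\theta$ is decreasing). Applying the corresponding sub- or supersolution property of $u$ and expanding $\nabla_H\psi$ and $(\nabla_H^2\psi)^\ast$ by the chain rule, the projection identity $\bigl(I - \nabla_H\phi\otimes \nabla_H\phi/|\nabla_H\phi|^2\bigr)\nabla_H\phi = 0$ kills the contribution of the term $(\theta^{-1})''(\phi)\,\nabla_H\phi\otimes\nabla_H\phi$, leaving the viscosity inequality for $\phi$ scaled by the nonzero factor $(\theta^{-1})'(\phi)$. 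The three scenarios of Definition~\ref{trad defi} for $\psi$ are in bijection with those for $\phi$ because $(\theta^{-1})'(\phi)\neq 0$.

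Second, I would handle an arbitrary smooth $\theta$ by a local perturbation at each point where one wishes to check the viscosity inequalities. Fix $(\hat{p},\hat{t})$, put $u_0 = u(\hat{p},\hat{t})$, and consider $\tilde\theta_\eta(s) := \theta(s)+\eta s$ for small $\eta>0$. Since $\tilde\theta_\eta'(u_0) = \theta'(u_0)+\eta$, by continuity $\tilde\theta_\eta$ is strictly monotone on some open interval $I\ni u_0$ (strictly increasing unless $\theta'(u_0)<0$, in which case strictly decreasing once $\eta$ is small enough). By continuity of $u$ there is a space--time neighborhood $\mathcal{O}$ of $(\hat{p},\hat{t})$ on which $u$ takes values in $I$; the first stage then applies on $\mathcal{O}$ to show that $\tilde w_\eta := \tilde\theta_\eta\circ u = w + \eta u$ is a viscosity sub- and supersolution of (\ref{mcf1}) on $\mathcal{O}$. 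As $\tilde w_\eta\to w$ uniformly on $\mathcal{O}$ when $\eta\to 0^+$ (using local boundedness of $u$), Lemma~\ref{lem stability}, whose proof is pointwise, yields that $w$ satisfies the viscosity inequalities on $\mathcal{O}$, hence at $(\hat{p},\hat{t})$.

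Third, given a merely continuous $\theta:\mathbb{R}\to\mathbb{R}$, I would mollify to produce a sequence of smooth functions $\theta_n := \theta\ast \rho_n$ that converges to $\theta$ locally uniformly on $\mathbb{R}$. Since $u$ is continuous and therefore locally bounded, $w_n := \theta_n\circ u$ converges to $w = \theta\circ u$ locally uniformly on $\mathcal{H}\times[0,\infty)$. Each $w_n$ is a solution of (\ref{mcf1}) by the second stage, and Theorem~\ref{thm stability} then yields that $w$ is a solution.

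The main technical obstacle is the second stage: although $\tilde\theta_\eta$ need not be globally monotone, its local strict monotonicity near $u_0$ must be shown to be enough to apply the inversion trick of the first stage on a space--time neighborhood where the range of $u$ stays inside $I$, and one must verify that Lemma~\ref{lem stability} can be applied on an open subset rather than on all of $\mathcal{H}\times(0,\infty)$. Both reductions are clean because Definition~\ref{trad defi} and the proof of Lemma~\ref{lem stability} are genuinely pointwise, so they restrict without modification.
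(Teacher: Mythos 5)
Your Stage~1 (smooth strictly monotone $\theta$, inversion plus chain rule plus the projection identity killing the $(\theta^{-1})''\,\nabla_H\phi\otimes\nabla_H\phi$ term) is the same as the paper's Step~1, and your Stage~3 (mollify $\theta$ and invoke Theorem~\ref{thm stability}) is sound \emph{given} Stage~2. The gap is in Stage~2, and it is genuine.

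When $\theta'(u_0)=0$ and $u_0$ is a strict local extremum of $\theta$ (say a local maximum), the interval $I$ on which $\tilde\theta_\eta(s)=\theta(s)+\eta s$ is strictly monotone is \emph{not} uniform in $\eta$: for $s>u_0$ close to $u_0$ one has $\theta'(s)<0$, and $\tilde\theta_\eta'(s)=\theta'(s)+\eta>0$ only when $|\theta'(s)|<\eta$, so the right endpoint of $I=I_\eta$ collapses to $u_0$ as $\eta\to 0^+$. Consequently the space--time neighborhood $\mathcal{O}=\mathcal{O}_\eta$ on which $u$ takes values in $I_\eta$ shrinks to the single point $(\hat p,\hat t)$, and you no longer have a family of solutions $\tilde w_\eta$ on a fixed neighborhood of $(\hat p,\hat t)$ to feed into Lemma~\ref{lem stability}: the maximizers $(p^\eta,t^\eta)$ of $\tilde w_\eta-\phi$ produced in that lemma's proof converge to $(\hat p,\hat t)$, but there is no control on the rate, so there is no guarantee that $(p^\eta,t^\eta)\in\mathcal{O}_\eta$. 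The argument therefore does not close precisely at those $(\hat p,\hat t)$ where $\theta'(u(\hat p,\hat t))=0$ and $\theta$ has a strict local extremum there.

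This is exactly the degeneracy the paper's proof is structured to dodge. The paper goes from smooth strictly monotone $\theta$ (its Step~1, your Stage~1) to merely \emph{continuous monotone} $\theta$ (its Step~2), using the $\limsups$ of an approximating sequence of smooth strictly increasing functions, via \cite[Lemma 4.2.3]{G1} and Lemma~\ref{lem stability}; and then handles general continuous $\theta$ (its Step~3) by approximating with polynomials $\theta_m$ \emph{modified so as to be constant near each local extremum}. That last modification is the whole point: it ensures each approximant is locally either strictly monotone or constant, and in the constant case $\theta_m\circ u$ is trivially a solution. Your linear perturbation $\theta+\eta\,\mathrm{id}$ does not produce that dichotomy at strict local extrema of $\theta$. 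To repair your Stage~2 you would need to replace the additive perturbation near such points with a local flattening of $\theta$ (as the paper does), or directly verify the viscosity inequalities at points where $u(\hat p,\hat t)$ is a local extremum of $\theta$ by a separate elementary argument.
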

\begin{proof}
We prove the theorem in several steps.

\noindent\textbf{Step 1.} We first give the proof in the case that $\theta\in C^2(\mathbb{R})$ and $\theta'>0$. Suppose that there is $\phi\in C^2(\mathcal{H}\times [0, \infty))$ and $(\hat{p}, \hat{t})\in \mathcal{H}\times (0, \infty)$ such that $\theta\circ u-\phi$ attains a maximum at $(\hat{p}, \hat{t})$. Then it is clear that $u-h(\phi)$ attains a maximum at $(\hat{p}, \hat{t})$, where $h=\theta^{-1}\in C^2(\mathbb{R})$ with $h'>0$. Denote $\psi=h(\phi)$. Since $u$ is a subsolution of (\ref{mcf1}), we have
\[
\psi_t-\tr\left[\bigg(I-\frac{\nabla_H\psi\otimes \nabla_H\psi}{|\nabla_H\psi|^2}\bigg)(\nabla_H^2 \psi)^\ast\right]\leq 0 \text{ at $(\hat{p}, \hat{t})$}.
\]
Note that $\psi_t=h'\phi_t$, $\nabla_H\psi=h'\nabla_H\phi$ and 
\[
(\nabla_H^2\psi)^\ast=h''\nabla_H\phi\otimes \nabla_H\phi+h'(\nabla_H^2\phi)^\ast.
\]
It follows that 
\[
\phi_t-\tr\left[\bigg(I-\frac{\nabla_H\phi\otimes \nabla_H\phi}{|\nabla_H\phi|^2}\bigg)(\nabla_H^2 \phi)^\ast\right]\leq 0 \text{ at $(\hat{p}, \hat{t})$},
\]
which shows that $\theta\circ u$ is a subsolution of (\ref{mcf1}). An analogue of this argument yields that $\theta\circ u$ is also a supersolution. 

We also claim that $\theta\circ u$ remains being a solution when $\theta\in C^2(\mathbb{R})$ and $\theta'<0$. Indeed, when $\theta$ is a decreasing function, $-\theta$ is increasing. We obtain that $-\theta\circ u$ is a solution of (\ref{mcf1}). Thanks to the fact that the mean curvature flow is orientation-free or (\ref{mcf1}) is homogeneous in all of the derivatives, we easily see that $\theta\circ u$ is a solution as well. In particular, we note that $-u$ is a solution when $u$ is a solution.

\noindent \textbf{Step 2.} We generalize the consequence obtained in Step 1 for a continuous nondecreasing or nonincreasing function. Indeed, for any continuous nondecreasing function $\theta$, we may take $\theta_n\in C^2(\mathbb{R})$ with $\theta_n'>0$ for all $n=1, 2, \dots$ such that 
\[
\limsups_{n\to \infty}\theta_n\circ u= \theta\circ u.
\]
We refer the reader to \cite[Lemma 4.2.3]{G1} for details about the construction of $\theta_n$. Since $\theta_n\circ u$ is a solution of (\ref{mcf1}) for all $n$, as shown in Step 1, $\theta\circ u$ is a subsolution, due to Lemma \ref{lem stability}. 

To show that $\theta\circ u$ is a supersolution, we define $\tilde{\theta}(x)=\theta(-x)$ for any $x\in \mathbb{R}$ and observe that $\theta(u)=\tilde{\theta}(-u)$. Since $\tilde{\theta}$ is nonincreasing and $-u$ is a solution, we may apply a symmetric version of \cite[Lemma 4.2.3]{G1} to get $\theta(u)=\tilde{\theta}(-u)$ is a supersoluiton.

When $\theta$ is a continuous nonincreasing function, $-\theta$ is nondecreasing. We apply again the homogeneity of (\ref{mcf1}) to obtain that $\theta\circ u$ is a solution. Since the verification of definition of (\ref{mcf1}) is pointwise, one can further relax the monotonicity condition on $\theta$ to a local monotonicity condition. 

To conclude this step, we notice that $\max\{\min\{u, C\} -C\}$ is a solution for any $C>0$ provided that $u$ is a solution.

\noindent\textbf{Step 3.} We finally discuss the situation when $\theta$ is assumed to be continuous only. By Theorem \ref{thm stability}, it suffices to discuss the bounded function $\max\{\min\{u, C\} -C\}$ instead of $u$ for arbitrarily large $C>0$. We approximate $\theta$ uniformly by polynomials $\theta_m$ in $[-C-1, C+1]$. Since polynomials only have finitely many maximizers and minimizers, we may also assume each $\theta_m$ is constant near all of its local maximizers and minimizers. 

In fact, if, for instance, $\theta_m$ attains a local maximum at $x_0\in \mathbb{R}$, we take $\min\{\theta_m(x), \theta(x_0)-\varepsilon_m\}$, where $\varepsilon_m>0$ is sufficiently small ($\varepsilon_m\to 0$ as $m\to \infty$) such that $\theta_m$ is continuous. 

Now $\theta_m$ is locally nonincreasing or nondecreasing. We apply the result in Step 2 and find that $\theta_m\circ u$ is a solution of (\ref{mcf1}). Since $\theta_m\to \theta$ uniformly, by the stability result given in Theorem \ref{thm stability}, we see that $\theta\circ u$ is a solution by sending $m\to \infty$.
\end{proof}
An immediate consequence of the theorem above is that our generalized surface evolution does not depend on the choice of the initial level-set function $u_0$.
\begin{cor}[Independence of the choice of the initial function]\label{cor independence}
Suppose that $u_0$ and $\tilde{u}_0$ are continuous functions in $\mathcal{H}$ axisymmetric about the vertical axis and are constant outside a compact set $K\subset \mathcal{H}$. Let $\Gamma_0=\{p\in \mathcal{H}: u_0(p)=0\}=\{p\in \mathcal{H}: \tilde{u}_0(p)=0\}$ be bounded. Let $u$ and $\tilde{u}$ be the unique continuous solutions of (\ref{mcf1}) with the initial conditions $u_0$ and $\tilde{u}_0$ respectively. For any $t\geq 0$, set 
\[
\Gamma_t=\{p\in \mathcal{H}: u(p, t)=0\}\text{ and } \tilde{\Gamma}_t=\{p\in \mathcal{H}: \tilde{u}(p, t)=0\}.
\]
Then $\Gamma_t=\tilde{\Gamma}_t$ for all $t\geq 0$.
\end{cor}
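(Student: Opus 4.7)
The plan is to leverage the invariance property (Theorem \ref{thm trans invariant}) together with the comparison theorem (Theorem \ref{comparison theorem}), sandwiching $\tilde{u}$ between two continuous functions of $u$ in order to read off equality of their zero level sets.

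The first step is to construct continuous functions $\theta_1, \theta_2 : \mathbb{R} \to \mathbb{R}$ with $\theta_1(0) = \theta_2(0) = 0$ such that
\[
\theta_1(u_0(p)) \leq \tilde{u}_0(p) \leq \theta_2(u_0(p)) \qquad \text{for every } p \in \mathcal{H}.
\]
The essential input is the shared zero level set $\{u_0 = 0\} = \{\tilde{u}_0 = 0\} = \Gamma_0$, combined with the continuity of $u_0, \tilde{u}_0$ and their constancy outside the compact set $K$. A natural recipe is to set, for $s > 0$,
\[
\theta_2(s) = \sup\bigl\{\tilde{u}_0(q) : q \in \mathcal{H},\ 0 < u_0(q) \leq s\bigr\},
\]
with a symmetric definition for $s < 0$, and to upper-regularize if necessary; one then puts $\theta_2(0) = 0$. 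Continuity of $\theta_2$ at $0$ is the only delicate point: if $u_0(p_n) \to 0$ with $p_n \in K$, then a subsequence converges to some $p_\infty \in \Gamma_0$, whence $\tilde{u}_0(p_n) \to \tilde{u}_0(p_\infty) = 0$, so $\theta_2(s) \to 0$ as $s \to 0^+$. The function $\theta_1$ is defined symmetrically via an infimum.

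Next, by Theorem \ref{thm trans invariant} both $\theta_1 \circ u$ and $\theta_2 \circ u$ are viscosity solutions of (\ref{mcf1}). They are axisymmetric (inherited from $u$), constant outside a compact set (since $u$ is constant there by Theorem \ref{game approximation}), and their constant values are compatible with those of $\tilde{u}$ by the sandwich at $t = 0$. Applying Theorem \ref{comparison theorem} to the pair $(\theta_1 \circ u, \tilde{u})$ and then to $(\tilde{u}, \theta_2 \circ u)$ yields
\[
\theta_1(u(p,t)) \leq \tilde{u}(p,t) \leq \theta_2(u(p,t)) \qquad \text{for every } (p, t) \in \mathcal{H} \times [0, \infty).
\]

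Finally, if $u(p,t) = 0$ then $\theta_1(u(p,t)) = \theta_2(u(p,t)) = 0$, forcing $\tilde{u}(p,t) = 0$ and hence $\Gamma_t \subseteq \tilde{\Gamma}_t$. The reverse inclusion follows by interchanging the roles of $u_0$ and $\tilde{u}_0$ in the construction of the sandwiching functions. The main technical obstacle is the construction of $\theta_1$ and $\theta_2$ in the first step; once they are in hand, the argument is a direct application of invariance and comparison already established in the paper.
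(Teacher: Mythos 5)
Your high-level strategy — sandwich $\tilde{u}_0$ between continuous reparametrizations of $u_0$, then apply Theorem \ref{thm trans invariant} and Theorem \ref{comparison theorem} — is exactly the paper's strategy (following Evans--Spruck), so the architecture is sound. Where you differ is in the construction of the sandwiching functions, and there the proposal has a real gap.

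You define $\theta_2(s)=\sup\{\tilde{u}_0(q): 0<u_0(q)\le s\}$ for $s>0$, plus a mirrored formula for $s<0$, and then say ``upper-regularize if necessary'' to achieve continuity. That does not close the gap: the function you wrote down is monotone on each side of $0$ and hence can have genuine jump discontinuities; an upper-semicontinuous regularization is still not continuous, and continuity is what Theorem \ref{thm trans invariant} actually requires of $\theta$. To make this work one needs either a smoothing that stays above $\theta_2$ (e.g.\ a moving average $\hat\theta_2(s)=\frac{2}{s}\int_s^{3s/2}\theta_2$), or the construction the paper uses: first replace $u_0$ and $\tilde u_0$ by $|u_0|$ and $|\tilde u_0|$ (justified by Theorem \ref{thm trans invariant} applied to $\theta=|\cdot|$, which leaves the zero level set unchanged), thereby eliminating the awkward two-sided case you wave away with ``a symmetric definition for $s<0$''; then pick the discrete values $a_k=\max_{\mathcal{H}\setminus E_{k-1}}\tilde u_0$ with $E_k=\{u_0>1/k\}$ and interpolate linearly between $\theta(1/k)=a_k$ and $\theta(0)=0$, which is automatically continuous and still dominates $\tilde u_0\circ u_0^{-1}$ in the needed sense. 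Your continuity-at-$0$ argument via compactness of $K$ is correct and is the same compactness input the paper uses (to show $a_k\to 0$); the problem is only away from $0$. Once a genuinely continuous $\theta$ is produced, the rest of your argument (comparison of $\theta_1\circ u$ with $\tilde u$, reading off $\Gamma_t\subseteq\tilde\Gamma_t$, and symmetry) is correct and matches the paper.
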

\begin{proof}
We follow the proof of \cite[Theorem 5.1]{ES}. It is obvious, from Theorem \ref{comparison theorem} and Theorem \ref{game approximation}, that $u$ and $\tilde{u}$ are axisymmetric about the vertical axis.

We may assume $u_0\geq 0$ without changing the zero level set of $u_0$, since $|u|$ is a solution of (\ref{mcf1}) with the initial condition $u(p, 0)=|u_0|$ by Theorem \ref{thm trans invariant}. Similarly, let us also assume that $\tilde{u}_0\geq 0$ and $\tilde{u}\geq 0$.

For any $k = 1, 2, \dots$ let $E_0 = \emptyset$ and $E_k = \{p\in \mathcal{H}: u_0(p) > l/k\}$ such that
$E_k$ is nondecreasing and $\mathcal{H}\subset \Gamma_0=\cup_{k}E_k$.
Define
\[
a_k = \max_{\mathcal{H}\setminus E_{k-1}} \tilde{u}_0  (k = 1, 2 , . . . ). 
\]
Then we have $\lim_{k\to \infty} a_k=0$. We then construct a continuous function $\theta$ satisfying $\theta(0)=0$, $\theta(1/k)=a_k$ for all $k$ and $\theta=a_1$ in $[1, \infty)$.

Now it is clear that $\theta\circ u$ is an axisymmetric solution of (\ref{mcf1}) with initial data $\theta\circ u_0$, again due to Theorem \ref{thm trans invariant}. By our construction of $\theta$, we easily see that $\theta\circ u_0\geq \tilde{u}_0$. Applying Theorem \ref{comparison theorem} for all $T>0$, we get $\theta\circ u\geq \tilde{u}$. This means that $\Gamma_t\subset \tilde{\Gamma}_t$ for any $t\geq 0$. Indeed, for any $p\in \Gamma_t$, we have $u(p, t)=0$, which implies that $\theta\circ u(p, t)=0$ and therefore $\tilde{u}(p, t)=0$. 

We conclude the proof by similarly showing the inclusion $\tilde{\Gamma}_t\subset \Gamma_t$ for any $t\geq 0$.

\end{proof}

\subsection{Finite time extinction}\label{sec extinction}
We give a simple geometric property of the mean curvature flow. The following result shows that an axisymmetric compact surface evolving by its mean curvature shrinks and disappears in finite time.
\begin{thm}[Finite time extinction for bounded evolution]\label{thm extinction}
Suppose that $\{\Gamma_t\}_{t\geq 0}$ denotes an axisymmetric surface evolution of the mean curvature flow. If $\Gamma_0\subset B_r$, for $r>0$, then $\Gamma_t=\emptyset$ when $t>r^2/\sqrt{12}$. 
\end{thm}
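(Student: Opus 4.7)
The plan is to compare with a truncated version of the explicit solution from Proposition \ref{prop explicit solution}. Fix $M > 0$ large and define
\begin{equation*}
\tilde u(p, t) = \min\bigl\{(p_1^2+p_2^2)^2 + 12t(p_1^2+p_2^2) + 16p_3^2 + 12t^2 - r^4,\; M\bigr\}.
\end{equation*}
By Proposition \ref{prop explicit solution} (shifted by the constant $-r^4$) together with Theorem \ref{thm trans invariant} applied to $\theta(x) = \min\{x, M\}$, $\tilde u$ is a continuous axisymmetric viscosity solution of (MCF). The bracketed expression attains its minimum $12t^2 - r^4$ at $p = 0$, so $\tilde u(p, t) > 0$ for every $p\in\mathcal{H}$ as soon as $t > r^2/\sqrt{12}$.

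Next, I use Corollary \ref{cor independence} to choose a convenient defining function for $\Gamma_0$. Since $\Gamma_0 \subset B_r$ is compact and axisymmetric about the vertical axis, the function
\begin{equation*}
u_0(p) = \min\bigl\{\max\{|p|^4 - r^4,\; \dist(p, \Gamma_0)^4\},\; M\bigr\}
\end{equation*}
is continuous, axisymmetric (both the Korányi gauge $|\cdot|$ and the Korányi distance to an axisymmetric set are invariant under rotations about the $p_3$-axis), identically $M$ outside a large compact set, and satisfies $\{u_0 = 0\} = \Gamma_0$. The last property holds because on $\Gamma_0$ the inner quantity equals $\max\{|p|^4 - r^4, 0\} = 0$, whereas off $\Gamma_0$ it is bounded below by $\dist(p, \Gamma_0)^4 > 0$. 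Crucially, by construction
\begin{equation*}
u_0(p) \geq \tilde u(p, 0) \quad \text{for all } p \in \mathcal{H}.
\end{equation*}

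Let $u$ denote the unique axisymmetric viscosity solution of (MCF) with initial datum $u_0$ supplied by Theorem \ref{game approximation}; its zero level set coincides with $\Gamma_t$ by Corollary \ref{cor independence}. Both $u$ and $\tilde u$ are axisymmetric and, on any time strip $\mathcal{H}\times [0, T]$, both equal $M$ outside a common compact set; moreover $\tilde u(\cdot, 0) \leq u(\cdot, 0)$. The comparison principle (Theorem \ref{comparison theorem}) therefore yields $\tilde u \leq u$ on $\mathcal{H}\times [0, T]$. For any $t > r^2/\sqrt{12}$ this forces $u(\cdot, t) \geq \tilde u(\cdot, t) > 0$ pointwise, whence $\Gamma_t = \{u(\cdot, t) = 0\} = \emptyset$. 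The only delicate part of the argument is the design of $u_0$: it must simultaneously dominate $\tilde u(\cdot, 0)$ pointwise and retain $\Gamma_0$ as its exact zero set while staying axisymmetric and constant at infinity; the $\max$ with $\dist(p, \Gamma_0)^4$ lifts $u_0$ off zero away from $\Gamma_0$ without destroying the pointwise inequality $u_0 \geq |p|^4 - r^4$.
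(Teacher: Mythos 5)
Your argument is correct and follows essentially the same route as the paper: build the truncated explicit solution $\tilde u(p,t)=\min\{w(p,t)-r^4, M\}$ via Proposition \ref{prop explicit solution} and Theorem \ref{thm trans invariant}, bound $u_0$ from below by $\tilde u(\cdot,0)$, apply Theorem \ref{comparison theorem}, and invoke Corollary \ref{cor independence} to pass from a particular defining function to the level-set evolution. The only place you go beyond the paper's text is that the paper merely asserts the existence of an axisymmetric $u_0$ with $\{u_0=0\}=\Gamma_0$, constant at infinity, and $u_0\ge\min\{|p|^4-r^4,C\}$, whereas you exhibit one explicitly (and your choice $u_0=\min\{\max\{|p|^4-r^4,\dist(p,\Gamma_0)^4\},M\}$ does have all the required properties, since both the Kor\'anyi gauge and the Kor\'anyi distance to an axisymmetric set are invariant under rotations about the $p_3$-axis).
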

\begin{proof}
We may take an axisymmetric $u_0\in C(\mathcal{H})$ with a constant value $C>0$ outside $B_r$ satisfying (\ref{initial surface}) and 
\[
u_0\geq \min\{|p|^4-r^4, C\}.
\]
Taking 
\[
w(p, t)=(p_1^2+p_2^2)^2+12 t(p_1^2+p_2^2)+16p_3^2+12t^2
\]
as in (\ref{explicit solution}),
we easily see that $w^C(p, t):=\min\{w(p, t)-r^4, C\}$ is a solution of (\ref{mcf1}) with initial data $w^C(p, 0)=\min\{|p|^4-r^4, C\}$, by Theorem \ref{thm trans invariant} with $\theta(x)=\min\{x, C\}$. We are therefore led to $u\geq w-r^4$ by Theorem \ref{comparison theorem}. 

It is clear that $w^C(p, t)>0$ when $t>r^2/\sqrt{12}$ for all $p\in \mathcal{H}$, which implies that $u>0$ when $t>r^2/\sqrt{12}$. Hence $\Gamma_t$ defined in (\ref{surface evolution}) is empty when $t>r^2/\sqrt{12}$. Note that the conclusion does not depend on the particular choice of $u_0$, as explained in Corollary \ref{cor independence}.
\end{proof}

\begin{rmk}
Theorem \ref{thm extinction} indicates that a bounded axisymmetric mean curvature flow encounters singularities at a certain time $T>0$.
\end{rmk}

\begin{rmk}
The following result stronger than Theorem \ref{thm extinction} holds: For any continuous solution $u$ of (MCF) with zero level set $\Gamma_t$ for any $t\geq 0$, if $\Gamma_0\subset B_r$ with some $r>0$, then $\Gamma_t=\emptyset$ when $t>r^2/\sqrt{12}$. Here we do not need to assume the axial symmetry of $\Gamma_0$ but we must specify the solution $u$ since it is not known in general whether or not $\Gamma_t$ depends on the choice of $u_0$. 
\end{rmk}

\begin{defi}
We say $T\geq 0$ is the \textit{extinction time} of the mean curvature flow $\Gamma_t$ in the Heisenberg group, if $\Gamma_t\neq\emptyset$ when $t\leq T$ and $\Gamma_t=\emptyset$ when $t>T$.
\end{defi}

We next proceed to investigate the asymptotic profile after normalization for a sphere in the Heisenberg group. It is well-known that in the Euclidean space any normalized compact convex surface converges to a sphere as $t$ tends to the extinction time \cite{H}.  However, the normalized curvature flow from a sphere of radius $r$ in the Heisenberg group looks like an ellipsoid 
\begin{equation}\label{ellipsoid}
E_T:=\{P\in \mathcal{H}: 12T(P_1^2+P_2^2)+16P_3^2=1\} 
\end{equation}
at the extinction time $T=r^2/\sqrt{12}$.

\begin{prop}
Suppose that $\Gamma_t\subset \mathcal{H}$ ($t\geq 0$) is the horizontal mean curvature flow as defined in (\ref{surface evolution}) with $\Gamma_0=\{p\in \mathcal{H}: |p|=r\}$, where $r>0$ is a given radius. Then the extinction time $T=r^2/\sqrt{12}$ and the normalized flow $\Gamma_t/\sqrt{r^4-12t^2}\to E_T$ as $t\to T$, where $E_T$ is given in (\ref{ellipsoid}).
\end{prop}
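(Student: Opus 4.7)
The plan is to use the explicit solution from Proposition~\ref{prop explicit solution} to write $\Gamma_t$ down completely. Take the initial level-set function
\[
u_0(p)=\min\{G(p)-r^4,\,M\}
\]
for some $M>0$, which is continuous, axisymmetric, constant outside a compact set, and vanishes exactly on $\Gamma_0$. By Remark~\ref{rmk tran free} (with $\hat p=0$, $L=1$, $c=-r^4$), the shifted explicit solution $w(p,t)-r^4$ solves (\ref{mcf1}) with initial datum $G(p)-r^4$. Applying Theorem~\ref{thm trans invariant} with the continuous nondecreasing $\theta(x)=\min\{x,M\}$ then shows that $u(p,t):=\min\{w(p,t)-r^4,M\}$ is an axisymmetric solution of (MCF) with initial datum $u_0$; it is the unique such solution by Theorem~\ref{comparison theorem}, and Corollary~\ref{cor independence} guarantees the resulting evolution $\Gamma_t$ is independent of the choice of $u_0$.

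From this explicit $u$ I would read off both $\Gamma_t$ and the extinction time. Since $\min_{p\in\mathcal{H}}w(p,t)=12t^2$ is attained at the origin, the sublevel set $\{w(\cdot,t)\le r^4\}$ is nonempty precisely when $t\le T:=r^2/\sqrt{12}$. For $M$ fixed large (so that the truncation never interferes with the zero set on $[0,T]$), we obtain
\[
\Gamma_t=\bigl\{p\in\mathcal{H}:(p_1^2+p_2^2)^2+12t(p_1^2+p_2^2)+16p_3^2=r^4-12t^2\bigr\}
\]
for $0\le t\le T$, and $\Gamma_t=\emptyset$ for $t>T$. This confirms that the extinction time is exactly $T=r^2/\sqrt{12}$.

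For the asymptotic profile I would rescale in Euclidean coordinates. Let $\sigma(t):=\sqrt{r^4-12t^2}>0$ for $t<T$ and write $P=p/\sigma(t)$. Substituting $p=\sigma(t)P$ into the defining equation of $\Gamma_t$ above and dividing by $\sigma(t)^2$ yields
\[
\sigma(t)^2(P_1^2+P_2^2)^2+12t(P_1^2+P_2^2)+16P_3^2=1,
\]
so $P\in\Gamma_t/\sigma(t)$ iff $P$ solves this equation. As $t\to T$, $\sigma(t)\to 0$ while $12t\to 12T=\sqrt{12}\,r^2$, and the defining polynomials $F_t$ on the left (minus $1$) converge locally uniformly to
\[
F_T(P):=\sqrt{12}\,r^2(P_1^2+P_2^2)+16P_3^2-1,
\]
whose zero set is exactly $E_T$.

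To upgrade this into Hausdorff convergence of the zero sets, I would use that the family $\{F_t\}$ is uniformly coercive for $t$ close to $T$, so all zero sets $\Gamma_t/\sigma(t)$ lie in a fixed compact ball, and that $\nabla F_T$ is nonvanishing on the compact ellipsoid $E_T$. A standard implicit-function-theorem argument combined with the locally uniform convergence $F_t\to F_T$ then gives $\Gamma_t/\sigma(t)\to E_T$ in Hausdorff distance. The main subtlety I anticipate is purely bookkeeping—ensuring the truncation constant $M$ can be fixed uniformly on the whole interval $[0,T]$ so that $\Gamma_t$ is genuinely captured by $\{w(\cdot,t)=r^4\}$—and this is immediate since $w$ grows like $|p|^4$ in the spatial variable uniformly on $[0,T]$.
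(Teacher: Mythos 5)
Your proof is correct and follows the same backbone as the paper's: you use the explicit solution $w$ from Proposition~\ref{prop explicit solution}, truncate with $\theta(x)=\min\{x,M\}$ via Theorem~\ref{thm trans invariant}, read off the extinction time from $\min_p w(p,t)=12t^2$, rescale by $\sigma(t)=\sqrt{r^4-12t^2}$, and pass to the limit in the rescaled defining polynomial. The paper and you diverge only in the last step, the Hausdorff convergence of $\Gamma_t/\sigma(t)$ to $E_T$. The paper does this in two explicit halves: it shows any accumulation point of the normalized level sets satisfies $12T(P_1^2+P_2^2)+16P_3^2=1$ by taking a subsequential limit in the identity $U(P(t),t)=0$ under the a priori bound $12t(P_1^2+P_2^2)+16P_3^2\le 1$; and conversely, for each $P\in E_T$, it produces a scaling $\lambda(t)>0$ with $\lambda(t)P\in\Gamma_t/\sigma(t)$ and $\lambda(t)\to 1$, by solving $W(\lambda,P,t)=0$ explicitly along the ray through $P$. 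You instead invoke uniform coercivity of $F_t$ (to keep the rescaled level sets in a fixed compact set) and the nondegeneracy $\nabla F_T\ne 0$ on $E_T$ together with the implicit function theorem. Both are elementary and valid; the paper's radial-scaling device is more computationally explicit and self-contained, while your IFT argument is more generic and would transfer to other one-parameter families of defining functions with the same coercivity and nondegeneracy, at the cost of requiring the (easily verified here, but worth stating) $C^1$ convergence $F_t\to F_T$ on compacts, not merely uniform $C^0$ convergence.
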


\begin{proof}
We take
\[
w^C(p, t)=\min\{(p_1^2+p_2^2)^2+12 t(p_1^2+p_2^2)+16p_3^2+12t^2-r^4, C\}
\]
with $C>0$. It is easily seen that $w^C(p, 0)=0$ if and only if $p\in \Gamma_0$. We have also shown that $w^C$ is a solution of (\ref{mcf1}). We track the evolution by setting $\Gamma_t=\{p\in \mathcal{H}: w^C(p, t)=0\}$ for all $t\geq 0$. It is clear that $\Gamma_t=\emptyset$ when $t>r^2/\sqrt{12}$ and $\Gamma_t\neq \emptyset$ when $t\leq r^2/\sqrt{12}$. 
For any $p(t)=(p_1(t), p_2(t), p_3(t))\in \Gamma_t$, we have
\begin{equation}\label{level set estimate}
12t(p_1^2(t)+p_2^2(t))+16p_3^2(t)\leq r^4-12t^2.
\end{equation}
We normalize the flow by letting $P(t)=p(t)/\sqrt{r^4-12t^2}$ for any $p(t)\in \Gamma_t$. Then (\ref{level set estimate}) is written as
\begin{equation}\label{level set estimate2}
12t(P_1^2(t)+P_2^2(t))+16P_3^2(t)\leq 1.
\end{equation}
By setting 
\[
U(P, t)=(P_1^2+P_2^2)^2(r^4-12t^2)+12t(P_1^2+P_2^2)+16P_3^2-1
\]
we get
\[
0=\frac{1}{r^4-12t^2}w^C(\sqrt{r^4-12t^2}P(t), t)=U(P(t), t)).
\]
Sending the limit as $t\to T$ with (\ref{level set estimate2}) taken into account, we obtain
\[
12 T(P_1^2(T)+P_2^2(T))+16P_3^2(T)= 1
\]
for the limit $P(T)$ of any subsequence of $P(t)$ as $t\to \infty$. 
The consequence above amounts to saying that the limit of the set $\Gamma_t/\sqrt{r^4-12t^2}$ is contained in $E_T$.

On the other hand, for any $P=(P_1, P_2, P_3)\in E_T$, we have 
\[
w^C(\sqrt{r^4-12t^2}\lambda P)=(r^4-12t^2)W(\lambda, P, t),
\]
where $\lambda>0$ and
\[
W(\lambda, P, t)=(\lambda^4(P_1^2+P_2^2)^2(r^4-12t^2)+\lambda^2-1+12\lambda^2(t-T)(P_1^2+P_2^2).
\]
One may take $\lambda(t)>0$ such that $w^C(\sqrt{r^4-12t^2}\lambda(t)P, t)=0$; in other words, $\lambda(t)P\in \Gamma_t/\sqrt{r^4-12t^2}$. Moreover, $\lambda(t)\to 1$ as $t\to T$, which implies that $P$ belongs to the limit of a sequence of elements in $\Gamma_t/\sqrt{r^4-12t^2}$.

In conclusion, we obtain $\Gamma_t/\sqrt{r^4-12t^2}\to E_T$ as $t\to T$. 
\end{proof}

We stress that this result is very different from that in the Euclidean space. The normalized asymptotic shape of horizontal mean curvature flow in the Heisenberg group starting from a ball is an ellipsoid. Moreover, the shape of the ellipsoid depends on the extinction time $T$ and therefore the size of the initial surface. It would be interesting to show this result for a general compact and convex initial surface.


\begin{thebibliography}{ABCD}
\bibitem[AAG]{AAG} S. J. Altschuler, S. B. Angenent and Y. Giga, \textit{Mean curvature flow through singularities
for surfaces of rotation}, J. Geom. Analysis, 5 (1995), 293--358.
\bibitem[AFM]{AFM} N. Arcozzi, F. Ferrari and F. Montefalcone, \textit{CC-distance and metric normal of smooth hypersurfaces in sub-Riemannian Carnot groups}, arXiv:0910.5648v1.
\bibitem[B]{B} T. Bieske, \textit{On $\infty$-harmonic functions on the Heisenberg group}, Comm. Partial Differential Equations 27 (2002),  727--761. 
\bibitem[CC]{CC} L. Capogna and G. Citti, \textit{Generalized mean curvature flow in Carnot groups},  Comm. Partial Differential Equations 34 (2009), 937--956.
\bibitem[CDPT]{CDPT} L. Capogna, D. Danielli, S. Pauls and J. Tyson,
\textit{An introduction to the Heisenberg group and the sub-Riemannian isoperimetric problem}, 
Progress in Mathematics 259, BirkhŠuser Verlag, Basel, 2007.
\bibitem[CGG]{CGG} Y.-G. Chen, Y. Giga and S. Goto, \textit{Uniqueness and existence of viscosity
solutions of generalized mean curvature flow equations}, J. Differential Geom., 33 (1991), 749--786.
\bibitem[CSTV]{CSTV} P. Cheridito, H. M. Soner, N. Touzi and N. Victoir, \textit{Second-order backward stochastic
differential equations and fully nonlinear parabolic PDEs}, Comm. Pure Appl. Math., 60 (2007), 1081--1110.
\bibitem[CIL]{CIL} M. Crandall, H. Ishii and P. L. Lions, \textit{User's guide to viscosity solutions of second
order partial differential equations}, Bull. Amer. Math. Soc., 27 (1992), 1--67.
\bibitem[DDR]{DDR} N. Dirr, F. Dragoni and M. von Renesse, \textit{Evolution by mean curvature flow in sub-Riemannian geometries: a stochastic approach}, Commun. Pure Appl. Anal., 9 (2010), 307--326. 
\bibitem[ES]{ES} L. C. Evans and J. Spruck, \textit{Motion of level sets by mean curvature. I},
J. Differential Geom., 33 (1991), 635--681.
\bibitem[FLM1]{FLM} F. Ferrari, Q. Liu and J. J. Manfredi, \textit{On the characterization of p-harmonic functions on the Heisenberg group by mean value properties}, preprint (2012).
\bibitem[FLM2]{FLM2} F. Ferrari, Q. Liu and J. J. Manfredi, \textit{Some properties of the mean curvature of axisymmetric surfaces in the Heisenberg group}, preprint (2012).
\bibitem[G]{G1} Y. Giga, \textit{Surface evolution equations, a level set
approach}, Monographs in Mathematics 99, Birkh\"{a}user Verlag,
Basel, 2006.
\bibitem[GL]{GL} Y. Giga and Q. Liu, \textit{A remark on the discrete deterministic game approach for curvature flow equations}, Nonlinear phenomena with energy dissipation: Mathematical analysis, modeling and simulation, 29 (2008), 103--115.
\bibitem[H]{H} G. Huisken, \textit{Flow by mean curvature of convex surfaces into spheres},
J. Differential Geom., 20 (1984), 237--266.
\bibitem[I]{I} H. Ishii, \textit{Perron's method for Hamilton-Jacobi equations}, Duke Math. J. 55 (1987) 369--384.
\bibitem[KS1]{KS1} R. V. Kohn and S. Serfaty, \textit{A
deterministic-control-based approach to motion by curvature}, Comm.
Pure Appl. Math., 59 (2006), 344--407.
\bibitem[KS2]{KS2} R. V. Kohn and S. Serfaty, \textit{A deterministic-control-based approach
to fully nonlinear parabolic and elliptic equations}, Comm. Pure Appl. Math. 63 (2010), 1298--1350.
\bibitem[L]{L1} Q. Liu, \textit{Fattening and comparison principle for level-set equations of mean curvature type}, 
SIAM J. Control Optim., 49 (2011), 2518--2541.
\bibitem[M]{M1} J. J. Manfredi, \textit{Analysis and geometry in metric Spaces}, Note for the course nonlinear subelliptic equations on Carnot groups, 2003.
\bibitem[MPR1]{MPR1} J. J. Manfredi, M. Parviainen and J. D. Rossi, \textit{An asymptotic mean value characterization for p-harmonic functions}, Proc. Amer. Math. Soc., 138 (2010), 881--889.
\bibitem[MPR2]{MPR2} J. J. Manfredi, M. Parviainen and J. D. Rossi,  \textit{An asymptotic mean value characterization for a class of nonlinear parabolic equations related to tug-of-war games}, SIAM J. Math. Anal., 42 (2010), 2058--2081.
\bibitem[Mo]{Montgomery} R. Montgomery, \textit{A tour of Subriemannian Geometries, Their Geodesics and Applications  }, Mathematical Surveys an Monographs, 91, AMS (2002). 
\bibitem[PSSW]{PSSW} Y. Peres, O. Schramm, S. Sheffield and D. Wilson, \textit{Tug-of-war and the
infinity Laplacian}, J. Amer. Math. Soc., 22 (2009), 167--210.
\bibitem[PS]{PS} Y. Peres and S. Sheffield, \textit{Tug of war with noise: a game theoretic view of
the p-Laplacian}, Duke Math. J., 145 (2008), 91--120.
\bibitem[S]{S} H. M. Soner, \textit{Motion of a set by the curvature of its boundary}, J. Differential Equations, 101 (1993), 313--372
\bibitem[SS]{SS} H. M. Soner and P. E. Souganidis, \textit{Singularities and uniqueness of cylindrically
symmetric surfaces moving by mean curvature}, Comm. Partial Differential Equations, 18 (1993), 859--894.
\bibitem[ST]{ST} H. M. Soner and N. Touzi, \textit{A stochastic representation for mean curvature type
geometric flows}, Ann. Probab., 31 (2003), 1145--1165.
\bibitem[W]{W} C. Wang, \textit{Viscosity convex functions on Carnot groups}, Proc. Amer. Math. Soc., 133 (2005), 1247--1253.
\end{thebibliography}
\end{document}